\definecolor{bblue}{rgb}{.2,0.2,.8}
\theoremstyle{plain}
\newtheorem{theorem}{Theorem}[section]
\newtheorem{proposition}[theorem]{Proposition}
\newtheorem{lemma}[theorem]{Lemma}
\newtheorem{corollary}[theorem]{Corollary}
\theoremstyle{definition}
\newtheorem{definition}[theorem]{Definition}
\theoremstyle{remark}
\newtheorem{remark}[theorem]{Remark}
\numberwithin{equation}{section}
\numberwithin{theorem}{section}
\newcommand*{\defeq}{\mathrel{\vcenter{\baselineskip0.5ex \lineskiplimit0pt
			\hbox{\scriptsize.}\hbox{\scriptsize.}}}%
	=}
    \let\phi=\varphi
\renewcommand{\epsilon}{\varepsilon}
\renewcommand{\phi}{\varphi}
\renewcommand{\hat}{\widehat}
\newcommand{\<}{\langle}
\newcommand{\Set}[1]{\left\{#1\right\}}
\renewcommand{\>}{\rangle}
\DeclareMathOperator{\Fun}{Fun}
\DeclareMathOperator{\AGL}{AGL}
\DeclareMathOperator{\Sym}{Sym}
\DeclareMathOperator{\F}{\mathbb{F}}
\DeclareMathOperator{\Z}{\mathbb{Z}}
\DeclareMathOperator{\pdeg}{pdeg}
\DeclareMathOperator{\Lie}{\mathfrak{L}}
\DeclareMathOperator{\lt}{\mathrm{lt}}
\title{Normality conditions in the Sylow \(\boldsymbol{p}\)-subgroup of \(\boldsymbol{\Sym(p^n)}\) and its associated Lie algebra}
\author{Riccardo Aragona}
\address{\noindent Riccardo Aragona \hfill\break\indent 
	DISIM, Universit\`a dell'Aquila
	\hfill\break\indent 
	67100 Coppito, L'Aquila, Italy
}
\email{riccardo.aragona@univaq.it}
\author{Norberto Gavioli}
\address{\noindent Norberto Gavioli \hfill\break\indent 
	DISIM, Universit\`a dell'Aquila
	\hfill\break\indent 
	67100 Coppito, L'Aquila, Italy
}
\email{norberto.gavioli@univaq.it}
\author{Giuseppe Nozzi}
\address{\noindent Giuseppe Nozzi \hfill\break\indent 
	DISIM, Universit\`a dell'Aquila
	\hfill\break\indent 
	67100 Coppito, L'Aquila, Italy
}
\email{giuseppe.nozzi@graduate.univaq.it}
\begin{document}
\subjclass[2010]{20E22; 20B35;  20F14; 20E15; 20D20; 17B60; 05A17} \keywords{Wreath
  product;  Sylow \(p\)-subgroups of \(\Sym(p^n)\), Normal  subgroups; Normalizer  chain; Lower and Upper central series; Integer partitions; Lie rings}
	
\thanks{All the authors are members of INdAM-GNSAGA (Italy); the first
  and  the second  authors are  members  of the  center of  excellence
  ExEmerge of the University of L'Aquila. The authors thankfully acknowledge support by MUR-Italy via PRIN 2022RFAZCJ "Algebraic methods in Cryptanalysis".}
\begin{abstract}
 In this work, we give a description of the structure of the normal subgroups of a Sylow \(p\)-subgroup \(W_n\) of \(\Sym(p^n)\), showing that they contain a term from the lower central series with bounded index. To this end,  we explicitly determine the terms of the upper and the lower central series of \(W_n\). We provide a similar description of these series in the  Lie algebra associated to \(W_n\), giving a new proof of the equality of their terms in both the group and algebra contexts. Finally, we calculate the growth of the normalizer chain starting from an elementary abelian regular subgroup of \(W_n\). 
 \end{abstract}
	
\noindent

\maketitle
\thispagestyle{empty}
\section{Introduction}
This work studies normality conditions in a Sylow \(p\)-subgroup \(W_n\) of the symmetric group \(\Sym(p^n)\) and relations with the ideals of the Lie algebra associated to the lower central series of this group. 

This article aims also to be the conclusion of a series of papers focused on the study  of a sequence of normalizers arising from an abelian regular subgroups of the Sylow \(p\)-subgroup of \(\Sym(p^n)\). More specifically, let \(T\) be a vector space of dimension \(n\) over \(\mathbb{F}_p\). This group acts regularly on itself, so it can be seen as an elementary abelian regular subgroup of \(\Sym(p^n)\) via the Cayley embedding. In~\cite{regular}, the authors prove that any other elementary abelian regular subgroup of \(\AGL(2^n)\) intersecting $T$ in a second-maximal subgroup is conjugated to \(T\) in \(N_{W_{n}}(T)\). The latter normalizer  is also equal to \(N_{\Sym(2^n)}(T)\) in the sole case \(p=2\). In~\cite{chain,rigid,unref}, a chain of normalizers originating from the subgroup \(T\) is defined as follows
	\begin{equation*}
	N_i=\begin{cases}
		T&\text{ for }i=-1\\
		N_{W_n}(N_{i-1})&\text{ for }i\ge 0
	\end{cases}
\end{equation*}
In the case \(p=2\), the authors give the notion of rigid commutators in order to study the growth of this chain.
This approach enables them to show that the relative indices of consecutive terms of this chain are related to the number of unrefinable partitions into at least two distinct parts. More precisely, they prove that \(\log_2(|N_i:N_{i-1}|)=q_{2,i+2}\), where  \(\{q_{2,i}\}_{i\ge 1}\) is the sequence of partial sums of the sequence  of the partitions into at least two distinct parts of \(i\).
A \((\Z/m\Z)\)-Lie ring counterpart of these results is provided in~\cite{modular}, where the  Lie ring \(\Lie_m(n)\) of partitions with maximal part at most \(n-1\) is constructed together with a chain of idealizer \(\{\mathfrak{N}_i\}_{i\ge -1}\) mimicking the aforementioned normalizer chain. When \(m=2\) the idealizer chain growth is shown to be the same as in the group context.
When \(m=p\) is an odd prime the rank of \(\mathfrak{N}_i/\mathfrak{N}_{i-1}\) is shown to be equal to \(q_{m,i+1}\), i.e., the \((i+1)\)-th term of the sequence of partial sums of the sequence  of the partitions into at least two distinct parts of \(i\), where each part can be repeated at most \(m-1\) times. The authors of the cited paper conjecture that a similar result holds for the normalizer chain in the Sylow \(p\)-subgroup of \(\Sym(p^n)\), without being able to prove it, as they face difficulties in defining a suitable notion of rigid commutators for odd primes. In the present work,  we find an alternative way, with respect to the rigid commutators used in characteristic 2, of describing the elements of the Sylow \(p\)-subgroup and we successfully prove the above conjecture.  

Let \(p>2\) be a prime and \(W_n\) be the Sylow \(p\)-subgroup of \(\Sym(p^n)\). This group can be seen as the iterated wreath product \(W_n=\wr_{i=1}^n \F_p\) (see~\cite{kaloujnine2}). The techniques involved in the present paper make a large use of the central series of the group \(W_n\). This topic is  widely studied  in literature (see e.g.~\cite{meldrum,kaloujnine,polnum,neumann,commutatorcalculus,sushchanskii}). In particular, Kaloujnine in~\cite{kaloujnine} proves that the upper and the lower central series of a Sylow \(p\)-subgroup of \(\Sym(p^n)\) coincide. Sushchanskyy in~\cite{sushchanskii}  shows that the same result holds also for wreath products of elementary abelian groups. 

\smallskip

We now give a brief outline of the paper. After describing in Section~\ref{sec:prel} some useful preliminaries, in Section~\ref{sec:lowup} we compute the upper and the lower central series of \(W_n\) and we provide a proof, alternative to the Kaloujnine's one, that these two series coincide. 


In Section~\ref{sec:normal}, we look at the normal subgroups \(N\) of \(W_n\). Specifically, we prove that if \(N\) is contained in the last \(n-k\) base subgroups of \(W_n\), then it contains a term of the lower central series with bounded index depending only on \(k\).

In Section~\ref{sec:lie}, we introduce the graded Lie algebra \(\Lie_n\) associated to the lower central series of \(W_n\). In~\cite{netreba}, the authors characterize this Lie algebra as the iterated wreath product \(\Lie_n=\wr^n \Lie_1\), where \(\Lie_1\) is the one-dimensional algebra over \(\mathbb{F}_p\). We introduce a map \(\phi\colon W_n\to \Lie_n\)  establishing a relation between the group and the algebra and which intertwines  central series and a special class of normal subgroups with homogeneous ideals. In particular, also the upper and lower central series of \(\Lie_n\) coincide. Finally, in Section~\ref{sec:chain}, using the map \(\phi\), we relate the normalizer chain originating from the canonical regular elementary abelian subgroup of \(W_n\) and the idealizer chain introduced in~\cite{modular}, proving that they exhibit the same growth.

\section{Preliminaries}\label{sec:prel}
In this section, we introduce some notations and definitions that will be used throughout the paper.
\subsection{Wreath product}
Let \(W_n=\wr^n \F_p\), where \(p\) is an odd prime integer. Notice that \(W_n=\Fun(\F_p^{n-1}, \F_p) \rtimes W_{n-1}\) and that the group of functions \(\Fun(\F_p^{n-1}, \F_p)\) can be identified with the additive group of the polynomials in \(n-1\) variables in which every variable appears with degree at most \(p-1\).  More precisely, the \(k\)-th base subgroup of \(W_n\) is defined as
  \[B_k\defeq\Fun(\F_p^{k-1}, \F_p) \cong \F_{p}[x_1,\dots, x_{k}]/(x_1^p-x_1, \dots, x_{k-1}^p-x_{k-1})\]
and \(W_n= \rtimes_{k=0}^{n-1}B_{n-k}, \). We will also denote a polynomial \(f\in B_k\) as \(f\Delta_k\) to avoid confusion, since the same polynomial may belong to different base subgroups. In particular, every element of \(w\in W_n\) can be uniquely written as a product of the form \(w= \prod_{k=0}^{n-1} f_{n-k}\Delta_{n-k}\), where \(f_i\in \Fun(\F_p^{i-1}, \F_p)\).

 Let $i$ and $k$ be integers such that \(i<k\). Let \(x=(x_1,\dots,x_{k-1})\)  and \(e_i\)  be the \(i\)-th  vector of  the canonical
basis  of  \(\F_p^{k-1}\).   For  each  $h\Delta_i\in  B_i$,  we  define  
$$ \Delta_i(h)(f  (x)\Delta_k)= (f(x+he_i)-f(x))\Delta_k.$$  
The operator $\Delta$  can be
used to express   the conjugation action of an element \(f_i\Delta_i\in B_i\) on an element \(f_k\Delta_k \in B_k\)  by way of the commutator \[
[f_k\Delta_k,{f_i\Delta_i}]= \Delta_i(f_i)\bigl( f_k\Delta_k \bigr) .
\] 

Since the functions in the base subgroups are polynomials in which  every variable appears with degree at most \(p-1\), we can use Taylor formula to write the above commutator as follows
\begin{equation}\label{taylor}
[f_k\Delta_k,{f_i\Delta_i}]= \sum_{j=1}^{p-1}\dfrac{1}{j!}\dfrac{\partial^j f_k}{ \partial x_i^j}{f_i^j}\Delta_k.
\end{equation}

 Kaloujnine in \cite{kaloujnine} proved that \(W_n\) is the Sylow \(p\)-subgroup of \(\Sym(p^n)\). The element \(f_i\Delta_i\in B_i\) acts on \((x_1,\dots, x_n)\in \F_p^{n}\) via the translation  
%
\[
	(x_1,\dots, x_n) \to (x_1,\dots, x_n) - e_i  f_i(x_1,\dots,x_{i-1})
\] 
In this action, the subgroup \(T=\< \Delta_1,\dots, \Delta_n \> \le W_n\) acts regularly and is called \emph{the canonical elementary abelian regular subgroup of \(W_n\)}. 

\subsection{Power monomials}
Let $\Lambda=\{\lambda_i\}_{i=1}^\infty$  be  a sequence  of  non-negative
integers with finite support and weight
\begin{equation}\label{defwt}
	\mathrm{wt}(\Lambda)\defeq \sum_{i=1}^\infty i\lambda_i< \infty.
\end{equation}
We   shall   say  that   $\Lambda$   is   a   partition  of   $N$   if
$\mathrm{wt}(\Lambda)=N$. The maximal part of $\Lambda$ is the integer
$\max\{i\mid  \lambda_i\neq 0\}$.   The  set of  the partitions  whose
maximal  part   is  less  than   or  equal   to  $k$  is   denoted  by
$\mathcal{P}(k)$ and we define for each $m\ge 1$
\begin{equation*}
	\mathcal{P}_m(k)=\Set{\Lambda\in \mathcal{P}(k)\mid \lambda_i\le m-1\text{ for\ all } i}.
\end{equation*} 
Let $\Lambda\in \mathcal{P}(k)$. We define the \emph{power monomial}  $x^\Lambda$ by
\begin{equation*}
	x^\Lambda=\prod_{i=1}^\infty x_i^{\lambda_i}.
\end{equation*}
Throughout this paper, unless otherwise  stated, we will consider partitions in $ \mathcal{P}_p(k)$ for $k=1,\dots,n$. The set $\mathcal{B}$ consists of all power monomials in \(W_n\), i.e.
\begin{equation}
	\mathcal{B}=\Set{x^\Lambda\Delta_k\mid \Lambda\in \mathcal{P}_p(k)\text{ and }1\le k\le n}.
\end{equation}
\begin{definition}\label{def:pdeg}
	 We define the  \emph{$p$-degree}  of the power monomial $x^\Lambda=x_1^{\lambda_1}\cdots x_{n-1}^{\lambda_{n-1}}$,
	written $\pdeg(x^\Lambda)$, by
	\begin{equation*}
		\pdeg(x^\Lambda)=\lambda_{n-1}p^{n-2}+\dots+\lambda_2p+\lambda_1.
	\end{equation*}
Moreover, let  $1\le j\le n$, we set \(\mu_j=p^{n-1}-p^{j-1}\) and we define
\begin{equation}
	\pdeg(x^\Lambda\Delta_j)=\pdeg(x^\Lambda)+\mu_j.
\end{equation}
 Let $f\Delta_k=(c_1x^{\Lambda_1}+\dots+c_tx^{\Lambda_t})\Delta_k\in B_k$ be an homogeneous element with $c_i\in \F_p$. We define \(\pdeg(f\Delta_k)\) as the \(\max\Set{\pdeg(x^\Lambda_i\Delta_k)\mid i=1,\dots,t }\) and refer to the \emph{leading term} $\lt(f\Delta_k)$ of $f\Delta_k$ as the element of \(W_n\) which realizes that maximum.
\end{definition}
Notice that if $\pdeg(x^\Lambda\Delta_k)<\mu_k$, then $x^\Lambda\Delta_k=1$.
\medskip

The following definition is already given in~\cite{wreath0}.
\begin{definition}\label{def:saturated}
	A subgroup \(S\le W_n\)  is said to be saturated if \begin{enumerate}
		\item \(S=S_1\cdots S_n\), where \(S_i\le B_i\),
		\item if \(f\Delta_k\in S\), then for each monomial \(cx^\Lambda\) of \(f\), with \(c\in \F_p^*\), the element \(x^\Lambda\Delta_k\) is in \(S\).
	\end{enumerate}
\end{definition}
Notice that a saturated subgroup $S$ of $W_n$ is spanned by the set $S\cap \mathcal{B}$.

\section{The Lower and the upper Central Series of $W_n$}\label{sec:lowup}
In this section we compute the lower and the upper central series of $W_n$ and we give a proof of the equality between the terms of the two series.

 Let us consider $x^\Lambda\Delta_k\in B_k$ and $x^\Theta\Delta_\ell\in B_\ell$ with $k>\ell$. By \cite[Lemma~2.8]{wreath0}, it is easy to see   that

	\begin{equation*}
		\pdeg([x^\Lambda\Delta_k,x^\Theta\Delta_\ell])=\pdeg\left(\dfrac{\partial x^\Lambda}{\partial x_\ell}x^\Theta \Delta_k \right).
	\end{equation*}
	Thus, if $[x^\Lambda\Delta_k,x^\Theta\Delta_\ell]\neq 0$, then 
	\begin{equation}\label{rmk:commutators}
	\pdeg([x^\Lambda\Delta_k,x^\Theta\Delta_\ell])< \pdeg(x^\Lambda\Delta_k).
	\end{equation}
\begin{lemma}\label{lem:pdeg-1}
	Let $x^\Lambda\Delta_k\in B_k$. There exists a monic monomial element $w\in W_n$ such that $[x^\Lambda\Delta_k,w]$ lies in $B_k$ and
	$$\pdeg([x^\Lambda\Delta_k,w])=\pdeg(x^\Lambda\Delta_k)-1.$$
\end{lemma}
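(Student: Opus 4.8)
The plan is to realize the one-unit drop in \(p\)-degree by a single commutator with a carefully chosen monic monomial lying in a \emph{lower} base subgroup, so that the conjugation formula~\eqref{taylor} automatically keeps the result inside \(B_k\). Throughout I assume \(x^\Lambda\neq 1\): otherwise \(x^\Lambda\Delta_k=\Delta_k\) already has the minimal \(p\)-degree \(\mu_k\) attained in \(B_k\), and no element of \(B_k\) has \(p\)-degree \(\mu_k-1\), so the statement is to be read for \(\pdeg(x^\Lambda\Delta_k)>\mu_k\).

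First I would let \(\ell\) be the smallest index with \(\lambda_\ell\neq 0\). Since \(x^\Lambda\) is a polynomial in the variables of \(B_k\), we have \(\ell<k\), and \(\lambda_i=0\) for all \(i<\ell\). I then set
\[
w=x_1^{p-1}x_2^{p-1}\cdots x_{\ell-1}^{p-1}\,\Delta_\ell\in B_\ell
\]
(with \(w=\Delta_1\) when \(\ell=1\)), a monic power monomial \(x^\Theta\Delta_\ell\) with \(\Theta\in\mathcal{P}_p(\ell)\). Because \(\ell<k\), the commutator \([x^\Lambda\Delta_k,w]\) lies in \(B_k\) by~\eqref{taylor}.

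The verification splits into two checks. I first confirm the commutator is nonzero: its leading (\(j=1\)) Taylor term is \(\tfrac{\partial x^\Lambda}{\partial x_\ell}x^\Theta\Delta_k\), and \(\tfrac{\partial x^\Lambda}{\partial x_\ell}=\lambda_\ell x_\ell^{\lambda_\ell-1}\prod_{i>\ell}x_i^{\lambda_i}\) is nonzero since \(1\le\lambda_\ell\le p-1\) forces \(\lambda_\ell\not\equiv 0\pmod p\). Then, invoking the degree formula \(\pdeg([x^\Lambda\Delta_k,x^\Theta\Delta_\ell])=\pdeg\bigl(\tfrac{\partial x^\Lambda}{\partial x_\ell}x^\Theta\Delta_k\bigr)\) recalled just before the lemma, it remains to evaluate the right-hand side. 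Here is the only delicate point: choosing \(\ell\) as the \emph{minimal} support index guarantees that no variable in the product \(\tfrac{\partial x^\Lambda}{\partial x_\ell}x^\Theta\) exceeds exponent \(p-1\) (for \(i<\ell\) the exponent is \(0+(p-1)\), for \(i=\ell\) it is \(\lambda_\ell-1\le p-2\), and for \(i>\ell\) it is \(\lambda_i\)), so no reduction modulo \(x_i^p-x_i\) occurs and the \(p\)-degree is read off directly from the exponents.

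Finally I would carry out the arithmetic: differentiating in \(x_\ell\) lowers the polynomial \(p\)-degree by \(p^{\ell-1}\), while multiplying by \(x^\Theta\) raises it by \(\sum_{i=1}^{\ell-1}(p-1)p^{i-1}=p^{\ell-1}-1\), for a net change of \(-1\); reinstating \(\mu_k\) yields \(\pdeg([x^\Lambda\Delta_k,w])=\pdeg(x^\Lambda\Delta_k)-1\). The main obstacle is exactly the overflow issue in that product, which is resolved by the minimality of \(\ell\); everything else is the forced base-\(p\) bookkeeping, noting that \(p^{\ell-1}-1\) is precisely the integer whose base-\(p\) digits are all equal to \(p-1\), which is what pins down the choice of \(\Theta\).
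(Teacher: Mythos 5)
Your proof is correct and follows essentially the same route as the paper's: you pick the minimal index $\ell$ with $\lambda_\ell\neq 0$ and commute with $w=x_1^{p-1}\cdots x_{\ell-1}^{p-1}\Delta_\ell$ (i.e.\ $\Delta_1$ when $\ell=1$), which is exactly the paper's choice of witness, and the degree bookkeeping matches the paper's computation $(\pdeg(x^\Lambda)-p^{\ell-1})+(p-1)\sum_{i=0}^{\ell-2}p^i+\mu_k=\pdeg(x^\Lambda\Delta_k)-1$. Your additional remarks --- that minimality of $\ell$ prevents exponent overflow modulo $x_i^p-x_i$, that $\lambda_\ell\not\equiv 0\pmod p$ makes the commutator nonzero, and that the constant case $x^\Lambda=1$ must be excluded --- are details the paper leaves implicit, and they strengthen rather than change the argument.
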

\begin{proof}
	Let $\Lambda=(\lambda_1,\dots,\lambda_{k-1})$ and $j=\min\lbrace j\mid \lambda_j\neq 0\rbrace.$ If $x_j=x_1$, then $w=\Delta_1$. Indeed, we have that
	$$\pdeg([x^\Lambda\Delta_k,\Delta_1])=\pdeg\bigl(\dfrac{\partial x^\Lambda}{\partial x_1}\Delta_k\bigr)=\pdeg(x^\Lambda\Delta_k)-1.$$
If $x_j\neq x_1$, then $w=x_1^{p-1}\cdots x_{j-1}^{p-1}\Delta_j$. Indeed
\begin{align*}
\pdeg([x^\Lambda\Delta_k,x_1^{p-1}\cdots x_{j-1}^{p-1}\Delta_j])&=\pdeg\bigl(\dfrac{\partial x^\Lambda}{\partial x_j}x_1^{p-1}\cdots x_{j-1}^{p-1}\Delta_k\bigr)\\
&=(\pdeg(x^\Lambda)-p^{j-1})+(p-1)\sum_{i=0}^{j-2}p^i+\mu_k\\
&=\pdeg(x^\Lambda\Delta_k)-1.\qedhere
\end{align*}
\end{proof}
\begin{corollary}\label{cor:commutators}
	If $x^\Lambda\Delta_k\in B_k$, then there exist monic monomial elements $w_1,\dots,w_\ell\in W_n$ such that the commutator $[x^\Lambda\Delta_k,w_1,\dots,w_\ell]$ lies in $B_k$ and
	\begin{equation}
		\pdeg([x^\Lambda\Delta_k,w_1,\dots,w_\ell])=\pdeg(x^\Lambda\Delta_k)-\ell
	\end{equation}
	where \(1\le \ell\le \pdeg(x^\Lambda)\).
\end{corollary}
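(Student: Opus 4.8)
The plan is to prove the statement by induction on $\ell$, iterating Lemma~\ref{lem:pdeg-1}. The base case $\ell=1$ is exactly Lemma~\ref{lem:pdeg-1}. For the inductive step I would assume that monic monomials $w_1,\dots,w_{\ell-1}$ have already been produced so that $g\defeq[x^\Lambda\Delta_k,w_1,\dots,w_{\ell-1}]$ lies in $B_k$ with $\pdeg(g)=\pdeg(x^\Lambda\Delta_k)-(\ell-1)$, under the standing hypothesis $\ell\le\pdeg(x^\Lambda)$. The idea is to feed the \emph{leading term} of $g$ into Lemma~\ref{lem:pdeg-1}: writing $\lt(g)=c\,x^{\Lambda'}\Delta_k$ with $c\in\F_p^*$, the lemma furnishes a monic monomial $w_\ell$ with $[x^{\Lambda'}\Delta_k,w_\ell]\in B_k$ of $p$-degree $\pdeg(x^{\Lambda'}\Delta_k)-1=\pdeg(x^\Lambda\Delta_k)-\ell$. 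The content of the step is then to check that this drop survives when $w_\ell$ is commuted against all of $g$, not merely against its leading monomial.

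Two observations make this work. First, since every variable occurs to degree at most $p-1$ in a power monomial, the integers $\lambda_{n-1}p^{n-2}+\dots+\lambda_1$ are genuine base-$p$ expansions, so $\pdeg$ is injective on the power monomials of a fixed $B_k$; hence $\lt(g)$ is a single monomial and $c\in\F_p^*$ is well defined. Second, by the Taylor expansion~\eqref{taylor}, the map $f\Delta_k\mapsto[f\Delta_k,w_\ell]$ amounts to applying to $f$ a fixed combination of iterated derivatives, and is therefore $\F_p$-linear in $f$. Consequently $[g,w_\ell]=c\,[x^{\Lambda'}\Delta_k,w_\ell]+[\,g-\lt(g),w_\ell\,]$, where the first summand has $p$-degree exactly $\pdeg(x^\Lambda\Delta_k)-\ell$ (as $c\neq0$). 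Every monomial of $g-\lt(g)$ has $p$-degree strictly below $\pdeg(g)$, hence at most $\pdeg(x^\Lambda\Delta_k)-\ell$; by~\eqref{rmk:commutators} its commutator with $w_\ell$ then has $p$-degree strictly below $\pdeg(x^\Lambda\Delta_k)-\ell$. Thus no cancellation can occur at the top degree, and $\pdeg([g,w_\ell])=\pdeg(x^\Lambda\Delta_k)-\ell$, completing the induction.

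It remains to record two bookkeeping points. Membership $[g,w_\ell]\in B_k$ is automatic, since the monomial $w_\ell$ produced by Lemma~\ref{lem:pdeg-1} lies in some $B_j$ with $j<k$ (as $x^{\Lambda'}$ is a polynomial in $x_1,\dots,x_{k-1}$), and commuting a $B_k$-element with a lower base element returns a $B_k$-element. The hypothesis $\ell\le\pdeg(x^\Lambda)$ guarantees that at each stage the leading monomial is still nonconstant, i.e.\ $\pdeg(x^{\Lambda'})=\pdeg(x^\Lambda)-(\ell-1)\ge1$, which is exactly what Lemma~\ref{lem:pdeg-1} requires to be applicable. I expect the only genuine obstacle to be the no-cancellation step: one must argue cleanly that passing from the single leading monomial to the full polynomial $g$ does not destroy the prescribed degree drop, and this is resolved by combining the $\F_p$-linearity coming from~\eqref{taylor} with the strict inequality~\eqref{rmk:commutators}.
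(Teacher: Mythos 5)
Your proof is correct and follows the route the paper intends: the corollary is stated there without proof, as an immediate iteration of Lemma~\ref{lem:pdeg-1}, and your induction is exactly that iteration. The extra care you take --- applying the lemma to the leading term, using the $\F_p$-linearity of $f\mapsto[f\Delta_k,w]$ coming from~\eqref{taylor}, and invoking the strict drop~\eqref{rmk:commutators} to rule out cancellation at top degree --- is precisely the (unstated) device the paper relies on whenever it applies Lemma~\ref{lem:pdeg-1} to leading terms of polynomial elements, e.g.\ in Lemma~\ref{lem: max in N}.
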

We use the standard notation $\gamma_i(W_n)$ to indicate the $i$-th term of the lower central series of $W_n$.
\begin{lemma}\label{lem:gammasuiB}
	Let $i\ge 1$, then $\gamma_i(W_n)\cap B_k=\langle x^\Lambda\Delta_k\mid \pdeg(x^\Lambda\Delta_k)\le p^{n-1}-i\rangle$.
\end{lemma}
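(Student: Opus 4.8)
The plan is to deduce the statement from the stronger \emph{global} identity $\gamma_i(W_n)=G_i$, where
\[
G_i=\bigl\langle\, x^\Lambda\Delta_m \,\big|\, 1\le m\le n,\ \pdeg(x^\Lambda\Delta_m)\le p^{n-1}-i \,\bigr\rangle .
\]
Being generated by power monomials, each lying in a single base subgroup, $G_i$ will turn out to equal the product $\prod_{m=1}^n R_i^{(m)}$ with $R_i^{(m)}=\langle x^\Lambda\Delta_m\mid \pdeg(x^\Lambda\Delta_m)\le p^{n-1}-i\rangle\le B_m$; this is a saturated subgroup in the sense of Definition~\ref{def:saturated}, so by the uniqueness of the normal form $w=\prod_m f_m\Delta_m$ one gets $G_i\cap B_k=R_i^{(k)}$, which is exactly the right-hand side of the Lemma. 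Observe that $G_1=W_n$, since every power monomial has $p$-degree at most $p^{n-1}-1$ (attained only by $x_1^{p-1}\cdots x_{m-1}^{p-1}\Delta_m$), while $G_{p^{n-1}+1}=1$.

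First I would prove $\gamma_i\subseteq G_i$ by showing that $G_i\trianglelefteq W_n$ and $[G_i,W_n]\subseteq G_{i+1}$; the inclusion then follows by induction from $\gamma_1=W_n=G_1$. As $G_i$ and $W_n$ are generated by power monomials and each $B_k$ is abelian, it suffices to bound $\pdeg[x^\Lambda\Delta_k,x^\Theta\Delta_\ell]$ when $\pdeg(x^\Lambda\Delta_k)\le p^{n-1}-i$. For $k>\ell$, inequality~\eqref{rmk:commutators} gives $\pdeg<\pdeg(x^\Lambda\Delta_k)\le p^{n-1}-i$. For $k<\ell$ one uses $[x^\Lambda\Delta_k,x^\Theta\Delta_\ell]=[x^\Theta\Delta_\ell,x^\Lambda\Delta_k]^{-1}$ and~\eqref{taylor}: the leading term has $p$-degree $\pdeg(x^\Theta\Delta_\ell)-p^{k-1}+\pdeg(x^\Lambda)\le (p^{n-1}-1)-p^{k-1}+(p^{k-1}-i)=p^{n-1}-(i+1)$, where $\pdeg(x^\Lambda)\le p^{k-1}-i$ because $\mu_k=p^{n-1}-p^{k-1}$, and the remaining Taylor terms have strictly smaller $p$-degree; the case $k=\ell$ is trivial. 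Hence every such commutator lies in $G_{i+1}$, yielding both the normality of $G_i$ and $[G_i,W_n]\subseteq G_{i+1}$ (and, incidentally, that $\prod_m R_i^{(m)}$ is closed under commutation, hence a subgroup equal to $G_i$).

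For the reverse inclusion $G_i\subseteq\gamma_i$ I would prove $G_j\subseteq\gamma_jG_{j+1}$ for all $j$ by induction on $j$, and then telescope: since $G_j=\gamma_jG_{j+1}=\gamma_jG_{j+2}=\cdots=\gamma_jG_{p^{n-1}+1}=\gamma_j$, absorbing $\gamma_{j+1}\subseteq\gamma_j$ at each step. The quotient $G_j/G_{j+1}$ is spanned by the power monomials of $p$-degree exactly $s=p^{n-1}-j$, so I must place each such monomial in $\gamma_jG_{j+1}$. Reversing Lemma~\ref{lem:pdeg-1}, every power monomial $x^\Lambda\Delta_k$ of $p$-degree $s$ other than the top monomial $x_1^{p-1}\cdots x_{k-1}^{p-1}\Delta_k$ is (up to a nonzero scalar) the leading term of a commutator $[x^M\Delta_k,w]$, where $w$ is a monic monomial and $x^M\Delta_k\in B_k$ has $p$-degree $s+1$; by~\eqref{taylor} the lower-order part of this commutator has $p$-degree $\le s-1$, hence lies in $G_{j+1}$. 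By the inductive hypothesis applied at $j-1$ we may write $x^M\Delta_k=ch$ with $c\in\gamma_{j-1}$ and $h\in G_j$, so that $[x^M\Delta_k,w]=[c,w]^h[h,w]$, with $[c,w]^h\in\gamma_j$ and $[h,w]\in[G_j,W_n]\subseteq G_{j+1}$. Collecting terms gives $x^\Lambda\Delta_k\in\gamma_jG_{j+1}$, while the top monomial is covered by the base case $j=1$ where $G_1=W_n=\gamma_1$.

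The main obstacle is precisely the strictly lower $p$-degree ``tails'' produced by~\eqref{taylor} together with the fact that the lift $x^M\Delta_k=ch$ is controlled only modulo $G_j$: a naive degree-by-degree induction breaks because these contributions live in $\gamma$-terms that have not yet been constructed. The device that neutralises both is to argue throughout modulo $G_{j+1}$ and to exploit the already-established inclusion $[G_j,W_n]\subseteq G_{j+1}$, under which every uncontrolled term (having $p$-degree $\le s-1$) simply vanishes in $G_j/G_{j+1}$. Combining the two inclusions yields $\gamma_i(W_n)=G_i$, and intersecting with $B_k$ gives $\gamma_i(W_n)\cap B_k=R_i^{(k)}$, as claimed.
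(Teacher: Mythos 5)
Your plan is correct, but it takes a genuinely different route from the paper. The paper argues entirely inside one base subgroup $B_k$: the inclusion $\subseteq$ follows from inequality~\eqref{rmk:commutators}, and $\supseteq$ from Corollary~\ref{cor:commutators}, i.e.\ by iterating the exact degree drop of Lemma~\ref{lem:pdeg-1} starting from the top monomial $x_1^{p-1}\cdots x_{k-1}^{p-1}\Delta_k$; all cross-base-group structure of $\gamma_i(W_n)$ is deferred to the later abstract lemma on $[AB,G]$ and Corollary~\ref{cor:desc_satur}. You instead prove the stronger global identity $\gamma_i(W_n)=G_i$: the inclusion $\gamma_i\subseteq G_i$ via $[G_i,W_n]\subseteq G_{i+1}$ checked on monomial generators (which obliges you to treat the case $k<\ell$ through~\eqref{taylor}, a computation the paper never needs at this point), and $G_i\subseteq\gamma_i$ via $G_j\subseteq\gamma_jG_{j+1}$ and telescoping down to $G_{p^{n-1}+1}=1$. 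Your route is self-contained, it confronts explicitly the lower-degree Taylor tails that the paper's terse proof silently absorbs (the commutators in Lemma~\ref{lem:pdeg-1} are not monomials; only their leading terms have the stated degree), and it delivers Corollary~\ref{cor:desc_satur} and the saturation of $\gamma_i(W_n)$ as byproducts rather than separate results. The price is bookkeeping that a full write-up must make explicit: deducing normality of $G_i$ and $[G_i,W_n]\subseteq G_{i+1}$ from generator commutators requires $G_{i+1}$ to be already known normal, so your first step must be organized as a downward induction on $i$ starting from $G_{p^{n-1}+1}=1$; the identification $G_i\cap B_k=R_i^{(k)}$ needs a short collection argument showing $R_i^{(n)}\cdots R_i^{(1)}$ is a subgroup before uniqueness of the normal form can be invoked; and the ``reversal'' of Lemma~\ref{lem:pdeg-1}, while true, deserves its own two-case proof mirroring the lemma's (increment $\lambda_1$ and take $w=\Delta_1$ when $\lambda_1<p-1$; otherwise carry and take $w=x_1^{p-1}\cdots x_{j-1}^{p-1}\Delta_j$).
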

\begin{proof}
	By Equation~\eqref{rmk:commutators} and Corollary~\ref{cor:commutators}, it is enough to notice that $\max\lbrace \pdeg(x^\Lambda\Delta_k)\mid x^\Lambda\Delta_k\in \gamma_i(W_n)\cap B_k\rbrace =p^{n-1}-i.$ That maximum is reached by applying Corollary~\ref{cor:commutators} to the maximum element $x_1^{p-1}\cdots x_{k-1}^{p-1}$ of $B_k$. Moreover, by applying Corollary~\ref{cor:commutators} to the monic monomial element of $B_k$ with $\pdeg$ equal to $s+i$, we get  the monic monomial element with $\pdeg$ equal to $s$ in $\gamma_i(W_n)\cap B_k$,  for each $s<p^{n-1}-i$. 
\end{proof}

\begin{lemma}
	Let \(A\) an abelian subgroup of \(G\) and \(B\trianglelefteq G\) such that
	\begin{enumerate}
		\item \(AB=A \ltimes B\) is normal in \(G\),
		\item there exists \(H\le G\) such that \(H\le N_G(A)\) and \(G=H(AB)\).
	\end{enumerate}
	Then \([AB,G] = ([A,G]\cap A) [B,G]\).
\end{lemma}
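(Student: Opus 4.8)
The plan is to establish the two inclusions separately, after first noting that the right-hand side is genuinely a subgroup. Since \(B\trianglelefteq G\) forces \([B,G]\trianglelefteq G\), the product \(K\defeq([A,G]\cap A)[B,G]\) of the subgroup \([A,G]\cap A\le A\) with the normal subgroup \([B,G]\) is itself a subgroup of \(G\). The inclusion \(K\le[AB,G]\) is then immediate: from \(A\le AB\) we get \([A,G]\cap A\le[A,G]\le[AB,G]\), and from \(B\le AB\) we get \([B,G]\le[AB,G]\); as \([AB,G]\) is a group containing both factors, it contains their product \(K\).

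For the reverse inclusion \([AB,G]\le K\) I would argue on generators. Every element of \(AB\) is of the form \(ab\) with \(a\in A\) and \(b\in B\), so \([AB,G]\) is generated by the commutators \([ab,g]\); expanding \([ab,g]=[a,g]^b[b,g]\), it suffices to place \([a,g]\) and \([b,g]\) inside \(K\) and to check that \(K\) is invariant under conjugation by \(b\in B\). The term \([b,g]\) lies in \([B,G]\le K\) by definition, so the crux is the factor \([a,g]\). Here I would use hypothesis (2): writing \(g=hc\) with \(h\in H\le N_G(A)\) and \(c\in AB\), the identity \([a,hc]=[a,c]\,[a,h]^c\) splits \([a,g]\) into two pieces to be analysed.

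The analysis of these pieces, together with the verification that \(K\) is \(B\)-invariant, is where the hypotheses are used and is the main obstacle. Because \(h\in N_G(A)\), we have \([a,h]=a^{-1}a^{h}\in A\), while clearly \([a,h]\in[A,G]\); hence \([a,h]\in[A,G]\cap A\). Writing \(c=a'b'\) with \(a'\in A\), \(b'\in B\), the commutativity of \(A\) gives \([a,h]^{a'}=[a,h]\), and then \([a,h]^{b'}=[a,h]\,[[a,h],b']\) with \([[a,h],b']\in[A,B]=[B,A]\le[B,G]\) since \(A\le G\); hence \([a,h]^c\in([A,G]\cap A)[B,G]=K\). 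Similarly \([a,c]=[a,a'b']=[a,b']\in[A,B]\le[B,G]\le K\), using \([a,a']=1\). Thus \([a,g]\in K\). Finally, conjugation by \(b\in B\) fixes \([B,G]\) and sends each \(x\in[A,G]\cap A\) to \(x[x,b]\), with \([x,b]\in[A,B]\le[B,G]\), so \(K^b=K\). Consequently \([a,g]^b\in K\) and \([ab,g]=[a,g]^b[b,g]\in K\), which yields \([AB,G]\le K\) and hence equality. The only real delicacy is bookkeeping: the repeated appeals to \(A\) abelian (to erase \([a,a']\) and \([a,h]^{a'}\)) and to \([A,B]\le[B,G]\) (to absorb every stray commutator into \([B,G]\)) are exactly what force all the conjugates back into \(K\).
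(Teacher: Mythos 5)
Your proof is correct and follows essentially the same route as the paper's: both argue on the generators \([ab,g]\) of \([AB,G]\), expand them via standard commutator identities, invoke the decomposition \(g\in H(AB)\) together with \(H\le N_G(A)\) and the commutativity of \(A\) to place the \(A\)-part in \(([A,G]\cap A)\), and absorb all remaining cross terms into the normal subgroup \([B,G]\). The only cosmetic differences are that the paper analyses \([g,a]\) in one telescoped expansion \([h,a][h,a,\bar b][\bar b,a]\), whereas you split \([a,g]=[a,c][a,h]^c\) and verify the \(B\)-invariance of \(([A,G]\cap A)[B,G]\) as a separate step.
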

\begin{proof}
	Let \(g=h\bar a\bar b\in G\) with \(h\in H\), \(\bar a \in A\), \(\bar b \in B\) and \(ab\in AB\). Note that the commutator
\begin{eqnarray*}
	[ab,g]=[a,g][a,g,b][b,g]\in [a,g][B,G].
\end{eqnarray*}
Moreover, since  \(A\) is abelian we have
\begin{equation*}
	[g,a]=[h,a][h,a,\bar b][\bar b,a] \in ([G,A]\cap A)[B,G]
\end{equation*}
Hence \([ab,g]   \in ([G,A]\cap A)[B,G]\), so \([AB,G]\le ([G,A]\cap A)[B,G]\). Since the opposite inclusion is trivial we have the claim.
\end{proof}
\begin{corollary}
	In the same hypotheses oh the previous lemma the following equality holds 
	\[[AB,\underbrace{G,\dots,G}_{k\text{ times}}] = ([A,\underbrace{G,\dots,G}_{k\text{ times}}]\cap A) [B,\underbrace{G,\dots,G}_{k\text{ times}}]\]
\end{corollary}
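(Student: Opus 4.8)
The plan is to argue by induction on \(k\), the base case \(k=1\) being exactly the previous lemma. For the inductive step, abbreviate \(A_{k-1}=[A,\underbrace{G,\dots,G}_{k-1\text{ times}}]\cap A\) and \(B_{k-1}=[B,\underbrace{G,\dots,G}_{k-1\text{ times}}]\), so that the inductive hypothesis reads \([AB,\underbrace{G,\dots,G}_{k-1\text{ times}}]=A_{k-1}B_{k-1}\). Since \([AB,\underbrace{G,\dots,G}_{k\text{ times}}]=[\,[AB,\underbrace{G,\dots,G}_{k-1\text{ times}}]\,,G]=[A_{k-1}B_{k-1},G]\), the idea is to apply the previous lemma a second time, now to the triple \((A_{k-1},B_{k-1},G)\) in place of \((A,B,G)\), and then read off the two factors of its conclusion.

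First I would record the routine half. Hypothesis (1) is inherited: \(A_{k-1}\le A\) is abelian; \(B_{k-1}\) is normal in \(G\) because \(B\) is; the product \(A_{k-1}B_{k-1}=[AB,\underbrace{G,\dots,G}_{k-1\text{ times}}]\) is normal in \(G\) as an iterated commutator of the normal subgroup \(AB\); and it is an internal semidirect product since \(A_{k-1}\cap B_{k-1}\le A\cap B=1\) with \(B_{k-1}\trianglelefteq G\). Granting also hypothesis (2), the lemma yields \([A_{k-1}B_{k-1},G]=([A_{k-1},G]\cap A_{k-1})\,[B_{k-1},G]\). Here \([B_{k-1},G]=[B,\underbrace{G,\dots,G}_{k\text{ times}}]\) by definition. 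For the other factor, \([A_{k-1},G]\subseteq[A,\underbrace{G,\dots,G}_{k\text{ times}}]\) and \(A_{k-1}\subseteq A\) give \([A_{k-1},G]\cap A_{k-1}\subseteq[A,\underbrace{G,\dots,G}_{k\text{ times}}]\cap A\); conversely, intersecting the displayed decomposition with \(A\) and using \(A\cap B=1\) identifies \([A_{k-1},G]\cap A_{k-1}\) with \([AB,\underbrace{G,\dots,G}_{k\text{ times}}]\cap A\), which visibly contains \([A,\underbrace{G,\dots,G}_{k\text{ times}}]\cap A\). The two inclusions give equality of the first factor with \([A,\underbrace{G,\dots,G}_{k\text{ times}}]\cap A\), closing the induction.

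The hard part, and the step I expect to be the real obstacle, is verifying hypothesis (2) for the new triple: producing \(H'\le N_G(A_{k-1})\) with \(G=H'\,(A_{k-1}B_{k-1})\). The \(A\)-side is easy: conjugation by \(H\) preserves both \(A\) and each \([A,G,\dots,G]\) (as \(H\le N_G(A)\)), so \(H\) normalizes \(A_{k-1}\), and \(A\) centralizes \(A_{k-1}\le A\); hence \(HA\subseteq N_G(A_{k-1})\). The genuine difficulty is the \(B\)-direction, where a given \(b\in B\) need not normalize \(A_{k-1}\) and must be corrected modulo \(A_{k-1}B_{k-1}\). The structural input one has to exploit is that conjugation by \(B\) moves \(A_{k-1}\) only inside the normal subgroup \(A_{k-1}B_{k-1}=[AB,\underbrace{G,\dots,G}_{k-1\text{ times}}]\), i.e. that the induced action of \(B\) on \(A_{k-1}B_{k-1}/B_{k-1}\cong A_{k-1}\) is trivial, so that \([B,A_{k-1}]\subseteq B_{k-1}\); from this one deduces that every coset \(bB_{k-1}\) meets \(N_G(A_{k-1})\), whence \(G=N_G(A_{k-1})\,(A_{k-1}B_{k-1})\) and \(H'=N_G(A_{k-1})\) works. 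Establishing this commutator containment cleanly — rather than the formal bookkeeping of the induction — is where the essential content of the proof lies.
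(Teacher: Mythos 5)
You have located the crux correctly, and the rest of your plan is sound: hypothesis (1) is indeed inherited, and the final Dedekind-law bookkeeping identifying \([A_{k-1},G]\cap A_{k-1}\) with \([A,\underbrace{G,\dots,G}_{k}]\cap A\) would work \emph{if} the lemma applied. But your resolution of hypothesis (2) is a non sequitur, and in fact hypothesis (2) is false for the derived triple \((A_{k-1},B_{k-1},G)\), so re-applying the lemma as a black box is not a fixable gap but a dead end. The (true) containment \([B,A_{k-1}]\le B_{k-1}\) says only that each \(b\in B\) normalizes \(A_{k-1}B_{k-1}\) and induces the identity on \(A_{k-1}B_{k-1}/B_{k-1}\); equivalently, \(A_{k-1}^{\,b}\) is a complement of \(B_{k-1}\) in \(A_{k-1}B_{k-1}\) congruent to \(A_{k-1}\) modulo \(B_{k-1}\). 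Concluding that \(A_{k-1}^{\,b}=A_{k-1}^{\,c}\) for some \(c\in B_{k-1}\) is a conjugacy-of-complements (vanishing of a \(1\)-cocycle) assertion, which does not follow and genuinely fails — already in the situation this corollary is used for in the paper. Take \(G=W_3\), \(A=B_2\), \(B=B_3\), \(H=\langle\Delta_1\rangle\), \(k=2\). Then \([A,G]\cap A=\Set{f(x_1)\Delta_2\mid \deg f\le p-2}\), and \([B,G]\) is spanned by the \(x_1^{a}x_2^{b}\Delta_3\) with \((a,b)\ne(p-1,p-1)\). Since \([A,G]\cap A\) contains the constants \(c\Delta_2\), an element \(g\Delta_3\) centralizes it only if \(g=g(x_1)\), so \(C_{B_3}([A,G]\cap A)=\Set{g(x_1)\Delta_3}\le[B,G]\); and \(w\in G\) can normalize \([A,G]\cap A\le B_2\) only if its \(B_3\)-component lies in this centralizer, because that component contributes the \(B_3\)-part of each conjugate \((f\Delta_2)^{w}\), which must vanish. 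Hence both \(N_G([A,G]\cap A)\) and \(([A,G]\cap A)[B,G]\) lie in the proper subgroup \([B,G]\,B_2\langle\Delta_1\rangle\), which misses \(x_1^{p-1}x_2^{p-1}\Delta_3\in B\). So no \(H'\le N_G([A,G]\cap A)\) with \(G=H'\,([A,G]\cap A)[B,G]\) can exist.

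What the induction actually needs is the \emph{stronger} containment \([A_{k-1},B]\le B_{k}\) (one more \(G\) than you claimed), and with it one reruns the computation inside the proof of the lemma rather than quoting its statement. Writing \(A^{(j)}=[A,\underbrace{G,\dots,G}_{j}]\) and \(B^{(i)}=[B,\underbrace{G,\dots,G}_{i}]\) (all normal in \(G\)), one proves \([A^{(j)},B^{(i)}]\le B^{(i+j+1)}\) by induction on \(j\): the case \(j=0\) is \([A,B^{(i)}]\le[G,B^{(i)}]=B^{(i+1)}\), and for \(j\ge1\) the three subgroups lemma with \(N=B^{(i+j+1)}\trianglelefteq G\) gives \([[A^{(j-1)},G],B^{(i)}]\le N\) from the two inductive instances \([[G,B^{(i)}],A^{(j-1)}]\le N\) and \([[B^{(i)},A^{(j-1)}],G]\le N\). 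Then, in your inductive step, for \(x\in A_{k-1}\), \(c\in B_{k-1}\), \(g=h\bar a\bar b\): \([xc,g]=[x,g][x,g,c][c,g]\) with \([c,g],[x,g,c]\in B_k\), and \([x,g]=[x,\bar b]\,[x,h]^{\bar a\bar b}\), where \([x,\bar b]\in[A_{k-1},B]\le B_k\), \([x,h]\in A_k\) (your observation that \(H\) normalizes \(A_{k-1}\), pushed one step into \(A^{(k)}\)), and \([[x,h],\bar a\bar b]=[[x,h],\bar b]\in[A_k,B]\le B_k\). Hence \([A_{k-1}B_{k-1},G]\le A_kB_k\), the reverse inclusion being the trivial one, and the induction closes. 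For what it is worth, the paper states this corollary with no proof at all, presenting it as immediate; your difficulty is real — the corollary is not a formal consequence of the lemma's statement, and the three-subgroups ingredient (or an equivalent) must be supplied.
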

The following is another straight consequence
\begin{corollary}\label{cor:desc_satur}
	$\gamma_i(W_n)= (\gamma_i(W_n)\cap B_n)\rtimes\dots \rtimes (\gamma_i(W_n)\cap B_1) $.
\end{corollary}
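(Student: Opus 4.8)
The plan is to obtain the decomposition by peeling off one base subgroup at a time, applying the $k$-fold commutator identity of the previous corollary repeatedly with the ambient group held fixed at $G=W_n$; throughout I write $B_{\ge k}\defeq B_kB_{k+1}\cdots B_n$ and $B_{<k}\defeq B_1\cdots B_{k-1}$. First I would record the structural consequences of the commutator formula~\eqref{taylor}: since $[f_k\Delta_k,f_j\Delta_j]\in B_k$ whenever $j<k$, one has $[B_i,B_j]\subseteq B_{\max\{i,j\}}$ for all $i,j$. From this I read off that (i) each $B_{\ge k}$ is normal in $W_n$; (ii) $B_{\ge k}=B_k\ltimes B_{\ge k+1}$ with $B_k$ abelian and $B_{\ge k+1}\trianglelefteq W_n$; and (iii) $B_{<k}\le N_{W_n}(B_k)$, since conjugating an element of $B_k$ by one of $B_j$ with $j<k$ leaves it inside $B_k$; finally $W_n=B_{<k}B_{\ge k}$ because $B_{\ge k}$ is normal and the two factors generate $W_n$.

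Next I would run the peeling induction. For each $k=1,\dots,n-1$ I apply the $k$-fold corollary (with $i-1$ copies of $G=W_n$) to the data $A=B_k$, $B=B_{\ge k+1}$, $H=B_{<k}$: hypothesis (1) is exactly (i)--(ii), and hypothesis (2) is (iii) together with $W_n=B_{<k}B_{\ge k}$ (for $k=1$ one takes $H=1$). This gives
\[
[B_{\ge k},\underbrace{W_n,\dots,W_n}_{i-1}]=\bigl([B_k,\underbrace{W_n,\dots,W_n}_{i-1}]\cap B_k\bigr)\,[B_{\ge k+1},\underbrace{W_n,\dots,W_n}_{i-1}].
\]
Since $B_{\ge 1}=W_n$, the left-hand side for $k=1$ is $\gamma_i(W_n)$, and the residual factor for $k=n-1$ is $[B_n,W_n,\dots,W_n]\subseteq B_n$ by normality of $B_n$. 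Iterating, the identities telescope to $\gamma_i(W_n)=F_1F_2\cdots F_n$ with $F_k\defeq[B_k,W_n,\dots,W_n]\cap B_k\subseteq B_k$.

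Finally I would identify the factors and assemble the semidirect structure. By the uniqueness of the normal form of elements of $W_n=B_n\rtimes\cdots\rtimes B_1$, any element of $\gamma_i(W_n)=F_1\cdots F_n$ that lies in $B_k$ has all of its components outside $B_k$ trivial, whence $F_k=\gamma_i(W_n)\cap B_k$. Moreover each partial product $\gamma_i(W_n)\cap B_{\ge k}=\prod_{j\ge k}F_j$ is normal in $\gamma_i(W_n)$, being the intersection of the two subgroups $\gamma_i(W_n)$ and $B_{\ge k}$, each normal in $W_n$; this normality lets me reorder the factors and turns the chain $1\trianglelefteq\gamma_i(W_n)\cap B_n\trianglelefteq\cdots\trianglelefteq\gamma_i(W_n)$ into the iterated semidirect product $(\gamma_i(W_n)\cap B_n)\rtimes\cdots\rtimes(\gamma_i(W_n)\cap B_1)$.

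The step I expect to cost the most care is the verification of hypothesis (2) at each stage, that is, producing the complement $H=B_{<k}$ normalizing $A=B_k$; this is precisely where the triangular relation $[B_i,B_j]\subseteq B_{\max\{i,j\}}$ is essential. Reconciling the left-to-right order in which the telescoping produces the $F_k$ with the order $B_n\rtimes\cdots\rtimes B_1$ of the statement is then a matter of invoking the normality of the partial products $\gamma_i(W_n)\cap B_{\ge k}$.
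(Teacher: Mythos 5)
Your proof is correct and takes exactly the route the paper intends: the paper leaves this corollary as a ``straight consequence'' of the preceding iterated identity $[AB,G,\dots,G]=([A,G,\dots,G]\cap A)[B,G,\dots,G]$, and your instantiation $A=B_k$, $B=B_{k+1}\cdots B_n$, $H=B_1\cdots B_{k-1}$ (justified via $[B_i,B_j]\subseteq B_{\max\{i,j\}}$), followed by telescoping and the normal-form identification of the factors $F_k=\gamma_i(W_n)\cap B_k$, is precisely the omitted verification.
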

Notice that the terms of the lower central series of \(W_n\) are saturated subgroups and  $$\gamma_i(W_n)=\gamma_{i+1}(W_n)\rtimes\langle c x^\Lambda\Delta_k\mid \pdeg(x^\Lambda)=p^{n-1}-i,\ 1\le k\le n\text{ and }c\in \F_p\rangle.$$

The remaining part of this section is devoted to prove the following theorem.
\begin{theorem}\label{thm:upperlower}
	The upper and the lower central series of $W_n$ coincide.
\end{theorem}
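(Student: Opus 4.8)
The plan is to pin down the nilpotency class of $W_n$ and then match the upper central series, read from the top, with the lower central series term by term. Put $c=p^{n-1}$. By Lemma~\ref{lem:gammasuiB} and Corollary~\ref{cor:desc_satur}, an element $g\in W_n$ lies in $\gamma_i(W_n)$ exactly when every power monomial occurring in $g$ has $p$-degree at most $p^{n-1}-i$; in particular $\gamma_c(W_n)=\langle\Delta_n\rangle\neq1$ while $\gamma_{c+1}(W_n)=1$, so $W_n$ is nilpotent of class $c$. Writing $Z_i(W_n)$ for the terms of the upper central series, I will prove $Z_i(W_n)=\gamma_{c+1-i}(W_n)$ for all $0\le i\le c$ by induction on $i$, the case $i=0$ being the trivial identity $Z_0(W_n)=1=\gamma_{c+1}(W_n)$.

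For the inductive step I assume $Z_{i-1}(W_n)=\gamma_{c-i+2}(W_n)$ and set $m=c-i+1$. Since $Z_i/Z_{i-1}=Z(W_n/Z_{i-1})$, it is enough to prove the single identity $Z(W_n/\gamma_{m+1}(W_n))=\gamma_m(W_n)/\gamma_{m+1}(W_n)$ for every $1\le m\le c$. The inclusion $\supseteq$ is immediate, as $[\gamma_m,W_n]\le\gamma_{m+1}$ makes the image of $\gamma_m$ central. For the reverse inclusion I take an arbitrary $g\notin\gamma_m(W_n)$ and look for $w\in W_n$ with $[g,w]\notin\gamma_{m+1}(W_n)$; such a $w$ witnesses that the class of $g$ is not central, giving $Z(W_n/\gamma_{m+1})\subseteq\gamma_m/\gamma_{m+1}$.

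The engine of the reverse inclusion is a one-step descent: for every power monomial $x^\Lambda\Delta_k$ with $\pdeg(x^\Lambda\Delta_k)\ge1$ there is a monic monomial $w$ with $[x^\Lambda\Delta_k,w]\neq1$ and $\pdeg([x^\Lambda\Delta_k,w])=\pdeg(x^\Lambda\Delta_k)-1$. When $\Lambda\neq0$ this is precisely Lemma~\ref{lem:pdeg-1}. The genuinely new case is $\Lambda=0$, where the leading monomial is a constant $\Delta_k$ with $1\le k<n$ and $\pdeg(\Delta_k)=\mu_k\ge1$, and Lemma~\ref{lem:pdeg-1} no longer applies. Here I propose to commute $\Delta_k$ against the top monomial $x_1^{p-1}\cdots x_{\ell-1}^{p-1}\Delta_\ell$ of a higher base subgroup $B_\ell$ with $k<\ell\le n$: by the Taylor expansion~\eqref{taylor} the surviving leading term is a nonzero scalar multiple of the monomial obtained by lowering the exponent of $x_k$ from $p-1$ to $p-2$, whose $p$-degree equals $\mu_k-1$. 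Carrying the descent past the constants in this way is, I expect, the main technical obstacle.

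Granting the descent lemma, I conclude as follows. Since $\gamma_m$ is saturated and $g\notin\gamma_m$, the element $g$ contains a power monomial of maximal $p$-degree $D\ge p^{n-1}-m+1$; among these top monomials I select the one, say $x^{\Lambda'}\Delta_{k'}$, sitting in the base subgroup of largest index $k'$. As $\pdeg$ is injective on the monomials of any single base subgroup and $\mu_k$ is strictly decreasing in $k$, distinct top monomials always occupy distinct base subgroups, and if the selected $x^{\Lambda'}\Delta_{k'}$ is a constant then it is forced to be the unique top monomial. Applying the descent lemma to $x^{\Lambda'}\Delta_{k'}$ yields $w$ for which $[x^{\Lambda'}\Delta_{k'},w]$ carries a monomial of $p$-degree $D-1$ in a definite base subgroup; by the maximality of $k'$, every other top monomial of $g$, commuted with the same $w$, lands in a different base subgroup, while by~\eqref{rmk:commutators} all lower monomials contribute only in $p$-degree strictly below $D-1$. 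Thus no cancellation occurs, $[g,w]$ retains a monomial of $p$-degree $D-1\ge p^{n-1}-m$, and therefore $[g,w]\notin\gamma_{m+1}(W_n)$. This establishes the reverse inclusion, completes the induction, and shows that the upper and the lower central series of $W_n$ run through exactly the same subgroups.
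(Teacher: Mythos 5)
Your proof is correct, and it takes a genuinely different route from the paper's. The paper works along the base subgroups: it first proves \(Z_i(W_n)\cap B_n=\gamma_{p^{n-1}-i}(W_n)\cap B_n\) by combining the uniseriality of \(B_n\) as a \(W_n\)-module with an index-counting argument (Lemma~\ref{lem:upperlowerBn}), then uses the projections \(W_n\to W_k\) to show that \(Z_i(W_n)\) splits as the product of its intersections with the \(B_k\) (Corollary~\ref{cor:upperlower}), and concludes by induction on \(n\) through \(W_{n-1}\cong W_n/B_n\), matching this splitting against Corollary~\ref{cor:desc_satur}. You instead induct up the central series itself, reducing everything to the identity \(Z(W_n/\gamma_{m+1}(W_n))=\gamma_m(W_n)/\gamma_{m+1}(W_n)\) and exhibiting, for each \(g\notin\gamma_m(W_n)\), an explicit witness \(w\) with \([g,w]\notin\gamma_{m+1}(W_n)\). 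Your key new ingredient---extending the one-step descent of Lemma~\ref{lem:pdeg-1} to the constants \(\Delta_k\) with \(k<n\) by commuting against \(x_1^{p-1}\cdots x_{\ell-1}^{p-1}\Delta_\ell\) with \(\ell>k\)---is sound: by \eqref{taylor} the leading term is a nonzero scalar multiple of \(x_1^{p-1}\cdots x_k^{p-2}\cdots x_{\ell-1}^{p-1}\Delta_\ell\), of \(p\)-degree \(\mu_k-1\), and the paper performs essentially the same computation later, in Lemma~\ref{lem:cyclic_commutator_group} and in the proposition on normal closures of elements of the form \(f\Delta_k\cdot h\). What each approach buys: yours is more elementary and self-contained (no uniseriality, no counting) and produces concrete non-centrality witnesses; the paper's yields the structural splitting of \(Z_i(W_n)\) along base subgroups as a reusable intermediate statement.

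Two points in your cancellation analysis should be tightened, though both are repaired by standard facts. First, when the witness \(w\) lies in a \emph{higher} base subgroup than a monomial \(M\) of \(g\), inequality \eqref{rmk:commutators} bounds \(\pdeg([M,w])\) only by \(\pdeg(w)-1\), which is useless here; the bound you actually need---that non-top monomials contribute only in \(p\)-degree at most \(D-2\)---follows instead from \(M\in\gamma_{t+1}(W_n)\) (where \(t=p^{n-1}-D\)), hence \([M,w]\in[\gamma_{t+1}(W_n),W_n]\le\gamma_{t+2}(W_n)\), hence all monomials of \([M,w]\) have \(p\)-degree at most \(D-2\) by Lemma~\ref{lem:gammasuiB} and Corollary~\ref{cor:desc_satur}. (Note that all your witnesses satisfy \(\pdeg(w)=p^{n-1}-1\).) Second, you implicitly replace \([g,w]\) by the product of the commutators of the single monomials of \(g\) with \(w\); this is legitimate modulo \(\gamma_{t+2}(W_n)\), since by \([ab,c]=[a,c]^b[b,c]\) the correction terms are iterated commutators lying in \(\gamma_{t+2}(W_n)\), so again of \(p\)-degree at most \(D-2\). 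With these two remarks in place, your argument---top monomials commute into pairwise distinct base subgroups, everything else lies strictly below degree \(D-1\)---closes completely.
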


To prove this theorem, we need to prove some preliminary results.

\begin{lemma}\label{lem:upperlowerBn}
	\(Z_i(W_n)\cap B_n = \gamma_{p^{n-1}-i}(W_n)\cap B_n = [(Z_{i+1}(W_n)\cap B_n), W_n]\).
\end{lemma}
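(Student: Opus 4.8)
Throughout I would write $V_s=\langle x^\Lambda\Delta_n\mid \pdeg(x^\Lambda)\le s\rangle$ for $-1\le s\le p^{n-1}-1$, with $V_{-1}=1$. Since $x^\Lambda\mapsto\pdeg(x^\Lambda)$ is a bijection from the power monomials of $B_n$ onto $\{0,1,\dots,p^{n-1}-1\}$, the $V_s$ form a complete chain of subgroups of the abelian group $B_n$, each normal in $W_n$ because commutators strictly lower the $p$-degree by~\eqref{rmk:commutators}. Replacing $i$ by $p^{n-1}-i$ in Lemma~\ref{lem:gammasuiB} and using $\mu_n=0$ gives at once $\gamma_{p^{n-1}-i}(W_n)\cap B_n=V_i$. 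The plan is therefore to prove the first equality $Z_i(W_n)\cap B_n=V_i$ by induction on $i$, and then to read off the third equality from it.

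The computational heart is the identity $[V_s,W_n]=V_{s-1}$, which I would prove exactly as in Lemma~\ref{lem:gammasuiB}. The inclusion $[V_s,W_n]\subseteq V_{s-1}$ is immediate from~\eqref{rmk:commutators} together with $[B_n,B_n]=1$. For the reverse inclusion I would argue by downward induction on the $p$-degree: for each $t\le s-1$, Lemma~\ref{lem:pdeg-1} applied to the power monomial of $p$-degree $t+1$ (which lies in $V_s$ since $t+1\le s$) furnishes a commutator in $B_n$ whose leading term is the power monomial of $p$-degree $t$, and the injectivity of $\pdeg$ on the monomials of $B_n$ forbids any cancellation of that term, so working modulo $V_{t-1}$ recovers every monomial of $p$-degree $\le s-1$.

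For the first equality I would induct on $i$. The base case records the center: by~\eqref{taylor}, for $f\Delta_n\in B_n$ and $\ell<n$ one has $[f\Delta_n,\Delta_\ell]=\bigl(f(x+e_\ell)-f(x)\bigr)\Delta_n$, which vanishes for every $\ell$ exactly when $f$ is independent of all variables, so $Z(W_n)\cap B_n=\langle\Delta_n\rangle=V_0$, anchoring the induction. For the step, assume $Z_{i-1}(W_n)\cap B_n=V_{i-1}$. Since $B_n$ is abelian and normal, $[b,W_n]\subseteq B_n$ for $b\in B_n$, so $b\in Z_i(W_n)$ iff $[b,W_n]\subseteq Z_{i-1}(W_n)\cap B_n=V_{i-1}$; the inclusion $V_i\subseteq Z_i(W_n)\cap B_n$ is then exactly $[V_i,W_n]=V_{i-1}$ from the previous step. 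For the reverse inclusion, if $b\notin V_i$ let $s\ge i+1$ be the top $p$-degree occurring in $b$; applying Lemma~\ref{lem:pdeg-1} to the leading monomial of $b$ produces $w$ with $\pdeg([b,w])=s-1\ge i$, because every lower monomial of $b$ contributes (by~\eqref{rmk:commutators}) a commutator term of $p$-degree at most $s-2$ and hence cannot cancel the surviving leading term. Thus $[b,w]\notin V_{i-1}$ and $b\notin Z_i(W_n)$, giving $Z_i(W_n)\cap B_n=V_i$.

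The third equality is then formal: the first equality at level $i+1$ gives $Z_{i+1}(W_n)\cap B_n=V_{i+1}$, whence
\[
[(Z_{i+1}(W_n)\cap B_n),W_n]=[V_{i+1},W_n]=V_i=\gamma_{p^{n-1}-i}(W_n)\cap B_n.
\]
I expect the main obstacle to be the reverse inclusion in the inductive step, i.e.\ showing that any $b$ whose $p$-degree exceeds $i$ must push a commutator out of $Z_{i-1}(W_n)$. This relies on having an \emph{exact}, not merely strict, drop of the $p$-degree, supplied by Lemma~\ref{lem:pdeg-1}, together with the injectivity of $\pdeg$ on the monomials of $B_n$ that rules out cancellation of the surviving leading term; the weaker strict inequality~\eqref{rmk:commutators} alone would not close the argument.
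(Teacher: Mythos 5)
Your proof is correct in substance, but it takes a genuinely different route from the paper's. The paper argues structurally: it invokes the fact that \(B_n\) is a uniserial \(W_n\)-module, so that the subgroups \(Z_j(W_n)\cap B_n\) form a chain inside the unique chain of submodules of \(B_n\), and then a counting argument along that chain (the terms must climb from the trivial subgroup to \(B_n\), of order \(p^{p^{n-1}}\), in exactly \(p^{n-1}\) steps, each step of index at most \(p\)) forces every consecutive index to equal \(p\); the identification with the terms \(\gamma_j(W_n)\cap B_n\) then follows inductively from the bottom of the chain. You instead compute everything explicitly: you identify the full submodule lattice as the chain \(V_s\), prove the exact relation \([V_s,W_n]=V_{s-1}\) from Lemma~\ref{lem:pdeg-1} together with the injectivity of \(\pdeg\) on the monomials of \(B_n\) (which rules out cancellation of the leading term), compute the center, and run an induction that identifies each \(Z_i(W_n)\cap B_n\) outright. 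Your route is longer but self-contained and constructive: it re-proves the uniseriality that the paper merely cites, it isolates exactly where the \emph{exact} degree drop of Lemma~\ref{lem:pdeg-1} (as opposed to the strict inequality~\eqref{rmk:commutators}) is indispensable, and the third equality of the lemma falls out directly from \([V_{i+1},W_n]=V_i\) rather than from the counting. The paper's route is shorter and avoids computing the center, at the price of leaning on the module-theoretic input and a fairly delicate order count.

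One wrinkle you should be aware of, shared with the paper itself: the indexing. Your base case establishes \(Z(W_n)\cap B_n=V_0\), which anchors the induction ``\(Z_i(W_n)\cap B_n=V_i\)'' only under the convention \(Z_0(W_n)=Z(W_n)\); with the standard convention \(Z_1(W_n)=Z(W_n)\), what your argument actually proves is \(Z_i(W_n)\cap B_n=V_{i-1}=\gamma_{p^{n-1}-i+1}(W_n)\cap B_n\), so the indices in the statement are off by one. This is not a defect of your argument relative to the paper: the paper's own proof ends by asserting \(Z_1(W_n)\cap B_n=\gamma_{p^{n-1}}(W_n)\cap B_n\), which contradicts its stated claim at \(i=1\) (which would read \(Z_1(W_n)\cap B_n=\gamma_{p^{n-1}-1}(W_n)\cap B_n\)) in exactly the same way. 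The discrepancy is inherited from the statement; your mathematics is sound once a single consistent convention for the upper central series is fixed.
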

\begin{proof}
	We know that \(Z_i(W_n)\cap B_n \ge  \gamma_{p^{n-1}-i}(W_n)\cap B_n\). 
	Since \(B_n\) is a uniserial \(W_n\)-module, for all \(i,j\ge 1\) we have the followings 
	\begin{enumerate}
		\item \(\gamma_{j}(W_n)\cap B_n = [B_n,\underbrace{W_n,\dots,W_n}_{\text{\(j-1\) times}}]\);
		\item \(|(\gamma_{p^{n-1}-i}(W_n)\cap B_n) :  (\gamma_{p^{n-1}-i-1}(W_n)\cap B_n|=p)\);
		\item \(|Z_i(W_n): Z_{i-1}(W_n)| \ge p\);
		\item  \(Z_{p^{n-1}-1}(W_n)\cap B_n\ne B_n\); 
		\item \(Z_{p^{n-1}}(W_n)\cap B_n=B_n\).
	\end{enumerate}
	Thus, \(p^{n-1}=\prod_{j=1}^{p^{n-1}}|(Z_{j}(W_n)\cap B_n): (Z_{j-1}(W_n) \cap B_n)| \ge \prod_{j=1} p =p^{n-1}\) and so \(|(Z_{j}(W_n) \cap B_n): (Z_{j-1}(W_n) \cap B_n)|=p\) for all \(j\). The statement follows inductively noting that
	\(Z_{1}(W_n)\cap B_n = \gamma_{p^{n-1}}(W_n)\cap B_n\) 
\end{proof}

\begin{lemma}
	If \(g=g_k\dots g_n\in Z_i(W_n)\) with \(g_s\in B_s\) then \(g_k\in Z_i(W_n)\).
\end{lemma}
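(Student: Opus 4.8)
The goal is to prove $g_k\in Z_i(W_n)$, that is, that every length-$i$ commutator $[g_k,w_1,\dots,w_i]$ is trivial. The plan is to recast this as a bound on $p$-degrees and then to isolate the lowest component $g_k$ by an explicit descent to the centre of $W_n$.

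First I would record the degree bookkeeping. From \eqref{taylor} and its antisymmetric counterpart one gets $[B_a,B_b]\subseteq B_{\max(a,b)}$, so iterated commutators never lower the base-subgroup index; in particular $U_k:=B_k\cdots B_n$ is normal in $W_n$ and the lowest ($B_k$-)component of any $[g,w_1,\dots,w_\ell]$ depends only on $g_k$. Moreover every nontrivial commutator strictly lowers the $p$-degree of its first entry, both for the lowering steps of Equation~\eqref{rmk:commutators} and for the raising moves $[\,f\Delta_m,x_m\Delta_{m+1}]=\pm f\Delta_{m+1}$, whose degree drops by $\mu_m-\mu_{m+1}=p^{m-1}(p-1)$. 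Hence a monomial $x^\Lambda\Delta_k$ with $\pdeg(x^\Lambda\Delta_k)\le i-1$ already lies in $Z_i(W_n)$, since after $i$ strict drops its degree would be forced below $0$. As $g_k$ is the sum of its monomials and $Z_i\cap B_k$ is a subgroup, it therefore suffices to show that $g\in Z_i$ forces $\pdeg(x^\Lambda\Delta_k)\le i-1$ for every monomial $x^\Lambda\Delta_k$ occurring in $g_k$.

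I would prove this contrapositively. Assume some monomial of $g_k$ has $d:=\pdeg(x^\Lambda\Delta_k)\ge i$ and choose one of maximal $p$-degree. Using Lemma~\ref{lem:pdeg-1} and Corollary~\ref{cor:commutators} to lower the $p$-degree one unit at a time inside $B_k$, and the raising moves $[\,\cdot\,,x_m\Delta_{m+1}]$ to cross the base-subgroup boundaries, I would build a sequence $w_1,\dots,w_d$ of monic monomials for which $[g,w_1,\dots,w_d]$ collapses to a nonzero multiple $c\Delta_n$ of the central generator of $B_n$: the raising steps annihilate each higher component $g_{k+1},\dots,g_n$ exactly when the process reaches its own base subgroup (by commutativity of $B_s$), while the tuned lowering steps carry the chosen leading monomial of $g_k$ all the way down to $\Delta_n$. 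Since $d\ge i$ and $g\in Z_i$, the truncation $[g,w_1,\dots,w_i]=1$ forces $[g,w_1,\dots,w_d]=1$, contradicting $c\ne 0$; hence $d\le i-1$ for every monomial of $g_k$, and by the previous paragraph $g_k\in Z_i$.

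The hard part is exactly this construction: producing a single sequence $w_1,\dots,w_d$ that simultaneously carries the distinguished monomial of $g_k$ to a nonzero multiple of $\Delta_n$ with a strict unit drop at each step and kills the contributions of the remaining components $g_{k+1},\dots,g_n$ (and of the lower-degree monomials of $g_k$), so that no cancellation occurs in the final $B_n$-layer. This is where the non-decreasing base-subgroup filtration, the uniserial structure of $B_n$ from Lemma~\ref{lem:upperlowerBn}, and the explicit degree drops of the raising and lowering operators must be combined carefully; once the isolation of $g_k$ in the top layer is secured, the contradiction—and hence the lemma—follows immediately.
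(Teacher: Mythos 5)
Your route is genuinely different from the paper's: the paper projects onto \(W_k\) via the canonical map with kernel \(B_{k+1}\cdots B_n\) and then invokes Lemma~\ref{lem:upperlowerBn}, whereas you aim at an explicit certificate of non-centrality, namely a long nonvanishing iterated commutator ending at a multiple of \(\Delta_n\). The first half of your argument is essentially sound (the claim that monomials of \(p\)-degree at most \(i-1\) lie in \(Z_i(W_n)\) is best phrased as \(\gamma_{p^{n-1}-i+1}(W_n)\le Z_i(W_n)\) via Lemma~\ref{lem:gammasuiB} and Corollary~\ref{cor:desc_satur}, since group commutators against arbitrary non-monomial elements are not controlled by monomial degree drops alone), and the truncation logic is correct. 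The genuine gap is in the construction of the certificate, and it is one your own text half-acknowledges: the raising moves \([f\Delta_m,x_m\Delta_{m+1}]=\pm f\Delta_{m+1}\) drop the \(p\)-degree by \(\mu_m-\mu_{m+1}=p^{m-1}(p-1)\ge 2\), not by \(1\). Hence the chain you actually describe has length \(\pdeg(x^\Lambda)+(n-k)\), which is strictly smaller than \(d=\pdeg(x^\Lambda)+\mu_k\) whenever \(k<n\); you cannot simultaneously claim that the sequence has length \(d\) and that it uses these crossing moves. Since the contradiction with \(g\in Z_i(W_n)\) requires a nontrivial chain of length at least \(i\), and nothing guarantees \(\pdeg(x^\Lambda)+(n-k)\ge i\), the argument stalls exactly when \(\pdeg(x^\Lambda)+(n-k)<i\le d\). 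Concretely, take \(p=3\), \(n=2\), \(g=g_1=\Delta_1\) and \(i=2\): the monomial degree is \(d=\mu_1=2\ge i\), but your moves produce only \([\Delta_1,x_1\Delta_2]=\pm\Delta_2\), a nontrivial chain of length \(1\), which is no obstruction to lying in \(Z_2(W_2)\) --- yet this is precisely the case that must be excluded.

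The gap is repairable, but it needs a different crossing move: from \(\Delta_m\) commutate with \(x_1^{p-1}\cdots x_m^{p-1}\Delta_{m+1}\), whose commutator has \(B_{m+1}\)-component \(x_1^{p-1}\cdots x_{m-1}^{p-1}\bigl((x_m+1)^{p-1}-x_m^{p-1}\bigr)\Delta_{m+1}\) with leading term of \(p\)-degree exactly \(\mu_m-1\), a unit drop. Then descend inside \(B_{m+1}\) by Lemma~\ref{lem:pdeg-1} applied to leading terms (lower terms drop at least as fast, by Equation~\eqref{rmk:commutators}, so they stay strictly below), cross again, and so on; now every step drops the degree by exactly one, the chain has length exactly \(d\), and your observation that a crossing element of \(B_{m+1}\) commutes with the junk's \(B_{m+1}\)-component (abelianness of \(B_{m+1}\)) still keeps the higher components \(g_{k+1},\dots,g_n\) from interfering. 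In the example above this gives \([\Delta_1,x_1^2\Delta_2,\Delta_1]=\pm 2\Delta_2\neq 1\), the required length-\(2\) certificate. With this correction your proof goes through and is self-contained, though it is considerably longer than the paper's three-line quotient argument, which gets the conclusion from \(\psi(Z_i(W_n))\le Z_{i-p^{n-1}+p^{k-1}}(W_k)\) together with Lemma~\ref{lem:upperlowerBn}; on the other hand, your descent makes explicit the commutator witnesses that the paper's projection argument leaves implicit.
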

\begin{proof}
	Let \(\psi \colon W_{n} \to W_{k}\) be the canonical map whose kernel is \(B_{k+1}\cdots B_n\). Note that \(\psi(Z_i(W_n)) \le Z_{i-p^{n-1}+p^{k-1}}(W_k)\). By Lemma~\ref{lem:upperlowerBn} it follows that \[\psi(g)=g_k\in Z_{i-p^{n-1}+p^{k-1}}(W_k)\cap B_k= \gamma_{p^{n-1}-i}(W_k)\cap B_k= \gamma_{p^{n-1}-i}(W_n)\cap B_k.\]
	Thus, \(g_k\in \gamma_{p^{n-1}-i}(W_n)\cap B_k \le Z_{i}(W_n)\cap B_k\).
\end{proof}
\begin{corollary}\label{cor:upperlower}
	If \(g=g_1\dots g_n\in Z_i(W_n)\) with \(g_s\in B_s\) then \(g_s\in Z_i(W_n)\) for all \(s\). In particular
	\[Z_{i}(W_n)= (Z_{i}(W_n)\cap B_1) \cdots  (Z_{i}(W_n)\cap B_n).\]
\end{corollary}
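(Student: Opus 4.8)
The plan is to bootstrap the corollary from the preceding (unlabelled) lemma by peeling off the base-subgroup components of \(g\) one at a time, from the bottom index upward, using only that \(Z_i(W_n)\) is a subgroup.

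First I would apply the lemma verbatim to \(g=g_1\cdots g_n\) (the instance \(k=1\)) to obtain \(g_1\in Z_i(W_n)\). Since \(Z_i(W_n)\) is a subgroup containing both \(g\) and \(g_1\), it contains \(g_1^{-1}g\); and by associativity \(g_1^{-1}g=g_2\cdots g_n\). This element has exactly the shape required by the lemma with \(k=2\), so a second application yields \(g_2\in Z_i(W_n)\), whence \(g_3\cdots g_n=g_2^{-1}(g_2\cdots g_n)\in Z_i(W_n)\). Iterating, I would run a finite induction on \(s=1,\dots,n\) with the hypothesis \(g_s\cdots g_n\in Z_i(W_n)\): the lemma supplies \(g_s\in Z_i(W_n)\), and multiplying by \(g_s^{-1}\) on the left advances the hypothesis to \(s+1\). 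After \(n\) steps every component satisfies \(g_s\in Z_i(W_n)\), and since \(g_s\in B_s\) by construction we get \(g_s\in Z_i(W_n)\cap B_s\) for all \(s\).

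For the displayed factorisation, the inclusion \(\supseteq\) is immediate because each factor \(Z_i(W_n)\cap B_s\) is contained in the subgroup \(Z_i(W_n)\), so their product lies in \(Z_i(W_n)\) as well. The reverse inclusion is precisely what the induction establishes: an arbitrary \(g\in Z_i(W_n)\), written in its unique form \(g=g_1\cdots g_n\) with \(g_s\in B_s\), has all its factors in \(Z_i(W_n)\cap B_s\), so \(g\) belongs to the product set on the right.

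I do not expect a genuine obstacle here; the argument is a clean descent built entirely on the lemma and the subgroup property. The only point deserving care is the bookkeeping in the semidirect-product decomposition — namely that stripping \(g_s\) really returns the tail \(g_{s+1}\cdots g_n\) in the correct triangular form, so that the lemma reapplies with index \(k=s+1\) — but this is guaranteed by associativity together with the uniqueness of the decomposition \(W_n=\rtimes_k B_k\).
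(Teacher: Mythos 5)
Your proposal is correct and is essentially the argument the paper intends: the corollary is stated without proof precisely because it follows by iterating the preceding lemma, peeling off \(g_1, g_2, \dots\) in turn using the subgroup property of \(Z_i(W_n)\), exactly as you do. Your handling of the factorisation (trivial inclusion \(\supseteq\), and \(\subseteq\) from the componentwise membership just established) matches the intended reading as well.
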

\begin{proof}[Proof of Theorem~\ref{thm:upperlower}]
	Notice that \[(Z_{i}(W_n)\cap B_1) \cdots  (Z_{i}(W_n)\cap B_n)= (\gamma_{p^{n-1}-i}(W_n)\cap B_1) \cdots  (\gamma_{p^{n-1}-i}(W_n)\cap B_n).\]
	Indeed, by Lemma~\ref{lem:upperlowerBn}, we have that
	\(
	(Z_{i}(W_n)\cap B_n)=\gamma_{p^{n-1}-i}(W_n)\cap B_n
	\)
	and the claim follows easily by arguing induction considering the quotient \(W_{n-1}=W_n/B_n\). 
	
	Finally, by Corollaries~\ref{cor:desc_satur} and~\ref{cor:upperlower}, we have
	\begin{multline*}
		Z_{i}(W_n)= (Z_{i}(W_n)\cap B_1) \cdots  (Z_{i}(W_n)\cap B_n)\\
		=(\gamma_{p^{n-1}-i}(W_n)\cap B_1) \cdots  (\gamma_{p^{n-1}-i}(W_n)\cap B_n)=
		\gamma_{p^{n-1}-i}(W_n).
		\qedhere
	\end{multline*}
\end{proof}

\section{Normal Subgroups of \(W_n\)}\label{sec:normal}
In this section we characterize the normal subgroups of \(W_n\) showing that they contain a term of the lower central series with bounded index.
\begin{lemma}\label{lem: max in N}
Let $N$ be a normal subgroup of \(W_n\) and $f\Delta_k\in N$. Every monomial element \(x^{\Lambda}\Delta_k\) of \(p\)-degree at most \(\pdeg(f\Delta_k)\) belongs to \(N\). 
\end{lemma}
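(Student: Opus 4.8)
The plan is to exploit two structural features of the base subgroup $B_k$. First, by Definition~\ref{def:pdeg} the quantity $\pdeg(x^\Lambda\Delta_k)=\pdeg(x^\Lambda)+\mu_k$ is, up to the shift $\mu_k$, the base-$p$ expansion of $x^\Lambda$, and since each $\lambda_i\in\{0,\dots,p-1\}$ the map $\Lambda\mapsto\pdeg(x^\Lambda\Delta_k)$ is injective onto $\{\mu_k,\dots,p^{n-1}-1\}$; I will write $m_d$ for the unique monomial element of $B_k$ of $p$-degree $d$, so that $m_{\mu_k}=\Delta_k$ and the lemma asks precisely that $m_d\in N$ for every $\mu_k\le d\le d_0$, where $d_0\defeq\pdeg(f\Delta_k)$ and $\lt(f\Delta_k)=c_0\,m_{d_0}$ with $c_0\in\F_p^*$. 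Second, $B_k$ is elementary abelian and normalized by $W_n$, so conjugation acts $\F_p$-linearly on it.

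First I would record this linearity carefully. For $w\in W_n$ the commutator $[g\Delta_k,w]$ again lies in $B_k$, and the identity $[g_1\Delta_k\,g_2\Delta_k,w]=[g_1\Delta_k,w]^{g_2\Delta_k}[g_2\Delta_k,w]$, combined with the fact that conjugation inside the abelian group $B_k$ is trivial, shows that $g\Delta_k\mapsto[g\Delta_k,w]$ is additive. Consequently, for any fixed word $w_1,\dots,w_\ell$ the iterated commutator $g\Delta_k\mapsto[g\Delta_k,w_1,\dots,w_\ell]$ is an $\F_p$-linear operator on $B_k$; by Equation~\eqref{rmk:commutators}, extended from monomials to their linear combinations, each application strictly lowers the $p$-degree of any element it does not annihilate, so the operator drops the $p$-degree by at least $\ell$.

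Next, for each target degree $d$ with $\mu_k\le d<d_0$ I would apply Corollary~\ref{cor:commutators} to the leading monomial $m_{d_0}$ with $\ell=d_0-d$ (which lies in the admissible range $1\le\ell\le\pdeg(x^{\Lambda_0})=d_0-\mu_k$), obtaining monic monomials $w_1,\dots,w_\ell$ with $\pdeg([m_{d_0},w_1,\dots,w_\ell])=d$, that is, with leading monomial $m_d$. Applying the same word to $f\Delta_k$ and using linearity, the contribution of $\lt(f\Delta_k)=c_0\,m_{d_0}$ is $c_0$ times an element of $p$-degree exactly $d$ with leading monomial $m_d$, whereas every other monomial of $f\Delta_k$ has $p$-degree at most $d_0-1$ and is therefore sent to something of $p$-degree at most $d-1$. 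Since $N\trianglelefteq W_n$ is closed under iterated commutators with elements of $W_n$, the element $g_d\defeq[f\Delta_k,w_1,\dots,w_\ell]$ lies in $N$ and has the form $g_d=c\,m_d+(\text{terms of }p\text{-degree}<d)$ with $c\in\F_p^*$; taking $g_{d_0}=f\Delta_k$ furnishes such an element for $d=d_0$ as well.

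Finally I would run an induction on $d$ increasing from $\mu_k$ to $d_0$. In the base case $d=\mu_k$ there are no monomials of smaller $p$-degree in $B_k$, so $g_{\mu_k}$ is a nonzero scalar multiple of $\Delta_k$ and hence $m_{\mu_k}=\Delta_k\in N$. For the inductive step, the lower-degree terms of $g_d$ are, by the inductive hypothesis, $\F_p$-combinations of monomial elements already known to lie in $N$; subtracting them from $g_d\in N$ leaves $c\,m_d\in N$, whence $m_d\in N$. I expect the main obstacle to be exactly this interplay of directions: the auxiliary elements $g_d$ are manufactured by commuting \emph{downwards} from the leading monomial, while the individual monomials must be isolated by stripping off lower-order terms \emph{upwards}. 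Keeping the $p$-degree bookkeeping clean—so that the contaminating monomials of $f\Delta_k$ are always pushed strictly below the current target $d$—is the delicate point, and it rests entirely on the injectivity of $\pdeg$ on the monomial elements of $B_k$.
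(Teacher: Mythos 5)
Your proof is correct and follows essentially the same route as the paper's: both hinge on Lemma~\ref{lem:pdeg-1} (via its iterate, Corollary~\ref{cor:commutators}) to manufacture elements of $N$ of every lower $p$-degree, on the $\F_p$-linearity of commutation on the elementary abelian $B_k$, and on stripping off lower-degree monomials by induction, the only difference being bookkeeping (the paper runs a single downward induction on $\pdeg(f\Delta_k)$, while you build the family $g_d$ and induct upward on the target degree $d$). One small correction: your opening claim that $[g\Delta_k,w]\in B_k$ for \emph{every} $w\in W_n$ is false (e.g.\ $[\Delta_1,x_1\Delta_2]=-\Delta_2\in B_2$); it holds---and is all you actually use---when $w$ lies in a base subgroup $B_j$ with $j<k$, as the monic monomials supplied by Corollary~\ref{cor:commutators} do.
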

\begin{proof}
We argue by induction on the \(p\)-degree of \(f\Delta_k\). The base of the induction is when \(f\) is constant of \(p\)-degree minimum possible  \(\mu_k\). In this case, the claim is trivial. Otherwise \(t=\pdeg(f\Delta_k)\geq \mu_k\). By Lemma~\ref{lem:pdeg-1} applied to the leading term \(x^{\Lambda}\Delta_k\) of \(f\Delta_k\), there exists an element in \(N\) of \(p\)-degree equals to \(t-1\). By induction, \(N\cap B_k\) contains every monomial element of \(p\)-degree at most \(t-1\). In particular, \(f\Delta_k-x^{\Lambda}\Delta_k\) lies in \(N\) and so also \(x^{\Lambda}\Delta_k\in N\), proving the statement.
\end{proof}

\begin{lemma}\label{lem:splitnormal}
If \(N\trianglelefteq W_n\), then \((N\cap B_k)(N\cap B_{k+1})\cdots (N\cap B_n) \trianglelefteq W_n\) for all \(1\le k\le n\).
\end{lemma}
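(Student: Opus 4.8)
The plan is to prove directly that \(M_k:=(N\cap B_k)(N\cap B_{k+1})\cdots(N\cap B_n)\) is a subgroup stable under conjugation, rather than to identify it with \(N\cap(B_kB_{k+1}\cdots B_n)\). That identification is tempting, since \(B_kB_{k+1}\cdots B_n\) is normal (being the kernel of the projection \(W_n\to W_{k-1}\)) and an intersection of normal subgroups is normal; but it is false in general, because \(N\) need not be saturated, so \(M_k\) can be strictly smaller than \(N\cap(B_kB_{k+1}\cdots B_n)\). The whole argument will instead rest on two facts: the triangular behaviour of commutators, \([B_s,B_{s'}]\subseteq B_{\max(s,s')}\) (which follows from \eqref{taylor} together with \([a,b]^{-1}=[b,a]\) and the fact that each \(B_s\) is abelian), and the normality of \(N\), which guarantees that a commutator of an element of \(N\) with anything stays in \(N\).

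First I would check that \(M_k\) is a subgroup. For \(k\le s,s'\le n\) the two facts above give \([N\cap B_s,\,N\cap B_{s'}]\subseteq N\cap B_{\max(s,s')}\), i.e.\ the commutator of any two of the factors lands inside one of the factors. A standard computation then shows \((N\cap B_{s'})(N\cap B_s)\subseteq (N\cap B_s)(N\cap B_{s'})\) for \(s<s'\), so the product \(M_k\), taken in increasing order of the index, is closed under multiplication and inversion; hence it is a subgroup, generated by \(\bigcup_{s=k}^n (N\cap B_s)\).

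For normality it suffices to prove that \(a^w\in M_k\) for every \(a\in N\cap B_s\) with \(s\ge k\) and every \(w\in W_n\). Writing \(w=f_n\Delta_n\cdots f_1\Delta_1\) as a product of at most \(n\) base-subgroup elements, I would induct on the number of these factors. The inductive engine is the single step: if \(C\) lies in \(N\cap B_j\) with \(j\ge k\) and \(v\in B_\ell\), then
\[
C^{\,v}=C\,[C,v],\qquad [C,v]\in N\cap B_{\max(j,\ell)},
\]
since \([C,v]\in[B_j,B_\ell]\subseteq B_{\max(j,\ell)}\) and \([C,v]\in N\) by normality of \(N\); note \(\max(j,\ell)\ge j\ge k\). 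Thus conjugation by one base-subgroup element turns an element of \(N\cap B_j\) into a product of two elements, each again contained in some \(N\cap B_{j'}\) with \(j'\ge k\). Iterating over the factors of \(w\), the conjugate \(a^w\) is exhibited as a product of elements each lying in some \(N\cap B_{j'}\subseteq M_k\) with \(j'\ge k\); as \(M_k\) is a subgroup, \(a^w\in M_k\). Hence \(M_k^{\,w}\subseteq M_k\) for all \(w\), which gives \(M_k\trianglelefteq W_n\).

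The step that requires the most care is the bookkeeping in the induction: one must track that the base-subgroup index never decreases under conjugation (so that nothing ever escapes the range \(\{k,\dots,n\}\)) and that membership in \(N\) is preserved at every commutator. Both are exactly guaranteed by the triangular commutator law and the normality of \(N\); no control of \(p\)-degrees is actually needed here, although Lemma~\ref{lem: max in N} may be invoked to describe each \(N\cap B_s\) explicitly as the span of the monomial elements \(x^\Lambda\Delta_s\in N\), which makes the saturated structure of \(M_k\) transparent.
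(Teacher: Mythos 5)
Your proof is correct and takes essentially the same route as the paper: the paper's (much terser) argument rests on exactly your two facts --- normality of \(N\) and the triangular commutator law \([B_s,B_{s'}]\subseteq B_{\max(s,s')}\) --- applied to the generators \(f\Delta_h\in N\), \(h\ge k\), of the product, so that each commutator with a generator \(x^\Lambda\Delta_s\) of \(W_n\) lands in \(N\cap B_{\max(s,h)}\). Your extra bookkeeping (checking that the product is a subgroup via the rearrangement \((N\cap B_{s'})(N\cap B_s)\subseteq(N\cap B_s)(N\cap B_{s'})\), and the induction over the base-subgroup factors of \(w\)) just makes explicit what the paper leaves implicit.
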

\begin{proof}
	Since the elements in \(N\) of the form \(f\Delta_h\), where \(h\ge k\), generate  \((N\cap B_k)(N\cap B_{k+1})\cdots (N\cap B_n)\), it suffices to note  that the commutator \([f\Delta_h, x^\Lambda\Delta_s]\) belongs to \( N\cap B_\ell\) for each generator  \(x^\Lambda\Delta_s\) of \( W_n\), where \(\ell= \max(s,h)\). 
\end{proof}

\begin{lemma}
	Let \(N\) be the normal closure of \(\<f\Delta_k\>\). Then 
	\begin{equation*}
		N= (N\cap B_k)(N\cap B_{k+1})\cdots (N\cap B_n).
	\end{equation*} 
\end{lemma}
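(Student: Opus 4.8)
The plan is to set $M \defeq (N\cap B_k)(N\cap B_{k+1})\cdots(N\cap B_n)$ and prove $N=M$ by establishing the two inclusions separately. The inclusion $M\le N$ is immediate: each factor $N\cap B_i$ is a subgroup of $N$, and since $N$ is closed under multiplication their product lies in $N$. The whole content therefore rests on the reverse inclusion $N\le M$, which I would obtain by exploiting the defining minimality property of the normal closure.

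For the reverse inclusion, I would first apply Lemma~\ref{lem:splitnormal} to the normal subgroup $N$; this tells us directly that $M$ is itself normal in $W_n$. Next, since $N$ is by definition the normal closure of $\langle f\Delta_k\rangle$, we have $f\Delta_k\in N$, and by hypothesis $f\Delta_k\in B_k$, so that $f\Delta_k\in N\cap B_k\le M$. Thus $M$ is a normal subgroup of $W_n$ that contains the generating element $f\Delta_k$. By the very definition of normal closure — the smallest normal subgroup of $W_n$ containing $f\Delta_k$ — it follows that $N\le M$. Combining this with $M\le N$ yields $N=M$, as claimed.

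I do not expect a genuine obstacle here, since all the real work is already packaged into Lemma~\ref{lem:splitnormal}, which I am free to invoke. The only point that requires any care is the verification $f\Delta_k\in M$, and this is exactly where the hypothesis $f\Delta_k\in B_k$ is used: because the single generator lies entirely inside one base subgroup, it automatically sits in $N\cap B_k$ and hence in $M$. Were the generator instead a product of components spread across several base subgroups, this short argument would no longer place it inside $M$ so directly, and one would have to argue more carefully; the strength of the present statement comes precisely from restricting to a generator supported in a single $B_k$.
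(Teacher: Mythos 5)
Your proof is correct and follows exactly the paper's own argument: the easy inclusion $(N\cap B_k)\cdots(N\cap B_n)\le N$ combined with Lemma~\ref{lem:splitnormal}, the observation that $f\Delta_k\in N\cap B_k$, and the minimality of the normal closure. Nothing to add.
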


\begin{proof}
	On the one hand, note that 
	\(	N\ge (N\cap B_k)(N\cap B_{k+1})\cdots (N\cap B_n)
	\). On the other hand, by Lemma~\ref{lem:splitnormal}, \((N\cap B_k)(N\cap B_{k+1})\cdots (N\cap B_n)\) is a normal subgroup of \(W_n\) containing \(f\Delta_k\), hence it contains its normal closure \(N\). Thus we have the equality.
\end{proof}

\begin{lemma}\label{lem:cyclic_commutator_group}
	If \(1\ne x^\Lambda\Delta_k\in \gamma_{t}(W_n)\setminus \gamma_{t+1}(W_n)\), then \[
	[\<x^\Lambda\Delta_k \>, W_n] =  \bigl(\gamma_{t+1}(W_n) \cap B_{k}\bigr)\bigl(\gamma_{p^{k-1}+1}(W_n) \cap (B_{k+1}\cdots B_n)\bigr).
	\]
\end{lemma}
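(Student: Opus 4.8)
The plan is to set $a=x^\Lambda\Delta_k$ and $N=[\langle a\rangle,W_n]$, and to prove the two inclusions after pinning down the $p$-degrees involved. Since $a\in\gamma_t(W_n)\setminus\gamma_{t+1}(W_n)$, Lemma~\ref{lem:gammasuiB} forces $\pdeg(a)=p^{n-1}-t$, whence $\pdeg(x^\Lambda)=p^{k-1}-t$ and in particular $t\le p^{k-1}$, so that $\gamma_{p^{k-1}+1}(W_n)\le\gamma_{t+1}(W_n)$. Because $\langle\langle a\rangle,W_n\rangle=W_n$, the subgroup $N$ is normal in $W_n$; and as $a\in\gamma_t(W_n)$ we have $N\le[\gamma_t(W_n),W_n]=\gamma_{t+1}(W_n)$. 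Write $M$ for the claimed right-hand side $(\gamma_{t+1}(W_n)\cap B_k)\bigl(\gamma_{p^{k-1}+1}(W_n)\cap(B_{k+1}\cdots B_n)\bigr)$.

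The technical core is a single $p$-degree estimate for the commutators of $a$ against the monomial generators $x^\Theta\Delta_s$ of $W_n$. For $s<k$ the commutator lies in $B_k$ and, by~\eqref{rmk:commutators}, has $p$-degree at most $\pdeg(a)-1=p^{n-1}-t-1$, so it lands in $\gamma_{t+1}(W_n)\cap B_k$. For $s>k$ the commutator lies in $B_s$, and using $\pdeg([x^\Theta\Delta_s,a])=\pdeg\bigl(\tfrac{\partial x^\Theta}{\partial x_k}x^\Lambda\Delta_s\bigr)$ I would bound the right-hand side: after reducing modulo the relations $x_i^p=x_i$, the factor $x^\Lambda$ touches only the variables $x_1,\dots,x_{k-1}$ while differentiation pins the $x_k$-exponent at $\theta_k-1\le p-2$; hence every resulting monomial has $x_k$-exponent at most $p-2$ and all other exponents at most $p-1$, giving $p$-degree at most $(p^{s-1}-1)-p^{k-1}$, i.e.\ $p$-degree of the commutator at most $p^{n-1}-p^{k-1}-1$. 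Thus each such commutator lies in $\gamma_{p^{k-1}+1}(W_n)\cap B_s$. The same computation shows the bound is sharp: taking $x^\Theta=\prod_{i<k}x_i^{p-1-\lambda_i}\cdot\prod_{k\le i<s}x_i^{p-1}$ produces, with no reduction needed, the single monomial $x_1^{p-1}\cdots x_{k-1}^{p-1}x_k^{p-2}x_{k+1}^{p-1}\cdots x_{s-1}^{p-1}$, so $[x^\Theta\Delta_s,a]$ attains $p$-degree exactly $p^{n-1}-p^{k-1}-1$. I expect this exponent bookkeeping to be the main obstacle, since one must track the $x_i^p=x_i$ reductions carefully to obtain both the upper bound and its attainment.

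With these estimates in hand, both inclusions follow. First I would check $M\trianglelefteq W_n$: the higher factor $\gamma_{p^{k-1}+1}(W_n)\cap(B_{k+1}\cdots B_n)$ is normal by Lemma~\ref{lem:splitnormal}, and conjugating a generator $x^\Theta\Delta_k$ of the lower factor by any $w\in W_n$ produces a $B_k$-part staying in $\gamma_{t+1}(W_n)\cap B_k$ together with $B_s$-parts ($s>k$) which, by the estimate applied now to $x^\Theta$, lie in $\gamma_{p^{k-1}+1}(W_n)\cap B_s$; hence $M$ is stable under conjugation. For $N\subseteq M$: writing an arbitrary $w\in W_n$ as a product of monomial generators and expanding $[a,w]$ as a product of $W_n$-conjugates of the commutators $[a,x^\Theta\Delta_s]$, each of which lies in $M$ by the estimate, normality of $M$ gives $[a,w]\in M$; since $N$ is generated by such commutators and $M$ is normal, $N\subseteq M$.

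For the reverse inclusion $M\subseteq N$ I would exhibit, inside $N$, one element of $B_k$ and one element of each $B_s$ ($s>k$) of maximal $p$-degree, then saturate downward via Lemma~\ref{lem: max in N}. In $B_k$, Lemma~\ref{lem:pdeg-1} applied to $a$ yields a monic monomial $w$ with $[a,w]\in B_k$ of $p$-degree $\pdeg(a)-1=p^{n-1}-t-1$; since $N$ is normal, Lemma~\ref{lem: max in N} then places every monomial of $B_k$ of $p$-degree at most $p^{n-1}-t-1$ into $N$, which by Lemma~\ref{lem:gammasuiB} is precisely $\gamma_{t+1}(W_n)\cap B_k$. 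In $B_s$ for $s>k$, the sharp commutator $[x^\Theta\Delta_s,a]\in N$ of $p$-degree $p^{n-1}-p^{k-1}-1$ feeds into Lemma~\ref{lem: max in N} and Lemma~\ref{lem:gammasuiB} to give $\gamma_{p^{k-1}+1}(W_n)\cap B_s\subseteq N$. As $N$ is a subgroup containing all these factors, $M\subseteq N$, and together with $N\subseteq M$ this proves $N=M$, as claimed.
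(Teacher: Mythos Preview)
Your proof is correct and follows essentially the same strategy as the paper: both establish the two inclusions, and for $M\subseteq N$ both exhibit the sharp commutator $[x^\Lambda\Delta_k,\,x_1^{p-1-\lambda_1}\cdots x_{h-1}^{p-1-\lambda_{h-1}}\Delta_h]$ of $p$-degree $\mu_k-1=p^{n-1}-p^{k-1}-1$ and then invoke Lemma~\ref{lem: max in N}. The only difference is cosmetic: the paper declares the inclusion $N\subseteq M$ ``trivial'' without further comment, whereas you carry out the $p$-degree bookkeeping for $[a,x^\Theta\Delta_s]$ and verify that $M$ is normal in $W_n$; this extra care is justified and does not change the underlying argument.
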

\begin{proof}
	The inclusion  \([\<x^\Lambda\Delta_k \>, W_n] \le\bigl(\gamma_{t+1}(W_n) \cap B_{k}\bigr)\bigl(\gamma_{p^{k-1}+1}(W_n) \cap (B_{k-1}\cdots B_n)\bigr) \) is trivial. In order to prove the opposite inclusion,  consider  the monomial element \(x^\Theta\Delta_h\in \bigl(\gamma_{t+1}(W_n) \cap B_{k}\bigr)\bigl(\gamma_{p^{k-1}+1}(W_n) \cap (B_{k-1}\cdots B_n)\bigr)  \). Let us first analyze the case \(h=k\). We know that \(\pdeg(x^\Lambda\Delta_k)= \pdeg(x^\Lambda)+\mu_k= p^{n-1}-t\) and \(\pdeg(x^\Theta\Delta_k)=\pdeg(x^\Theta)+\mu_k\le p^{n-1}-t-1 =\pdeg(x^\Lambda\Delta_k)-1\). By Lemma~\ref{lem: max in N}, \(\gamma_{t+1}(W_n) \cap B_{k} \leq [\<x^\Lambda\Delta_k \>, W_n]\).

	 If \(h>k\) it suffices to consider the commutator \[[x^\Lambda\Delta_k,x_1^{p-1-\lambda_1}\cdots x_{h-1}^{p-1-\lambda_{h-1}}\Delta_h]\] which has \(\pdeg\) equal to \(\mu_k-1\) and apply Lemma~\ref{lem: max in N} as above.
\end{proof}

\begin{proposition}\label{prop:normalclosuremonomial}
	The normal closure of the subgroup of \(W_n\) generated by \(x^\Lambda\Delta_k\) is 
	\[\<x^\Lambda\Delta_k\>[\<x^\Lambda\Delta_k\>, W_n]= \<x^\Lambda\Delta_k\>\bigl(\gamma_{t+1}(W_n) \cap B_{k}\bigr)\bigl(\gamma_{p^{k-1}+1}(W_n) \cap (B_{k+1}\cdots B_n)\bigr),\] 
	where \(t=\pdeg(x^\Lambda\Delta_k)\).
\end{proposition}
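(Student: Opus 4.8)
The plan is to reduce the statement to the general fact that the normal closure of a \emph{cyclic} (hence abelian) subgroup $H=\langle g\rangle$ of an arbitrary group $G$ equals the product $H[H,G]$, and then to substitute the explicit description of $[\langle x^\Lambda\Delta_k\rangle,W_n]$ already furnished by Lemma~\ref{lem:cyclic_commutator_group}. In this way no new commutator computation in $W_n$ is needed: the whole content of the proposition splits into a purely group-theoretic identity and a direct appeal to the previous lemma.

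First I would establish the general fact. Let $\bar H$ denote the normal closure of $H=\langle g\rangle$ in $G$, that is, the subgroup generated by the set $S=\{g^w\mid w\in G\}$; since $S$ is closed under conjugation, $\bar H$ is normal in $G$. Writing $g^w=g[g,w]$ shows $S\subseteq H[H,G]$. The crucial point is that $H[H,G]$ is already a subgroup: because $H$ is abelian, for $h,h'\in H$ and $w\in G$ one has $[h,w]^{h'}=[h,w^{h'}]\in[H,G]$, so $H$ normalizes $[H,G]$ and the product $H[H,G]=[H,G]H$ is closed under multiplication and inversion. Consequently $\langle H,[H,G]\rangle=H[H,G]$, and from $S\subseteq H[H,G]$ we get $\bar H\subseteq H[H,G]$. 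For the reverse inclusion, note that $\bar H$ is normal and contains $g$, hence contains $H$ and therefore $[H,G]=[\langle g\rangle,G]\subseteq[\bar H,G]\subseteq\bar H$; thus $\bar H\supseteq H[H,G]$, and the two inclusions give $\bar H=H[H,G]$.

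Applying this with $G=W_n$ and $g=x^\Lambda\Delta_k\neq 1$ shows that the normal closure of $\langle x^\Lambda\Delta_k\rangle$ equals $\langle x^\Lambda\Delta_k\rangle[\langle x^\Lambda\Delta_k\rangle,W_n]$, which is the first asserted equality. The second equality is then immediate: substituting the value of $[\langle x^\Lambda\Delta_k\rangle,W_n]$ computed in Lemma~\ref{lem:cyclic_commutator_group} replaces this commutator subgroup by $(\gamma_{t+1}(W_n)\cap B_k)(\gamma_{p^{k-1}+1}(W_n)\cap(B_{k+1}\cdots B_n))$, yielding the displayed right-hand side. I expect the only genuine subtlety to be the verification that $H[H,W_n]$ is an honest subgroup, rather than merely a generating set whose normal closure requires higher iterated commutators $[H,W_n,W_n],\dots$; this is exactly where the hypothesis that $H$ is cyclic, and so abelian, enters, through the identity $[h,w]^{h'}=[h,w^{h'}]$. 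Everything else is a formal rewriting using Lemma~\ref{lem:cyclic_commutator_group}.
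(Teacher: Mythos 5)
Your proof is correct, but it reaches the first equality by a genuinely different route than the paper. The paper's one-line proof is structure-specific: it combines Lemma~\ref{lem:cyclic_commutator_group} with Lemma~\ref{lem:splitnormal} to conclude that \([\langle x^\Lambda\Delta_k\rangle,W_n]\) is itself \emph{normal in \(W_n\)}, and only then does the standard argument (a subgroup times a normal subgroup is a subgroup, it is closed under conjugation, and it lies in every normal subgroup containing the generator) yield the claim. You instead isolate the abstract identity \(\langle g\rangle^G=\langle g\rangle[\langle g\rangle,G]\) in an arbitrary group, using only that \(\langle g\rangle\) normalizes \([\langle g\rangle,G]\) --- not that \([\langle g\rangle,G]\trianglelefteq G\) --- so your first equality is purely formal, independent of Lemma~\ref{lem:splitnormal} and of any commutator computation in \(W_n\), with Lemma~\ref{lem:cyclic_commutator_group} entering only as a substitution at the end. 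Incidentally, your abelianness hypothesis can be dropped: for any subgroup \(H\le G\), \(h,h'\in H\) and \(w\in G\) one has \([h,w]^{h'}=[h^{h'},w^{h'}]\in[H,G]\) simply because \(h^{h'}\in H\); in fact \(H^G=H[H,G]\) (and even \([H,G]\trianglelefteq G\), via \([h,w]^{v}=[h,v]^{-1}[h,wv]\)) is a classical fact valid for every subgroup \(H\). As for what each route buys: yours is self-contained and portable to any group; the paper's, given the machinery already developed, is shorter and records the additional useful fact that the commutator subgroup \([\langle x^\Lambda\Delta_k\rangle,W_n]\) is normal in \(W_n\), in keeping with the section's analysis of saturated normal subgroups. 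One caveat you inherit from the statement itself (not a gap in your argument): the proposition sets \(t=\pdeg(x^\Lambda\Delta_k)\), while in Lemma~\ref{lem:cyclic_commutator_group} the index \(t\) is the lower-central depth, i.e.\ \(x^\Lambda\Delta_k\in\gamma_t(W_n)\setminus\gamma_{t+1}(W_n)\), equivalently \(t=p^{n-1}-\pdeg(x^\Lambda\Delta_k)\); your final substitution is literal only under the lemma's reading of \(t\).
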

\begin{proof}
	Since by Lemmas~\ref{lem:splitnormal} and \ref{lem:cyclic_commutator_group} the subgroup \([\<x^\Lambda\Delta_k\>, W_n]\) is normal in \(W_n\), the claim follows.
	%
	%
\end{proof}
The proposition above together with Lemma~\ref{lem: max in N} give the following results.
\begin{corollary}\label{cor:normal closure fdeltak}
	The normal closure of the subgroup of \(W_n\) generated by \(f\Delta_k\) is the saturated subgroup 
	\[\<f\Delta_k\>[\<f\Delta_k\>, W_n]= \<f\Delta_k\>\bigl(\gamma_{t+1}(W_n) \cap B_{k}\bigr)\bigl(\gamma_{p^{k-1}+1}(W_n) \cap (B_{k+1}\cdots B_n)\bigr),\] 
	where \(t=\pdeg(f\Delta_k)\). 
\end{corollary}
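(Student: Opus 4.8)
The plan is to reduce the statement to the monomial case settled in Proposition~\ref{prop:normalclosuremonomial} by passing to the leading term. Let \(v=\lt(f\Delta_k)=c\,x^\Lambda\Delta_k\) with \(c\in\F_p^*\), so that \(\<v\>=\<x^\Lambda\Delta_k\>\) and \(\pdeg(x^\Lambda\Delta_k)=\pdeg(f\Delta_k)=t\). By Proposition~\ref{prop:normalclosuremonomial} the normal closure of \(\<x^\Lambda\Delta_k\>\) is \(\<x^\Lambda\Delta_k\>\,C\), where
\[
C=\bigl(\gamma_{t+1}(W_n)\cap B_k\bigr)\bigl(\gamma_{p^{k-1}+1}(W_n)\cap(B_{k+1}\cdots B_n)\bigr)=[\<x^\Lambda\Delta_k\>,W_n]
\]
is a normal subgroup of \(W_n\). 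The heart of the argument is the claim that \(f\Delta_k\) and its leading term lie in the same coset of \(C\), so that \(\<f\Delta_k\>\,C=\<x^\Lambda\Delta_k\>\,C\); granting this, the normal closure of \(\<f\Delta_k\>\) is forced to coincide with that of \(\<x^\Lambda\Delta_k\>\).

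To establish the claim I would note that, since \(B_k\) is abelian, \(g\Delta_k:=v^{-1}f\Delta_k\) is obtained from \(f\) by deleting its leading monomial and therefore has \(p\)-degree at most \(\pdeg(x^\Lambda\Delta_k)-1\). By Lemma~\ref{lem:gammasuiB}, the factor \(\gamma_{t+1}(W_n)\cap B_k\) of \(C\) is precisely the span of the monomial elements of \(B_k\) of \(p\)-degree at most \(\pdeg(x^\Lambda\Delta_k)-1\); hence \(g\Delta_k\in C\), and \(f\Delta_k=v\,g\Delta_k\) lies in the coset \(v\,C\). Since \(C\) is normal this yields \(\<f\Delta_k\>\,C=\<v\>\,C=\<x^\Lambda\Delta_k\>\,C\), the common value being the normal closure of \(\<x^\Lambda\Delta_k\>\) by Proposition~\ref{prop:normalclosuremonomial}. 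In particular this subgroup is normal and contains \(f\Delta_k\), so it contains the normal closure \(M\) of \(\<f\Delta_k\>\).

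For the reverse inclusion I would invoke Lemma~\ref{lem: max in N}: as \(M\trianglelefteq W_n\) and \(f\Delta_k\in M\), every monomial element of \(B_k\) of \(p\)-degree at most \(\pdeg(f\Delta_k)=t\) lies in \(M\); in particular \(x^\Lambda\Delta_k\in M\). Normality of \(M\) then forces \(\<x^\Lambda\Delta_k\>\,C\subseteq M\), whence \(\<f\Delta_k\>\,C\subseteq M\). Combining the inclusions gives \(M=\<f\Delta_k\>\,C\); since \([\<f\Delta_k\>,W_n]\) is normal in \(W_n\) (being a commutator subgroup \([H,G]\) with \(H\le G\)), the normal closure \(M\) also equals \(\<f\Delta_k\>[\<f\Delta_k\>,W_n]\), which is the displayed identity. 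Saturation then reads off from the explicit form: the \(B_k\)-component \(\<x^\Lambda\Delta_k\>(\gamma_{t+1}(W_n)\cap B_k)\) is the span of all monomial elements of \(B_k\) of \(p\)-degree at most \(t\) (there being a unique monomial of each \(p\)-degree), the components in \(B_{k+1},\dots,B_n\) are the saturated pieces \(\gamma_{p^{k-1}+1}(W_n)\cap B_j\), and the subgroup splits as their product by Corollary~\ref{cor:desc_satur}.

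The step I expect to be most delicate is the reduction itself, that is, verifying \(g\Delta_k\in C\). Its content is a bookkeeping between the \(p\)-degree filtration of \(B_k\) and the lower central series, supplied by Lemma~\ref{lem:gammasuiB}: one must confirm that erasing the leading monomial strictly lowers the \(p\)-degree and that the remainder lands in the \(B_k\)-part of \(C\) rather than in a lower base subgroup — which is automatic here, since every monomial of \(f\Delta_k\) already lies in \(B_k\). Everything else is then standard manipulation with the normal subgroup \(C\) together with the two black boxes Proposition~\ref{prop:normalclosuremonomial} and Lemma~\ref{lem: max in N}.
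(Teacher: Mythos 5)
Your proposal is correct and takes essentially the same approach as the paper: the paper's entire proof of this corollary is the remark that Proposition~\ref{prop:normalclosuremonomial} together with Lemma~\ref{lem: max in N} give the result, and your argument is precisely that reduction made explicit (the leading term lies in the normal closure by Lemma~\ref{lem: max in N}, and the rest of $f\Delta_k$ is absorbed into the normal subgroup $[\langle x^\Lambda\Delta_k\rangle, W_n]$ via a coset argument). The one wrinkle --- your appeal to Lemma~\ref{lem:gammasuiB} to identify $\gamma_{t+1}(W_n)\cap B_k$ with the span of the monomial elements of $p$-degree at most $\pdeg(x^\Lambda\Delta_k)-1$ is valid only if $t$ is read as the filtration index $p^{n-1}-\pdeg(x^\Lambda\Delta_k)$, as in Lemma~\ref{lem:cyclic_commutator_group}, rather than as $\pdeg(f\Delta_k)$ as the statement literally declares --- is a notational inconsistency already present in the paper's own statements of Proposition~\ref{prop:normalclosuremonomial} and of this corollary, so it is not a defect of your reasoning.
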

\begin{proposition}
	Let \(g=f\Delta_k\cdot h\), with \(h\in B_{k+1}\cdots B_n\). The normal closure \(\langle g\rangle^{W_n}\) contains \(\gamma_{p^{k-1}+1}(W_n)\).
\end{proposition}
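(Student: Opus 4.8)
\emph{Setup and reduction.} Write $N=\langle g\rangle^{W_n}$ and $\Gamma=\gamma_{p^{k-1}+1}(W_n)$, and assume $f\Delta_k\neq 1$. By Lemma~\ref{lem:gammasuiB} the smallest $p$-degree occurring in $B_j$ is $\mu_j$, so $\Gamma\cap B_j$ is trivial for $j\le k$ and, for $j>k$, is spanned by all monomials of $p$-degree at most $\mu_k-1=p^{n-1}-p^{k-1}-1$. In particular $\Gamma\subseteq B_{k+1}\cdots B_n$, and by Corollary~\ref{cor:desc_satur} we have $\Gamma=\prod_{s=k+1}^{n}(\Gamma\cap B_s)$; hence it suffices to prove $\Gamma\cap B_s\le N$ for every $s>k$. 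Since $B_s\trianglelefteq W_n$, the subgroup $N\cap B_s$ is normal in $W_n$, so by Lemma~\ref{lem: max in N} it is spanned by all monomials of $p$-degree up to its top $p$-degree. Thus the whole claim reduces to: \emph{for each $s$ with $k<s\le n$, the group $N$ contains a single element of $B_s$ of $p$-degree exactly $\mu_k-1$.}

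\emph{The basic commutator.} Let $x^\Lambda$ be the leading monomial of $f$ and, following Lemma~\ref{lem:cyclic_commutator_group}, set $w_s=x_1^{p-1-\lambda_1}\cdots x_{s-1}^{p-1-\lambda_{s-1}}\Delta_s$. The computation in that lemma shows that $[f\Delta_k,w_s]\in B_s$ and that its leading term is, up to a nonzero scalar, the unique monomial $x_1^{p-1}\cdots x_{k-1}^{p-1}x_k^{p-2}x_{k+1}^{p-1}\cdots x_{s-1}^{p-1}\Delta_s$ of $p$-degree $\mu_k-1$. I would then feed $w_s$ into $g$: using $[ab,c]=[a,c]^b[b,c]$,
\[
[g,w_s]=[f\Delta_k,w_s]\cdot[f\Delta_k,w_s,h]\cdot[h,w_s],
\]
where all three factors lie in $B_s B_{s+1}\cdots B_n$. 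The plan is to show that, apart from the single leading term of $[f\Delta_k,w_s]$, every monomial occurring here has $p$-degree strictly below $\mu_k-1$, and then to remove the contributions in the base subgroups above $B_s$ by a downward induction on $s$.

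\emph{Controlling the interference (the crux).} The main difficulty is that the tail $h$ may carry monomials of very high $p$-degree, so a priori $[h,w_s]$ could reach or exceed $p$-degree $\mu_k-1$ and cancel the leading term or pollute the higher base subgroups. The factor $[f\Delta_k,w_s,h]$ is harmless, since it is a commutator of $[f\Delta_k,w_s]$ and hence has $p$-degree $<\mu_k-1$ by~\eqref{rmk:commutators}. For $[h,w_s]$ the key is a degree-deficiency count: by~\eqref{taylor} every elementary commutator $[h_j\Delta_j,w_s]$ has the shape $\sum_{i\ge 1}\tfrac1{i!}\,\partial_{x_\ell}^{\,i}(\,\cdot\,)\,(\,\cdot\,)^i\Delta_m$, in which one differentiates with respect to a variable $x_\ell$ \emph{while the other factor does not involve $x_\ell$} (indeed $h_j\in B_j$ is independent of $x_j$, and $w_s\in B_s$ is independent of $x_s$). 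Consequently that variable keeps exponent at most $p-2$, so the resulting $p$-degree is at most $(p^{n-1}-1)-p^{\ell-1}$. Every index $\ell$ involved satisfies $\ell>k$, so $p^{\ell-1}\ge p^{k}>p^{k-1}$, and therefore each such contribution has $p$-degree strictly less than $p^{n-1}-1-p^{k-1}=\mu_k-1$. Hence $[g,w_s]$ equals the extremal generator of $\Gamma\cap B_s$ times an element all of whose monomials have $p$-degree $<\mu_k-1$.

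\emph{Conclusion by downward induction.} Running $s$ from $n$ down to $k+1$, write $[g,w_s]$ in normal form as $c_n\cdots c_{s+1}c_s$ with $c_m\in B_m$. By the deficiency count the components $c_m$ with $m>s$ have $p$-degree $<\mu_k-1$, so $c_m\in\Gamma\cap B_m$, which lies in $N$ by the inductive hypothesis; multiplying $[g,w_s]$ on the left by $(c_n\cdots c_{s+1})^{-1}\in N$ therefore produces the pure element $c_s\in N\cap B_s$, whose $p$-degree is exactly $\mu_k-1$ (its leading term being the surviving extremal monomial). By the reduction of the first paragraph this yields $\Gamma\cap B_s\le N$, which closes the induction (the base case $s=n$ is immediate, as then $[g,w_n]$ is already pure in $B_n$) and gives $\Gamma\le N$. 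The only genuinely delicate point is the deficiency count of the third paragraph; everything else is bookkeeping with the normal form and the established description of the lower central series.
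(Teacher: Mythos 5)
Your argument is correct and is essentially the paper's own proof: the same test elements \(w_s=x_1^{p-1-\lambda_1}\cdots x_{s-1}^{p-1-\lambda_{s-1}}\Delta_s\), the same downward induction on \(s\) stripping off the components in \(B_{s+1}\cdots B_n\) (already captured at earlier stages), and the same final appeal to Lemma~\ref{lem: max in N}; your ``deficiency count'' simply makes explicit the bound that the paper states tersely by comparison with the worst-case commutator \([x_1^{p-1-\lambda_1}\cdots x_{n-2}^{p-1-\lambda_{n-2}}\Delta_{n-1},x_1^{p-1}\cdots x_{n-1}^{p-1}\Delta_n]\). One harmless slip: \(B_s\) is \emph{not} normal in \(W_n\) for \(s<n\) (only the tails \(B_sB_{s+1}\cdots B_n\) are), but your reduction is unaffected, since Lemma~\ref{lem: max in N} applies directly to the normal subgroup \(N\) and any element of \(N\cap B_s\) of \(p\)-degree \(\mu_k-1\), which is all you use.
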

\begin{proof}
	If \(h=1\) the statement follows by Corollary~\ref{cor:normal closure fdeltak}.If \(h\neq 1\), let \(x^\Lambda\Delta_k\) be the leading term of \(f\Delta_k\). Observe that \([x_1^{p-1-\lambda_1}\dots x_{n-1}^{p-1-\lambda_{n-1}}\Delta_n,g]\) has \(p\)-degree \(p^{n-1}-p^{k-1}-1\) and so we can apply Lemma~\ref{lem: max in N} to get \(\gamma_{p^{k-1}+1}(W_n)\cap B_n\le \langle g\rangle^{W_n}\). 	Next, the commutator \([x_1^{p-1-\lambda_1}\cdots x_{n-2}^{p-1-\lambda_{n-2}}\Delta_{n-1},g]=s\Delta_n q\Delta_{n-1}\) is such that \[\pdeg(s\Delta_n)\le\pdeg([x_1^{p-1-\lambda_1}\cdots x_{n-2}^{p-1-\lambda_{n-2}}\Delta_{n-1},x_1^{p-1}\dots x_{n-1}^{p-1}\Delta_n])\le p^{n-1}-p^{k-1}-1.\]
	It follows that \(s\Delta_n \in \langle g\rangle^{W_n}\) by the argument above. In particular, \(q\Delta_{n-1}\in \langle g\rangle^{W_n}\) and has \(p\)-degree equal to \(p^{n-2}-p^{k-1}-1\). Thus, by Lemma~\ref{lem: max in N}, \(\gamma_{p^{k-1}+1}(W_n)\cap B_{n-1}\le\langle g\rangle^{W_n}\). The rest of the proof is obtained by iterating inductively this argument.
%
%
\end{proof}
A straightforward consequence is the following estimate.
\begin{corollary}
	If \(N\trianglelefteq W_n\) is a normal subgroup such that \(N\subseteq (B_k\cdots B_n)\setminus (B_{k+1}\cdots B_n)\), then, \(N\) contains \(\gamma_{p^{k-1}+1}(W_n)\) and 
	\[|N:\gamma_{p^{k-1}+1}(W_n)|\le \bigl(p^{p^{k-1}}\bigl)^{n-k+1}.\]
	In particular this index is bounded above by a function depending only on \(p\) and \(k\).
\end{corollary}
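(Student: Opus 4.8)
The plan is to combine the two preceding results. Let me restate what we are proving: if $N\trianglelefteq W_n$ satisfies $N\subseteq (B_k\cdots B_n)\setminus(B_{k+1}\cdots B_n)$, then $N\supseteq \gamma_{p^{k-1}+1}(W_n)$ and the index $|N:\gamma_{p^{k-1}+1}(W_n)|$ is bounded by $(p^{p^{k-1}})^{n-k+1}$.

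First I would extract the containment. The hypothesis $N\not\subseteq B_{k+1}\cdots B_n$ means $N$ contains some element $g$ whose first nontrivial component lies genuinely in $B_k$, i.e.\ $g=f\Delta_k\cdot h$ with $f\Delta_k\ne 1$ and $h\in B_{k+1}\cdots B_n$. This is exactly the hypothesis of the immediately preceding proposition, which gives $\gamma_{p^{k-1}+1}(W_n)\subseteq \langle g\rangle^{W_n}\subseteq N$, since $N$ is normal and contains $g$. So the containment part is essentially just invoking the prior proposition on a suitably chosen element of $N$.

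Second, I would bound the index. The clean way is to pass to the quotient $Q\defeq W_n/\gamma_{p^{k-1}+1}(W_n)$ and estimate $|N/\gamma_{p^{k-1}+1}(W_n)|$ by the order of the ambient group $(B_k\cdots B_n)/\gamma_{p^{k-1}+1}(W_n)$ that contains it. Since $N\subseteq B_k\cdots B_n$ and $N\supseteq \gamma_{p^{k-1}+1}(W_n)$, we have
\begin{equation*}
|N:\gamma_{p^{k-1}+1}(W_n)|\le |(B_k\cdots B_n):\gamma_{p^{k-1}+1}(W_n)|.
\end{equation*}
For each base subgroup $B_j$ with $k\le j\le n$, Lemma~\ref{lem:gammasuiB} describes $\gamma_{p^{k-1}+1}(W_n)\cap B_j$ as spanned by the monomials of $p$-degree at most $p^{n-1}-p^{k-1}-1$, so the surviving quotient $B_j/(\gamma_{p^{k-1}+1}(W_n)\cap B_j)$ is spanned by those monomials of $p$-degree strictly larger than $p^{n-1}-p^{k-1}-1$. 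Counting these monomials (their number is at most $p^{k-1}$, since the admissible $p$-degrees form a block of length $p^{k-1}$ just below $p^{n-1}$) gives $|B_j:\gamma_{p^{k-1}+1}(W_n)\cap B_j|\le p^{p^{k-1}}$. Multiplying over the $n-k+1$ indices $j=k,\dots,n$ yields the stated bound $(p^{p^{k-1}})^{n-k+1}$.

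The main obstacle I expect is purely bookkeeping in the index count: one must confirm that the range of surviving $p$-degrees in each $B_j$ really has length at most $p^{k-1}$, uniformly in $j$, so that each factor contributes at most $p^{p^{k-1}}$ and not something depending on $j$. This follows because Lemma~\ref{lem:gammasuiB} cuts $\gamma_{p^{k-1}+1}(W_n)\cap B_j$ at $p$-degree $p^{n-1}-p^{k-1}-1$ independently of $j$, while the top $p$-degree achievable in $B_j$ is $p^{n-1}-1$; the gap is exactly $p^{k-1}-1$ possible values, hence at most $p^{k-1}$ monomials and a factor of at most $p^{p^{k-1}}$. The final sentence, that the index is bounded by a function of $p$ and $k$ alone, is then immediate once one observes that for a fixed normal subgroup the relevant $n$ is fixed and the bound is explicit; the $\emph{qualitative}$ independence claim is really the statement that the exponent $p^{k-1}$ controlling each factor depends only on $p$ and $k$.
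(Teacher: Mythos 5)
Your proposal is correct and takes essentially the same route as the paper, which offers no written proof beyond calling the corollary a straightforward consequence of the preceding proposition: one picks \(g\in N\) with nontrivial \(B_k\)-component to get \(\gamma_{p^{k-1}+1}(W_n)\le \langle g\rangle^{W_n}\le N\), and then bounds the index by \(|(B_k\cdots B_n):\gamma_{p^{k-1}+1}(W_n)|=(p^{p^{k-1}})^{n-k+1}\), counting via Lemma~\ref{lem:gammasuiB} the monomials of each \(B_j\) surviving modulo \(\gamma_{p^{k-1}+1}(W_n)\). The only blemishes are harmless: the surviving \(p\)-degrees in each \(B_j\) form a block of exactly \(p^{k-1}\) values (your parenthetical ``\(p^{k-1}-1\) possible values'' is an off-by-one slip that you do not actually use), and your cautious reading of the final sentence is warranted, since the displayed bound genuinely depends on \(n\) as well, the extreme case \(N=B_k\cdots B_n\) attaining it.
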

\begin{remark}
	Notice that if \(k=n\), then \(N\) coincides with a term of the lower central series depending only on the maximal monomial term appearing in \(N\).
\end{remark}

\section{The associated Lie algebra of \(W_n\)}\label{sec:lie}
In this section, we introduce the Lie algebra \(\Lie_n\) associated to the group \(W_n\) and we define a map between this structures. We also compute the upper and the lower central series of \(\Lie_n\).

We start noting that each base subgroup \(B_i\) is a uniserial module for \(W_n\) so that by Corollary~\ref{cor:desc_satur} the quotient of two consecutive terms of the lower central series is an elementary abelian group.
This implies that the graded Lie ring  $\Lie_n$ associated to the lower central series of $W_n$ is a Lie algebra over the field \(\F_p\). 
In \cite{netreba} this Lie algebra is characterized as an iterated wreath product $\Lie_n=\wr^n\Lie_1$, where $\Lie_1$ is the one dimensional algebra over $\F_p$. In particular, let $\partial_k$ be the derivation given by the standard partial derivative with respect to the variable $x_k$, with $1\le k\le n$. We identify $\Lie_n$ as the subalgebra of the Witt algebra over \(\F_p\) in \(n\) variables (see \cite[Chapter~2]{strade}) spanned by the basis
\begin{equation*}
	\mathfrak{B}=\bigcup_{k=1}^n \mathfrak{B}_k
	\text{ where }
	\mathfrak{B}_k=\Set{x^\Lambda\partial_k\mid \Lambda\in \mathcal{P}_p(k-1)}.
\end{equation*}
The product of $\Lie_{n}$ is defined on the basis \(\mathfrak{B}\) as follows
\begin{eqnarray*}
	\left[x^\Lambda \partial_k , x^\Theta \partial_j\right] :=&
	\partial_{j}(x^\Lambda)  x^\Theta \partial_k - x^\Lambda \partial_{k}(x^\Theta) \partial_j \nonumber \\
	=&
	\begin{cases}
		\partial_{j}(x^\Lambda)  x^\Theta \partial_k & \text{if \(j<k\)}, \\
		- x^\Lambda \partial_{k}(x^\Theta) \partial_j & \text{if \(j>k\)},\\
		0 & \text{otherwise.}
	\end{cases}
\end{eqnarray*} 
This operation is then extended by bilinearity on the whole \(\Lie_n\).

\medskip
Let $cx^\Lambda\Delta_k\in W_n$, where \(c\in \F_p\), we define 
\begin{align*}
	\phi_i(c x^\Lambda\Delta_k)=
	\begin{cases}
		cx^\Lambda\partial_k&\text{ if }x^\Lambda\Delta_k\in \gamma_i(W_n)\setminus \gamma_{i+1}(W_n)\\
		0&\text{otherwise}
	\end{cases}
\end{align*}
Notice that $\phi_i(x^\Lambda\Delta_k)\neq 0$ if and only if $\pdeg(x^\Lambda\Delta_k)=p^{n-1}-i$. Let $f\Delta_k$ be an homogeneous element of $\gamma_i(W_n)$, we define $\phi_i(f\Delta_k)\defeq \phi_i(\lt(f\Delta_k))$. Let $g=g_1\dots g_n\in W_n$, we set
\begin{equation*}
	\phi_i(g)=\begin{cases}
	\sum_{j=1}^n \phi_i(\lt(g_j)) & \text{if } g\in \gamma_{i}(W_n)\setminus\gamma_{i+1}(W_n),\\
	0 & \text{otherwise.}
	\end{cases}
\end{equation*} 
More generally we define \(\phi \colon W_n\to \Lie_n\) by setting \(\phi(g)=\phi_i(g)\) if \(g\in \gamma_i(W_n)\setminus \gamma_{i+1}(W_n)\).
	\begin{definition}\label{def:homogeneous}
	A Lie subring $\mathfrak{h}$ of $\Lie_n$ is said to be homogeneous if it is the span over $\F_p$ of some subset $\mathfrak{H}$ of $\mathfrak{B}$.
\end{definition}
Let $S$ be a saturated subgroup of $W_n$. We shall denote by $S^\phi$ the homogeneous Lie subring of $\Lie_n$ spanned by the set $\phi(S\cap \mathcal{B})$.


\begin{lemma}\label{lem:phi i+j}
	Let  $x^\Lambda\Delta_k\in \gamma_i(W_n)\setminus \gamma_{i+1}(W_n)$ and $x^\Theta\Delta_h\in \gamma_j(W_n) \setminus \gamma_{j+1}(W_n)$.
	If the commutator \([x^\Lambda\Delta_k,x^\Theta\Delta_h]\) is not trivial, then it lies in \(\gamma_{i+j}(W_n)\setminus \gamma_{i+j+1}(W_n)\) and
	the following equality holds 
	\begin{equation}
	   \phi_{i+j}([x^\Lambda\Delta_k,x^\Theta\Delta_h])=[\phi_i(x^\Lambda\Delta_k),\phi_j(x^\Theta\Delta_h)].
	\end{equation}
\end{lemma}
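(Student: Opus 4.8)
The plan is to read off the leading term of the group commutator from the Taylor expansion~\eqref{taylor} and to match it, in the relevant graded layer, with the bracket of $\Lie_n$. First I would clear away the degenerate configurations. If $k=h$ the two elements sit in the same abelian base subgroup $B_k$, so the commutator is trivial and there is nothing to prove; on the Lie side $[x^\Lambda\partial_k,x^\Theta\partial_k]=0$ as well. Since $[a,b]=[b,a]^{-1}$ in $W_n$ and the Lie bracket is antisymmetric, I may assume $k>h$ throughout the main argument and recover the case $k<h$ at the end by exchanging the two entries; there the inversion in the group will account for the minus sign in $[x^\Lambda\partial_k,x^\Theta\partial_h]=-x^\Lambda\tfrac{\partial x^\Theta}{\partial x_k}\partial_h$.

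Next I would translate the layer hypotheses into $p$-degrees. By Lemma~\ref{lem:gammasuiB}, membership in $\gamma_i(W_n)\setminus\gamma_{i+1}(W_n)$ forces $\pdeg(x^\Lambda\Delta_k)=p^{n-1}-i$, hence $\pdeg(x^\Lambda)=p^{k-1}-i$, and likewise $\pdeg(x^\Theta)=p^{h-1}-j$. For $k>h$, Equation~\eqref{taylor} writes the commutator as $\sum_{s\ge 1}\tfrac1{s!}\tfrac{\partial^s x^\Lambda}{\partial x_h^s}(x^\Theta)^s\Delta_k\in B_k$, and by the commutator $p$-degree formula recalled just before Lemma~\ref{lem:pdeg-1} its $p$-degree is that of the $s=1$ summand $\tfrac{\partial x^\Lambda}{\partial x_h}x^\Theta\Delta_k$, whose nominal value is
\[
\pdeg\Bigl(\tfrac{\partial x^\Lambda}{\partial x_h}x^\Theta\,\Delta_k\Bigr)=\bigl(p^{k-1}-i-p^{h-1}\bigr)+\bigl(p^{h-1}-j\bigr)+\mu_k=p^{n-1}-(i+j).
\]
The inclusion $[\gamma_i,\gamma_j]\subseteq\gamma_{i+j}$ is automatic, so the commutator always lies in $\gamma_{i+j}(W_n)$; what must be shown is that it is not driven into $\gamma_{i+j+1}(W_n)$ and that $\phi_{i+j}$ sends it to the bracket. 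When the monomial $\tfrac{\partial x^\Lambda}{\partial x_h}x^\Theta$ already has every exponent at most $p-1$, no reduction intervenes: its $p$-degree equals $p^{n-1}-(i+j)$, the commutator lies in $\gamma_{i+j}(W_n)\setminus\gamma_{i+j+1}(W_n)$, and by the definitions of $\lt$ and of $\phi$ we get $\phi_{i+j}$ of the commutator equal to $\tfrac{\partial x^\Lambda}{\partial x_h}x^\Theta\partial_k$, which is exactly $[x^\Lambda\partial_k,x^\Theta\partial_h]$ in $\Lie_n$.

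The hard part will be reconciling the two reduction rules behind the two structures: in $B_k$ one has $x_s^p=x_s$, whereas the Witt-algebra model of $\Lie_n$ enforces $x_s^p=0$. These agree on monomials with all exponents below $p$, but they diverge when $\tfrac{\partial x^\Lambda}{\partial x_h}x^\Theta$ overflows in some variable (or when $\lambda_h=0$): there the Lie bracket vanishes, while in the group the relation $x_s^p=x_s$ only lowers the exponent, yielding a possibly nonzero element of strictly smaller $p$-degree. I would prove that these degenerate behaviours are perfectly aligned, namely that $[x^\Lambda\partial_k,x^\Theta\partial_h]=0$ precisely when the reduced leading monomial has $p$-degree strictly below $p^{n-1}-(i+j)$; in that case the cited $p$-degree formula places the commutator in $\gamma_{i+j+1}(W_n)$, so $\phi_{i+j}$ annihilates it and matches the vanishing bracket. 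Consequently $\phi_{i+j}([x^\Lambda\Delta_k,x^\Theta\Delta_h])=[\phi_i(x^\Lambda\Delta_k),\phi_j(x^\Theta\Delta_h)]$ in every case, both sides being simultaneously zero or simultaneously the surviving monomial times $\partial_k$, which completes the proof after transporting the computation to the symmetric case $k<h$.
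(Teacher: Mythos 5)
Your no-overflow case is precisely the paper's proof: after the same normalizations (assume $k>h$, both $\phi$-images nonzero, $\frac{\partial x^\Lambda}{\partial x_h}\neq 0$), the paper asserts $\pdeg([x^\Lambda\Delta_k,x^\Theta\Delta_h])=\pdeg\bigl(\frac{\partial x^\Lambda}{\partial x_h}x^\Theta\Delta_k\bigr)=p^{n-1}-i-j$ and reads off the equality from the definitions of $\lt$ and $\phi$. The paper never mentions reduction modulo $x_s^p=x_s$; it tacitly assumes the monomial $\frac{\partial x^\Lambda}{\partial x_h}x^\Theta$ is already reduced, so up to that point you and the paper agree, and your observation that something remains to be checked is sharper than the paper's own argument.

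The genuine problem lies in your degenerate case, and it is fatal to the proposal as a proof of this lemma: the conclusion you reach there contradicts the statement you claim to have proved. You argue, correctly, that when $\frac{\partial x^\Lambda}{\partial x_h}x^\Theta$ has some exponent $\geq p$, the reduced commutator has $p$-degree strictly below $p^{n-1}-(i+j)$ and hence lies in $\gamma_{i+j+1}(W_n)$, while the Lie bracket vanishes. But the lemma asserts that a nontrivial commutator lies in $\gamma_{i+j}(W_n)\setminus\gamma_{i+j+1}(W_n)$; a case analysis one of whose branches denies this conclusion can only ``complete the proof'' if that branch is empty, and it is not. For $p=3$, $n=3$, take $x^\Lambda\Delta_k=x_1x_2\Delta_3\in\gamma_5(W_3)\setminus\gamma_6(W_3)$ (so $i=5$, by Lemma~\ref{lem:gammasuiB}) and $x^\Theta\Delta_h=x_1^2\Delta_2\in\gamma_1(W_3)\setminus\gamma_2(W_3)$ (so $j=1$). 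Then $[x_1x_2\Delta_3,x_1^2\Delta_2]=x_1^3\Delta_3=x_1\Delta_3\neq 1$ is a nontrivial element of $p$-degree $1$, so it lies in $\gamma_8(W_3)\subseteq\gamma_7(W_3)=\gamma_{i+j+1}(W_3)$, whereas $[x_1x_2\partial_3,x_1^2\partial_2]=x_1^3\partial_3=0$ in $\Lie_3$. The displayed equality survives (both sides are $0$, since $\phi_6$ vanishes on $\gamma_7(W_3)$ by definition), but the membership claim is simply false. In other words, your overflow analysis does not complete the paper's argument; it uncovers a counterexample to the lemma as stated, and with it a gap in the paper's own proof. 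The honest outcome of your approach is a corrected statement: the commutator always lies in $\gamma_{i+j}(W_n)$ and the equality always holds, with membership in $\gamma_{i+j}(W_n)\setminus\gamma_{i+j+1}(W_n)$ occurring exactly when $[\phi_i(x^\Lambda\Delta_k),\phi_j(x^\Theta\Delta_h)]\neq 0$. You should state and prove that version explicitly, rather than declare the original lemma proved by an argument that refutes part of it.
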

\begin{proof}
	Without lost of generality we can assume $k>h$, that \(\phi_i(x^\Lambda\Delta_k)\ne 0\ne \phi_j(x^\Theta\Delta_h)\) and that \(\dfrac{\partial x^\Lambda}{\partial x_h}\ne 0\). Notice that  $\pdeg([x^\Lambda\Delta_k,x^\Theta\Delta_h])=\pdeg\bigl(\dfrac{\partial x^\Lambda}{\partial x_h}x^\Theta\Delta_k\bigr)=p^{n-1}-i-j$, thus
	\begin{align*}
		\phi_{i+j}([x^\Lambda\Delta_k,x^\Theta\Delta_h])&=\phi_{i+j}\left(\dfrac{\partial x^\Lambda}{\partial x_h}x^\Theta\Delta_k\right)\\
		&=\dfrac{\partial x^\Lambda}{\partial x_h}x^\Theta\partial_k\\
		&=[x^\Lambda\partial_k,x^\Theta\partial_h]\\
       &=[\phi_i(x^\Lambda\Delta_k),\phi_j(x^\Theta\Delta_h)]. \qedhere
	\end{align*}
\end{proof}

\begin{remark}\label{rmk: |S|}
	A straightforward consequence of the previous lemma is that if $S$ is a saturated subgroup of $W_n$, then 
	\(|S|=|S^\phi|\).
\end{remark}

\begin{corollary}
	For each $i\ge 1$ we have $\gamma_i(W_n)^\phi=\Lie_n^i$ is the \(i\)-th Lie power of \(\Lie_n\).
\end{corollary}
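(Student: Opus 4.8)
The plan is to prove the two claimed facts separately: first the identification $\gamma_i(W_n)^\phi = \Lie_n^i$ as a homogeneous ideal, and then that $\Lie_n^i$ coincides with the $i$-th Lie power. Recall that $\gamma_i(W_n)$ is a saturated subgroup by Corollary~\ref{cor:desc_satur}, so $\gamma_i(W_n)^\phi$ is well-defined as the homogeneous subring spanned by $\phi(\gamma_i(W_n)\cap \mathcal{B})$. The first observation I would record is that, by the definition of $\phi_i$ together with the remark that $\phi_i(x^\Lambda\Delta_k)\neq 0$ iff $\pdeg(x^\Lambda\Delta_k)=p^{n-1}-i$, the set $\phi(\gamma_i(W_n)\cap\mathcal{B})$ is exactly
\[
\Set{x^\Lambda\partial_k\mid \pdeg(x^\Lambda\Delta_k)\le p^{n-1}-i},
\]
using Lemma~\ref{lem:gammasuiB} to characterize which monomials lie in $\gamma_i(W_n)\cap B_k$. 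Thus $\gamma_i(W_n)^\phi$ is the span of all basis monomials $x^\Lambda\partial_k$ of $p$-degree at most $p^{n-1}-i$, and in particular $\gamma_i(W_n)^\phi$ is precisely the homogeneous ideal cut out by this degree bound.

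Next I would establish $\Lie_n^i = \gamma_i(W_n)^\phi$ by induction on $i$. For $i=1$ both sides equal all of $\Lie_n$ (every basis element has $p$-degree at most $p^{n-1}-1 = \mu_1 + (p^{n-1}-1-\mu_1)$, which is the maximal possible value, matching $\gamma_1(W_n)=W_n$). For the inductive step, $\Lie_n^{i+1}=[\Lie_n^i,\Lie_n]=[\gamma_i(W_n)^\phi,\Lie_n]$, and here Lemma~\ref{lem:phi i+j} is the engine: it guarantees $\phi_{i+j}([u,v]) = [\phi_i(u),\phi_j(v)]$ whenever the group commutator is nontrivial, so that the bracket of a degree-$i$ homogeneous element with a degree-$1$ homogeneous element produces exactly $\phi_{i+1}$ of the corresponding group commutator. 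Combined with the formula after Corollary~\ref{cor:upperlower} describing $\gamma_i(W_n)/\gamma_{i+1}(W_n)$, and with Lemma~\ref{lem:gammasuiB}, this shows the image under $\phi$ of $[\gamma_i(W_n),W_n]=\gamma_{i+1}(W_n)$ spans $\Lie_n^{i+1}$, and conversely that every bracket stays within the degree bound for $\gamma_{i+1}(W_n)^\phi$.

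The main obstacle I anticipate is the containment $\Lie_n^{i+1}\supseteq \gamma_{i+1}(W_n)^\phi$, i.e., showing that \emph{every} basis monomial of $p$-degree at most $p^{n-1}-i-1$ is actually realized as a Lie bracket of a degree-$i$ element with $\Lie_n$, rather than merely being contained in the formal span. The clean way around this is to transport Corollary~\ref{cor:commutators} (or Lemma~\ref{lem:pdeg-1}) through $\phi$: that corollary produces, for any target monomial, an explicit iterated group commutator lowering the $p$-degree by one at each step while staying inside a single base subgroup $B_k$, and Lemma~\ref{lem:phi i+j} converts each such nontrivial commutator into the corresponding Lie bracket. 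Hence the surjectivity onto the full degree-bounded span follows from the group-side surjectivity already established in Lemma~\ref{lem:gammasuiB}. Once both inclusions are in place, $\gamma_i(W_n)^\phi=\Lie_n^i$ follows, and since $\Lie_n^i$ is by definition the $i$-th term of the lower central series (the $i$-th Lie power) of $\Lie_n$, the statement is proved.
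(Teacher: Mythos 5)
Your proposal is correct and takes essentially the same route as the paper: there the corollary is stated with no explicit proof, being intended as an immediate consequence of Lemma~\ref{lem:phi i+j} combined with the monomial description of $\gamma_i(W_n)$ from Lemma~\ref{lem:gammasuiB} and Corollary~\ref{cor:desc_satur}, which is exactly the machinery you invoke. Your extra care on the inclusion $\Lie_n^{i+1}\supseteq \gamma_{i+1}(W_n)^\phi$ (transporting Lemma~\ref{lem:pdeg-1} through $\phi$, using that a monomial of $B_k$ is determined by its $p$-degree) correctly fills in the one step the paper leaves implicit.
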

We now compute the upper central series of \(\Lie_n\). 
\begin{definition}
Let  $1\le j\le n$. We define  $\xi_m$ as the $\F_p$-span of the set $$\Set{x^{\Lambda_n}\partial_n,\dots,x^{\Lambda_1}\partial_1 \mid \pdeg(x^{\Lambda_i}\partial_i)< m\text{ for all }i=1,\dots,n}.$$
	\end{definition}
 Notice that, by Lemma~\ref{lem:gammasuiB}, \(\xi_m=\gamma_{p^{n-1}-m}(W_n)^\phi\),
and so
		\(\xi_m\subseteq \xi_{m+1}\).
	\begin{lemma}
		For each $x^\Theta\partial_\ell\in \Lie_n$ and $x^\Lambda\partial_i\in \xi_m$ the commutator $[x^\Lambda\partial_i, x^\Theta\partial_\ell]$ belongs to $\xi_{m-1}$.
	\end{lemma}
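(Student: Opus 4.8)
The plan is to reduce to the case of basis monomials and then to verify a single degree estimate: that bracketing an element $x^\Lambda\partial_i\in\xi_m$ by any basis element of $\Lie_n$ strictly lowers its $\pdeg$. Since $\xi_m$ is by definition the $\F_p$-span of the basis monomials $x^\Lambda\partial_i\in\mathfrak{B}$ with $\pdeg(x^\Lambda\partial_i)<m$, and the bracket is bilinear, it suffices to prove the claim when both $x^\Lambda\partial_i$ and $x^\Theta\partial_\ell$ lie in $\mathfrak{B}$. In that situation the structure constants recorded in the excerpt show that $[x^\Lambda\partial_i,x^\Theta\partial_\ell]$ is again a scalar multiple of a single basis monomial, or is zero (in which case it lies in $\xi_{m-1}$ trivially), so the whole argument comes down to computing one $\pdeg$.

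The key elementary inequality is that every basis monomial $x^\Theta\partial_\ell\in\mathfrak{B}_\ell$ satisfies $\pdeg(x^\Theta)\le p^{\ell-1}-1$: indeed $\Theta\in\mathcal{P}_p(\ell-1)$ forces $x^\Theta$ to involve only $x_1,\dots,x_{\ell-1}$ with each exponent at most $p-1$, whence $\pdeg(x^\Theta)=\sum_{s=1}^{\ell-1}\theta_s p^{s-1}\le\sum_{s=1}^{\ell-1}(p-1)p^{s-1}=p^{\ell-1}-1$. I would combine this with the formula $\pdeg(x^\Lambda\partial_k)=\pdeg(x^\Lambda)+\mu_k$, where $\mu_k=p^{n-1}-p^{k-1}$, together with the additivity of $\pdeg$ under multiplication of monomials (valid on the nonzero part of a product).

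I then split into the two nontrivial cases dictated by the product rule. If $\ell<i$, the bracket equals $\partial_\ell(x^\Lambda)x^\Theta\partial_i$, a multiple of $x^{\Lambda-e_\ell+\Theta}\partial_i$, so that $\pdeg=\pdeg(x^\Lambda\partial_i)-p^{\ell-1}+\pdeg(x^\Theta)$, and the inequality above gives $\pdeg\le\pdeg(x^\Lambda\partial_i)-1$. If $\ell>i$, the bracket equals $-x^\Lambda\partial_i(x^\Theta)\partial_\ell$, a multiple of $x^{\Lambda+\Theta-e_i}\partial_\ell$, and using $\mu_\ell-\mu_i=p^{i-1}-p^{\ell-1}$ one finds $\pdeg=\pdeg(x^\Lambda\partial_i)+\pdeg(x^\Theta)-p^{\ell-1}\le\pdeg(x^\Lambda\partial_i)-1$. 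The case $\ell=i$ gives a zero bracket. In every case $\pdeg([x^\Lambda\partial_i,x^\Theta\partial_\ell])\le\pdeg(x^\Lambda\partial_i)-1<m-1$, so the bracket lies in $\xi_{m-1}$.

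The step I expect to be the main obstacle is the case $\ell>i$: here the surviving derivation is $\partial_\ell$ with index larger than $i$, so one lands in a \emph{higher} part of the algebra, and it is not immediately clear that the $\pdeg$ drops below that of $x^\Lambda\partial_i$ rather than below that of $x^\Theta\partial_\ell$. The point is precisely that the extra weight $\mu_\ell-\mu_i=p^{i-1}-p^{\ell-1}$ acquired by passing from $\partial_i$ to $\partial_\ell$ is exactly cancelled by the derivative $\partial_i$ (which removes $p^{i-1}$) together with the bound $\pdeg(x^\Theta)\le p^{\ell-1}-1$. This is the Lie-algebra analogue of the group estimate~\eqref{rmk:commutators}, but stated relative to the factor lying in $\xi_m$ — which is the \emph{smaller}-index factor in this case — and it is this reformulation that makes the degree drop uniform across both cases and yields the lemma.
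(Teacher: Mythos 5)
Your proof is correct, and its skeleton --- reduce to basis monomials by bilinearity (with vanishing brackets handled trivially), split on the relative position of $\ell$ and $i$, and verify a $\pdeg$ estimate in each case --- is the same as the paper's; in the case $\ell<i$ your inequality is exactly the one the paper writes down. The one genuine difference is the case $\ell>i$. There the paper bounds $\pdeg\bigl(\frac{\partial x^\Theta}{\partial x_i}x^\Lambda\partial_\ell\bigr)$ by the absolute ceiling $p^{n-1}-1-p^{i-1}=\mu_i-1$ (the $x_i$-exponent of the product is at most $p-2$), and then invokes the auxiliary observation that $x^\Lambda\partial_i\neq 0$ forces $m>\mu_i$, whence $\mu_i-1<m-1$. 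You instead compute the degree exactly, $\pdeg=\pdeg(x^\Lambda\partial_i)+\pdeg(x^\Theta)-p^{\ell-1}$, and use $\pdeg(x^\Theta)\le p^{\ell-1}-1$ to obtain the relative drop $\pdeg\le\pdeg(x^\Lambda\partial_i)-1<m-1$. Both estimates are valid: yours has the advantage of being uniform across the two cases (the bracket always loses at least one unit of $\pdeg$ relative to the factor lying in $\xi_m$, which is indeed the Lie analogue of the group inequality~\eqref{rmk:commutators} stated relative to the smaller-index factor) and of dispensing with the observation $m>\mu_i$; the paper's bound $\mu_i-1\le\pdeg(x^\Lambda\partial_i)-1$ is the sharper absolute one in that case, showing directly that the bracket falls below the minimal degree of the $i$-th block. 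Either estimate closes the proof.
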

	\begin{proof}
		Without loss of generalities, we may assume $x^\Lambda\partial_i\neq 0 \neq x^\Theta\partial_\ell$.
		For $\ell<i$, the assertion is easily verified noting that 
		$$\pdeg\bigl(\dfrac{\partial x^\Lambda}{\partial x_\ell}x^\Theta\partial_i\bigr)\le\pdeg(x^\Lambda\partial_i)-1<m-1.$$ 
		If $\ell>i$, we have that $[x^\Theta\partial_\ell,x^\Lambda\partial_i]=\dfrac{\partial x^\Theta}{\partial x_i}x^\Lambda\partial_\ell$. We observe that, since $x^\Lambda\partial_i\neq 0$, we must have $m> \mu_i$. Hence 
		$$\pdeg\bigl(\dfrac{\partial x^\Theta}{\partial x_i}x^\Lambda\partial_\ell\bigr)\le p^{n-1}-1-p^{i-1}=\mu_i-1< m-1. $$
		Thus, \(x^\Lambda\partial_i\in \xi_{m-1} \).
	\end{proof}
	\begin{corollary}
		For all $i=1,\dots,n$ we have $\xi_m\cap \mathfrak{B}_i\subseteq Z_m(\Lie_n)\cap \mathfrak{B}_i$.
	\end{corollary}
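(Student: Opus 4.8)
The plan is to prove the stronger statement $\xi_m\subseteq Z_m(\Lie_n)$ for every $m\ge 0$ by induction on $m$, from which the claimed inclusion $\xi_m\cap\mathfrak{B}_i\subseteq Z_m(\Lie_n)\cap\mathfrak{B}_i$ follows at once: any basis element lying in $\xi_m\cap\mathfrak{B}_i$ lies in $\xi_m\subseteq Z_m(\Lie_n)$ and in $\mathfrak{B}_i$, hence in the intersection on the right. Here I use the standard recursive description of the upper central series of a Lie algebra, namely $Z_0(\Lie_n)=0$ and, for $m\ge 1$,
\[
Z_m(\Lie_n)=\Set{u\in \Lie_n\mid [u,\Lie_n]\subseteq Z_{m-1}(\Lie_n)}.
\]
I also adopt the convention $\xi_0=0$, which is consistent with the definition of $\xi_m$ since no basis element $x^\Lambda\partial_k$ has negative $\pdeg$.

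First I would dispose of the base case $m=0$, where $\xi_0=0=Z_0(\Lie_n)$ trivially. For the inductive step I assume $\xi_{m-1}\subseteq Z_{m-1}(\Lie_n)$. Because $\xi_m$ is the $\F_p$-span of the homogeneous elements it contains and the bracket is bilinear, it suffices to verify $u\in Z_m(\Lie_n)$ for a single basis element $u=x^\Lambda\partial_i\in\xi_m$. Applying the preceding lemma to this $u\in\xi_m$ and to an arbitrary $x^\Theta\partial_\ell\in\Lie_n$, the commutator $[x^\Lambda\partial_i,x^\Theta\partial_\ell]$ lies in $\xi_{m-1}$; hence $[u,\Lie_n]\subseteq\xi_{m-1}\subseteq Z_{m-1}(\Lie_n)$ by the inductive hypothesis. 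By the recursive characterisation of $Z_m(\Lie_n)$ this gives $u\in Z_m(\Lie_n)$, and therefore $\xi_m\subseteq Z_m(\Lie_n)$.

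The only point needing a little care is the bookkeeping around the indexing convention for the upper central series and the initialisation $\xi_0=0$; once the recursive form of $Z_m(\Lie_n)$ is fixed, the argument is a one-line induction entirely driven by the previous lemma, so I do not expect a genuine obstacle here. It is worth stressing that all the real work sits in that lemma, i.e.\ in the containment $[\xi_m,\Lie_n]\subseteq\xi_{m-1}$, whose two cases $\ell<i$ and $\ell>i$ encode the drop of one unit in $\pdeg$ under bracketing; the present corollary merely repackages that degree estimate in terms of the upper central series, and (together with the identification $\xi_m=\gamma_{p^{n-1}-m}(W_n)^\phi$) prepares the reverse inclusion needed to conclude that the two central series of $\Lie_n$ coincide.
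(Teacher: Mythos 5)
Your proof is correct and is essentially the argument the paper intends: the corollary is stated there without proof precisely because it follows from the preceding lemma by the standard induction on \(m\) that you spell out (bracketing drops \(\xi_m\) into \(\xi_{m-1}\subseteq Z_{m-1}(\Lie_n)\), hence \(\xi_m\subseteq Z_m(\Lie_n)\), and intersecting with \(\mathfrak{B}_i\) gives the claim). Your only additions are making explicit the convention \(\xi_0=0\) and the recursive characterisation of the upper central series, both of which are consistent with the paper's definitions.
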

		
		\begin{lemma}
		Let $x^\Lambda\partial_i$ be such that $\pdeg(x^\Lambda\partial_i)=r+\mu_i$. Then for each  $k+1<r$ there exists $x^\Theta\partial_\ell\in \Lie_n$ such that $\pdeg([x^\Theta\partial_\ell,x^\Lambda\partial_i])>k+\mu_i$.
	\end{lemma}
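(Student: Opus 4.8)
The plan is to produce the required bracket by differentiating $x^\Lambda$ in its lowest occurring variable and then restoring the lost degree with a monomial supported on the strictly lower variables. First I record that $\pdeg(x^\Lambda\partial_i)=r+\mu_i$ forces $\pdeg(x^\Lambda)=r$; the set of admissible $k$ is nonempty only when $r\ge 1$, so the index $\ell\defeq\min\Set{j\mid\lambda_j\neq 0}$ exists and satisfies $\ell\le i-1<i$. This $\ell$ is the variable I will differentiate against, and its minimality is what will keep the resulting product inside the truncated basis.

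Next I would take $x^\Theta=x_1^{p-1}\cdots x_{\ell-1}^{p-1}$ (the empty monomial when $\ell=1$), so that $x^\Theta\partial_\ell\in\mathfrak{B}_\ell\subseteq\Lie_n$. Since $x^\Theta$ involves only the variables $x_1,\dots,x_{\ell-1}$ we have $\partial_i(x^\Theta)=0$, and because $\ell<i$ the bracket formula collapses to $[x^\Theta\partial_\ell,x^\Lambda\partial_i]=-\,x^\Theta\,\partial_\ell(x^\Lambda)\,\partial_i$. I then check that this element is nonzero: the coefficient $\lambda_\ell$ survives in $\F_p$ because $1\le\lambda_\ell\le p-1$, so $\partial_\ell(x^\Lambda)\neq 0$; moreover, by minimality of $\ell$ the variables $x_1,\dots,x_{\ell-1}$ do not occur in $x^\Lambda$, so in the product $x^\Theta\,\partial_\ell(x^\Lambda)$ these variables acquire exponent exactly $p-1$ while all remaining exponents are those of $x^\Lambda$ (with $x_\ell$ lowered by one), each at most $p-1$. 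Hence the product is a genuine basis monomial of $\mathfrak{B}_i$.

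Finally I count degrees. One has $\pdeg(\partial_\ell(x^\Lambda))=r-p^{\ell-1}$ and $\pdeg(x^\Theta)=\sum_{j=1}^{\ell-1}(p-1)p^{j-1}=p^{\ell-1}-1$, so the monomial part $x^\Theta\,\partial_\ell(x^\Lambda)$ has $p$-degree $r-1$ and therefore $\pdeg([x^\Theta\partial_\ell,x^\Lambda\partial_i])=r-1+\mu_i$. The hypothesis $k+1<r$ gives $k<r-1$, whence $r-1+\mu_i>k+\mu_i$, which is the assertion. I note in passing that by prescribing the exponents of $x^\Theta$ in the variables below $\ell$ one can realize every $p$-degree in $\Set{r-p^{\ell-1},\dots,r-1}$ for the monomial part, but the maximal choice above already works uniformly for all admissible $k$.

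The computation is essentially routine; the single point that needs care is ensuring that the product $x^\Theta\,\partial_\ell(x^\Lambda)$ stays within the truncated monomials (every exponent at most $p-1$) so that the bracket does not vanish. It is precisely the minimality of $\ell$ — which guarantees $\lambda_1=\dots=\lambda_{\ell-1}=0$ — that secures this, and the rest is bookkeeping with the base-$p$ expansion defining $\pdeg$.
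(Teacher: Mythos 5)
Your proof is correct, and it takes a genuinely different route from the paper's. The paper compares the base-\(p\) expansions \(r=\lambda_1+\lambda_2p+\dots+\lambda_{i-1}p^{i-2}\) and \(k+1=\gamma_1+\gamma_2p+\dots+\gamma_{i-1}p^{i-2}\), takes the maximal index \(j\) with \(\lambda_j>\gamma_j\), and chooses the bracketing element through a case analysis (bracketing with \(\partial_j\), with \(\partial_s\) for a suitable \(s<j\), or with \(x_1^{p-1}\cdots x_{j-1}^{p-1}\partial_j\)); there the witness depends on \(k\), the degree of the resulting bracket may drop by more than one, and it is the digit comparison that keeps it above \(k+\mu_i\). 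You instead transplant the paper's own group-theoretic Lemma~\ref{lem:pdeg-1} into the Lie algebra: differentiating in the minimal occurring variable \(x_\ell\) and compensating with \(x_1^{p-1}\cdots x_{\ell-1}^{p-1}\) gives a bracket of \(p\)-degree exactly \(r-1+\mu_i\), which exceeds \(k+\mu_i\) for every admissible \(k\) simultaneously. This is a slightly stronger, uniform statement (a single witness chosen independently of \(k\)) and it eliminates the case analysis entirely; the crucial point, which you identify correctly, is that minimality of \(\ell\) forces \(\lambda_1=\dots=\lambda_{\ell-1}=0\), so \(x^\Theta\,\partial_\ell(x^\Lambda)\) has every exponent at most \(p-1\) and the bracket is a nonzero scalar multiple of a basis monomial of \(\mathfrak{B}_i\). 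The paper's digit-based argument buys nothing essential here, so your degree-drop-by-one construction is arguably the cleaner proof, and it has the added merit of making the parallel with the group case (Lemma~\ref{lem:pdeg-1} and Corollary~\ref{cor:commutators}) explicit.
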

	\begin{proof}
		Let $r=\lambda_1+\lambda_2p+\dots +\lambda_{i-1}p^{i-2}$ and $k+1=\gamma_1+\gamma_2p+\dots+\gamma_{i-1}p^{i-2}$. Let $j$ be the maximum index such that $\lambda_j>\gamma_j$. If $\lambda_j-\gamma_j>1$, then
		\begin{equation*}
			\pdeg([x^\Lambda\partial_i,\partial_j])>k+\mu_i.
		\end{equation*}
		If $\lambda_j-\gamma_j=1$ and there exists $s<j$ such that $\lambda_s\neq0$, then $\pdeg([x^\Lambda\partial_i,\partial_s])>k+\mu_i$. If such $s$ does not exist and $j\neq 1$, then $\pdeg([x^\Lambda\partial_i,x_1^{p-1}\dots x_{j-1}^{p-1}\partial_j])=r-1+\mu_i>k+\mu_i$. If $j=1$, then $\pdeg([x^\Lambda\partial_i,\partial_1])=r-1+\mu_i>k+\mu_i$.
	\end{proof}
	\begin{corollary}
		For all $i=1,\dots,n$ the following equalities hold $$\xi_m\cap \mathfrak{B}_i=Z_m(\Lie_n)\cap \mathfrak{B}_i=\Lie_n^{p^{n-1}-m}\cap \mathfrak{B}_i.$$ In particular, 
		\(\xi_m=Z_m(\Lie_n)=\Lie_n^{p^{n-1}-m}\).
	\end{corollary}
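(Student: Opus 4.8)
The plan is to prove that the three subspaces agree column by column, i.e. after intersecting with each $\mathfrak{B}_i$, and then to upgrade the column-wise equalities to the global statement by a homogeneity argument. The identity $\xi_m=\Lie_n^{p^{n-1}-m}$ costs nothing new: we have already observed $\xi_m=\gamma_{p^{n-1}-m}(W_n)^\phi$, and the corollary that $\gamma_j(W_n)^\phi=\Lie_n^j$ then gives $\gamma_{p^{n-1}-m}(W_n)^\phi=\Lie_n^{p^{n-1}-m}$; intersecting with $\mathfrak{B}_i$ yields the third equality in each column. The inclusion $\xi_m\cap\mathfrak{B}_i\subseteq Z_m(\Lie_n)\cap\mathfrak{B}_i$ is precisely the preceding corollary. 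Everything therefore reduces to the reverse inclusion $Z_m(\Lie_n)\cap\mathfrak{B}_i\subseteq \xi_m$.

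For this I would argue contrapositively on basis elements: if $x^\Lambda\partial_i\in\mathfrak{B}_i$ lies outside $\xi_m$, i.e.\ $\pdeg(x^\Lambda\partial_i)\ge m$, I want to produce $y_1,\dots,y_m\in\Lie_n$ with $[x^\Lambda\partial_i,y_1,\dots,y_m]\ne 0$, which forces $x^\Lambda\partial_i\notin Z_m(\Lie_n)$. The engine is a \emph{reduce-the-$p$-degree-by-exactly-one} step: starting from any nonzero basis element $z$ with $\pdeg(z)\ge 1$, produce a single $y\in\Lie_n$ such that $[z,y]$ is again a nonzero basis element with $\pdeg([z,y])=\pdeg(z)-1$. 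Iterating this $m$ times (legitimate since $\pdeg(x^\Lambda\partial_i)\ge m$) yields a nonzero iterated commutator of $p$-degree $\pdeg(x^\Lambda\partial_i)-m\ge 0$, which settles the reverse inclusion.

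The reduction step within a fixed column is supplied by the previous lemma: writing $\pdeg(z)=r+\mu_k$ and choosing its parameter maximally (which is admissible once $r\ge 2$) gives a commutator of $p$-degree strictly greater than $r-2+\mu_k$; combined with the fact, analogous to \eqref{rmk:commutators}, that a nonzero commutator strictly lowers the $p$-degree, this forces the value $r-1+\mu_k$, so the drop is exactly one. The case $r=1$ is the $\Lie_n$-analogue of Lemma~\ref{lem:pdeg-1}. These steps keep us inside $\mathfrak{B}_k$ and, after $r$ of them, reach the generator $c\,\partial_k$ of $p$-degree $\mu_k$. The only point needing separate care is the passage from one base subgroup to the next, i.e.\ lowering the $p$-degree by one starting at $\partial_k$ itself; here one checks by hand that $[\,x_1^{p-1}\cdots x_k^{p-1}\partial_{k+1},\,\partial_k\,]$ is a nonzero multiple of $x_1^{p-1}\cdots x_{k-1}^{p-1}x_k^{p-2}\partial_{k+1}$, whose $p$-degree is $\mu_k-1$. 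With these transitions the one-step reduction is available for every $\pdeg\ge 1$, so the iteration indeed runs for all $m\le\pdeg(x^\Lambda\partial_i)$.

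Finally I would glue the column-wise equalities. Both $\xi_m$ and $\Lie_n^{p^{n-1}-m}$ are homogeneous by construction, and $Z_m(\Lie_n)$ is homogeneous as well, since the upper central series of a graded Lie algebra consists of graded ideals (the center of a graded algebra is graded, and this propagates through the quotients $\Lie_n/Z_{j}$). Hence each of the three subspaces is the span of the basis elements it contains, and the per-column identities $\xi_m\cap\mathfrak{B}_i=Z_m(\Lie_n)\cap\mathfrak{B}_i=\Lie_n^{p^{n-1}-m}\cap\mathfrak{B}_i$ assemble into $\xi_m=Z_m(\Lie_n)=\Lie_n^{p^{n-1}-m}$. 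The main obstacle is the reverse inclusion in the middle equality: ensuring that the iterated commutator never collapses to zero, which is exactly why the reduction must be engineered to lower the $p$-degree by precisely one at each step, and in particular why the transitions between consecutive base subgroups must be handled directly rather than through the previous lemma alone.
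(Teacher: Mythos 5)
Your column-by-column engine is sound, and it is essentially the route the paper sets up: the inclusion \(\xi_m\cap\mathfrak{B}_i\subseteq Z_m(\Lie_n)\cap\mathfrak{B}_i\) is the corollary preceding the statement, the identification \(\xi_m=\gamma_{p^{n-1}-m}(W_n)^\phi=\Lie_n^{p^{n-1}-m}\) is exactly how the paper obtains the third member, and your iterated one-step reduction (the preceding lemma with its parameter chosen maximally when \(r\ge 2\), the analogue of Lemma~\ref{lem:pdeg-1} when \(r=1\), and the hand-checked bracket \([x_1^{p-1}\cdots x_k^{p-1}\partial_{k+1},\partial_k]\), of \(p\)-degree \(\mu_k-1\), for the passage between columns) correctly shows that no \emph{basis} element of \(p\)-degree at least \(m\) can lie in \(Z_m(\Lie_n)\). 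In particular the three equalities of subsets of \(\mathfrak{B}_i\) are fully proved.

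The gap is in your gluing step. You claim that \(Z_m(\Lie_n)\) is the span of the basis elements it contains because the upper central series of a graded Lie algebra is graded. That argument only yields gradedness with respect to the \(\mathbb{Z}\)-grading of \(\Lie_n\), and the homogeneous components of that grading are \emph{not} one-dimensional: distinct basis elements in different columns can share the same \(p\)-degree; for instance \(\partial_1,\ x_1^{p-1}\partial_2,\ \dots,\ x_1^{p-1}\cdots x_{n-1}^{p-1}\partial_n\) all have \(p\)-degree \(p^{n-1}-1\), so the top component has dimension \(n\). Hence a homogeneous element \(z=\sum_j c_j b_j\in Z_m(\Lie_n)\) could a priori involve basis elements \(b_j\) none of which lies in \(Z_m(\Lie_n)\): your contrapositive produces \(y_1,\dots,y_m\) with \([b_1,y_1,\dots,y_m]\neq 0\), but \([z,y_1,\dots,y_m]=\sum_j c_j[b_j,y_1,\dots,y_m]\) could still vanish by cancellation among the summands, which your argument never excludes. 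So what you have actually proved is \(Z_m(\Lie_n)\cap\mathfrak{B}\subseteq \xi_m\), not \(Z_m(\Lie_n)\subseteq\xi_m\). (The paper leaves this globalization implicit as well, but the justification you supply is incorrect as stated.) The clean fix is to use the finer \(\mathbb{Z}^n\)-grading that \(\Lie_n\) inherits from the Witt algebra, in which \(x^\Lambda\partial_k\) has multidegree \(\Lambda-e_k\) and the bracket is additive on multidegrees: distinct elements of \(\mathfrak{B}\) have distinct multidegrees, so every \(\mathbb{Z}^n\)-graded subspace is spanned by the basis elements it contains, and running your ``center is graded, propagate up the series'' argument with this grading closes the gap. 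Equivalently, one can prove a no-cancellation lemma: for a fixed basis element \(y\), distinct basis elements have nonzero brackets with \(y\) that are pairwise non-proportional, so the iterated commutator of \(z\) is nonzero as soon as one of the \([b_j,y_1,\dots,y_m]\) is.
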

	\medskip

%
%
%
%
%

%
%
%

	\section{A chain of normalizers}\label{sec:chain}
	In this section, we compute the growth of the normalizer chain $\lbrace N_i\rbrace_{i\geq -1}$ starting from the canonical elementary abelian regular subgroup \(T=\langle \Delta_1,\dots,\Delta_n\rangle\le W_n\),  and defined as follows
	\begin{align}\label{eq:normalizerchain}
		N_i=\begin{cases}
			T&\text{ if }i=-1\\
			N_{W_n}(N_{i-1})&\text{ if }i\ge 0.
		\end{cases}
	\end{align}  
	\begin{proposition}\label{prop:normsaturo}
		Let $S\le W_n$ be a saturated subgroup. The normalizer $N_{W_n}(S)$ of $S$ in $W_n$ is a saturated subgroup.
	\end{proposition}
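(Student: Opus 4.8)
The plan is to verify directly the two defining properties of a saturated subgroup (Definition~\ref{def:saturated}) for $N\defeq N_{W_n}(S)$: that $N$ splits as $(N\cap B_1)\cdots(N\cap B_n)$, and that each $N\cap B_k$ is spanned by the power monomials it contains. Since a saturated subgroup is precisely one spanned by its intersection with $\mathcal{B}$, it suffices to show that the $p$-degree-maximal part of every element of $N$ again lies in $N$, and then to peel off leading terms by a downward induction on the $p$-degree. Because each base subgroup $B_k$ is abelian and, by~\eqref{taylor}, the commutator $[\,\cdot\,,s]$ acts additively on each $B_k$, this peeling is legitimate: an element of $N\cap B_k$ is an honest $\F_p$-linear combination of monomials, and removing its leading monomial (once that monomial is known to lie in $N$) produces another element of $N\cap B_k$ of strictly smaller $p$-degree.

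To locate the leading monomials I would introduce the diagonal torus $D\cong(\F_p^\ast)^n$ acting on $\F_p^n$ by $x_i\mapsto\alpha_i x_i$. A direct computation shows that conjugation by $D$ normalizes $W_n$, sending each power monomial $x^\Lambda\Delta_k$ to a scalar multiple of itself; hence $D$ fixes every saturated subgroup, and in particular $D$ normalizes $S$. Consequently $D$ normalizes $N$ as well, so $N$ is $D$-invariant. The key separation observation is that the $p$-degree together with the $D$-character distinguishes the elements of $\mathfrak{B}$: inside a fixed base subgroup the $p$-degree is already injective on monomials, while for $k\neq j$ the monomials $x^\Lambda\Delta_k$ and $x^\Theta\Delta_j$ have different $D$-characters, because the coordinate indexed by $\max(k,j)$ occurs with exponent $1$ in the character of the monomial carrying that $\Delta$-index and with exponent $0$ in the other. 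Thus any $D$-invariant subspace that is homogeneous for the $p$-degree grading is automatically spanned by power monomials.

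The heart of the argument is therefore to show that $N$ is homogeneous for the lower central (equivalently, $p$-degree) filtration, i.e. that the $p$-degree-maximal part $L$ of any $w\in N$ lies in $N$. Using~\eqref{rmk:commutators} and Corollary~\ref{cor:commutators}, commutators strictly and controllably lower the $p$-degree, and by Lemma~\ref{lem:phi i+j} the top part of $[w,s]$ equals the corresponding bracket of $L$ with $s$; since $[w,s]\in S$ and $S$ is saturated, this top part lies in $S$. Combined with $D$-invariance and the separation observation, this pins $L$ down as a monomial combination whose pieces are the candidates to lie in $N$. The main obstacle is exactly that the group commutator $[x^\Lambda\Delta_k,s]$ is not a single monomial but, by~\eqref{taylor}, carries the lower-order Taylor terms with $j\ge 2$; knowing only that its leading monomial lies in $S$ does not at once give the whole commutator in $S$. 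I would resolve this by running the downward induction on $p$-degree simultaneously for $S$ and for the candidate monomials of $N$: the additivity of $[\,\cdot\,,s]$ on $B_k$ separates the contributions of the distinct monomials of $w$, and the lower Taylor terms, having strictly smaller $p$-degree, are controlled by the inductive hypothesis together with the saturation of $S$. Once each leading monomial of $w$ is shown to normalize $S$, peeling completes the proof and, by sorting the resulting monomials into the base subgroups, simultaneously yields the splitting $N=(N\cap B_1)\cdots(N\cap B_n)$.
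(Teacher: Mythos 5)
Your torus idea is sound as far as it goes, and it is genuinely different from the paper's mechanism (the paper never uses weights): the diagonal group \(D\cong(\F_p^\ast)^n\) does normalize \(W_n\), multiplies each \(x^\Lambda\Delta_k\) by a nonzero scalar, hence stabilizes every saturated subgroup and therefore also \(N=N_{W_n}(S)\); and your separation observation is correct (inside one \(B_k\) the \(p\)-degree determines the monomial, while for \(k\ne j\) the exponent of \(\alpha_{\max(k,j)}\) in the two characters is \(0\) in one case and \(-1\not\equiv 0\pmod{p-1}\) in the other, since \(p\) is odd). The gap is in the heart of the argument, where you must show that a single leading monomial of \(w\in N\) normalizes \(S\), and both devices you invoke for this fail. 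First, the claimed additivity of \(f\mapsto[f,s]\) on \(B_k\) is false precisely in the hard case: when \(s=g\Delta_i\) with \(i>k\), formula~\eqref{taylor} gives \([f\Delta_k,g\Delta_i]=-\sum_{j}\tfrac1{j!}\tfrac{\partial^j g}{\partial x_k^j}f^j\Delta_i\), which involves the powers \(f^j\) with \(j\ge 2\) and is not additive in \(f\); so the contributions of the distinct monomials of \(w\) to \([w,s]\) do not separate at all. Second, even in the additive case (\(s\) of lower index) the contributions can overlap and cancel: for instance \([(x_1^2x_2-x_1x_2)\Delta_3,\Delta_1]=2x_1x_2\Delta_3\), in which the term \(x_2\Delta_3\) produced by \(x_1^2x_2\Delta_3\) is cancelled by the contribution of \(-x_1x_2\Delta_3\). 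Hence ``every monomial of \([w,s]\) lies in \(S\)'' (which is all that saturation of \(S\) gives you) does not imply that every monomial of \([x^\Lambda\Delta_k,s]\) lies in \(S\); you would need a no-interference lemma at the level of individual monomials, and none is given.

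The second, deeper problem is that the ``downward induction on \(p\)-degree run simultaneously for \(S\) and for the candidate monomials of \(N\)'' is never formulated as an actual induction: to conclude \(x^\Lambda\Delta_k\in N\) you must put the \emph{entire} commutator \([x^\Lambda\Delta_k,s]\), lower Taylor terms included, inside \(S\), and an arbitrary saturated subgroup \(S\) has no downward closure in \(p\)-degree (take \(S=T=\langle\Delta_1,\dots,\Delta_n\rangle\)), so ``having strictly smaller \(p\)-degree'' does not place those terms in \(S\), and no hypothesis about other elements of \(N\) bears on them. Your eigenspace step suffers from a related circularity: \(N\) is only a group, not a subspace, so Fourier projection is available either on \(N\cap B_k\) (which presupposes the splitting) or on the graded quotients \(\gamma_i(W_n)/\gamma_{i+1}(W_n)\), where it only yields the leading monomial modulo \(\gamma_{i+1}(W_n)\), i.e.\ membership in \(N\gamma_{i+1}(W_n)\) rather than in \(N\). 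Note how the paper sidesteps every one of these issues by never isolating monomials: it peels off one base component at a time, writing \(g=hg_k\) and \([g,s]=[h,s]^{g_k}[g_k,s]\), uses property (1) of Definition~\ref{def:saturated} for \(S\) to split this product along the base subgroups so that the whole commutator \([g_k,s]\) (or \([\bar h g_k,s]\)) lands in \(S\) at once, Taylor terms included, and settles the only remaining interference question by a variable-degree count (the degree of \(x_j\) in every monomial of \([g_k,s]\) equals its degree in \(s\), whereas in the factors \([h_j,s]^{\cdots}\) it is strictly smaller). Without an analogue of that count, and without a mechanism forcing the lower Taylor terms into \(S\), your outline does not close.
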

	\begin{proof}
		Let  $g=hg_k\in N_{W_n}(S)$ with $ g_k\in B_k\setminus \Set{1}$ and $h\in B_{k+1}\cdots B_n$. Since $S$ is saturated, we have that the condition \(g\in N_{W_n}(S)\) is equivalent to require that \([g,s]\in S\) for every $s\in B_i\cap S$ and all $i\in \Set{1,\dots,n}$. Notice that in order to prove that \(N_{W_n}(S)\) is saturated,  it is enough to prove that $g_k\in N_{W_n}(S)$ since then \(h=gg_k^{-1}\in N_{W_n}(S)\) and we can argue by induction on \(k\). Let \(s\in B_i\cap S\), we have 
		\begin{equation*}
			S\ni [g,s]=[h,s]^{g_k}[g_k,s].
		\end{equation*}
		
		If $k=i$, then $[g_k,s]=1$ and we are done.
		
		Without lost of generality we can suppose $[g_k,s]\neq 1$.
		If $k> i$, then $[h,s]^{g_k}\in B_{k+1}\cdots B_n$ and $[g_k,s]\in B_k$. In particular, since $S$ is saturated, we get $[g_k,s]\in S$.
		
		If $k<i$, let $\bar h \in B_{k+1}\cdots B_{i-1}$ and $\hat h\in B_{i}\cdots B_n$ such that $h=\hat h\bar h$. We have that
		\begin{equation*}
			S\ni [g,s]=[\hat h\bar h g_k,s]=[\hat h, s]^{\bar h g_k}[\bar h g_k,s]
		\end{equation*}
		and that $[\bar h g_k,s]\in B_i$, $[\hat h, s]^{\bar h g_k}\in B_{i+1}\cdots B_n$. It follows that $[\bar h g_k,s]\in B_i\cap S$. Let now consider the decomposition $\bar h=h_{i-1}\cdots h_{k+1}$ with $h_j\in B_j$. We get that
		\begin{equation*}
			S\ni [\bar h g_k,s]=[h_{i-1},s]^{h_{i-2}\cdots h_{k+1}g_k}\cdots [h_{k+1},s]^{g_k}[g_k,s].
		\end{equation*}
		Notice that, on the one hand the variable $x_j$ appears in each monomial element of $[g_k,s]$ with degree equal to the degree of the variable $x_j$ in $s$. On the other hand, the variable $x_j$ appears in each monomial element of $[h_j,s]^{h_{j-1}\dots h_{k+1}g_k}$ with degree strictly less than the degree of the variable $x_j$ in $s$. Consequently, each monomial element in the decomposition of the commutator $[g_k,s]$ differs from each monomial element in the decomposition of the commutators $[h_j,s]^{h_{j-1}\dots h_{k+1}g_k}$, for $j={k+1},\dots i-1$.  Thus, since \(S\) is saturated, each monomial elements of the decomposition $[g_k,s]$ belongs to $S$, proving the statement.
	\end{proof}
	Bearing in mind  the study of the idealizer chain introduced in~\cite{modular}, we will compute the growth normalizer chain of Equation~\eqref{eq:normalizerchain} using the function \(\phi\) previously introduced. 
	\begin{definition}
	If $\mathfrak{U}$ is a subset of $\mathfrak{B}$, then its idealizer is deﬁned as
	$$\mathfrak{N}_\mathfrak{B} (\mathfrak{U}) = \Set{b \in \mathfrak{B} \mid [b, u] \in \F_p \mathfrak{U} \text{ for all } h \in \mathfrak{U}}.$$
	\end{definition}
	
	We refer to \cite[Theorem~2.5]{modular} for a proof of the following result.
	\begin{theorem}
	Let $\mathfrak{H}$ be a homogeneous subring of $\Lie_n$ having basis $\mathfrak{U}\subseteq \mathfrak{B}$. The
	idealizer $\mathfrak{N}_{\Lie_n}(\mathfrak{H})$ of $\mathfrak{H}$ in $\Lie_n$ is the homogeneous subring of $\Lie_n$ spanned over $\F_p$ by $\mathfrak{N}_\mathfrak{B} (\mathfrak{U})$.
	\end{theorem}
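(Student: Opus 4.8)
The plan is to prove the two inclusions
$$\Span_{\F_p}\bigl(\mathfrak{N}_\mathfrak{B}(\mathfrak{U})\bigr)\subseteq \mathfrak{N}_{\Lie_n}(\mathfrak{H})\subseteq \Span_{\F_p}\bigl(\mathfrak{N}_\mathfrak{B}(\mathfrak{U})\bigr).$$
The first inclusion is immediate: if $b\in\mathfrak{N}_\mathfrak{B}(\mathfrak{U})$ then $[b,u]\in\F_p\mathfrak{U}\subseteq\mathfrak{H}$ for every $u\in\mathfrak{U}$, and since $\mathfrak{U}$ spans $\mathfrak{H}$, bilinearity of the bracket gives $[b,\mathfrak{H}]\subseteq\mathfrak{H}$, i.e. $b\in\mathfrak{N}_{\Lie_n}(\mathfrak{H})$; as the idealizer is a subspace, the span of $\mathfrak{N}_\mathfrak{B}(\mathfrak{U})$ is contained in it. The whole content of the theorem is therefore the reverse inclusion, which amounts to showing that the idealizer is homogeneous, namely that every basis component of an element of $\mathfrak{N}_{\Lie_n}(\mathfrak{H})$ already lies in $\mathfrak{N}_\mathfrak{B}(\mathfrak{U})$.

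The tool I would isolate first is the following \emph{no-collision} property of the bracket: for a fixed basis element $u=x^\Theta\partial_j\in\mathfrak{B}$, the map $b\mapsto [b,u]$ sends distinct basis elements $b\in\mathfrak{B}$ with $[b,u]\neq 0$ to scalar multiples of \emph{distinct} basis elements. Granting this, the reverse inclusion is quick. Writing $x=\sum_{b\in\mathfrak{B}}c_b\,b\in\mathfrak{N}_{\Lie_n}(\mathfrak{H})$ with $c_b\in\F_p$, for each $u\in\mathfrak{U}$ the element $[x,u]=\sum_b c_b[b,u]$ is a linear combination of scalar multiples of pairwise distinct basis elements, so no cancellation among the surviving terms can occur. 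Since $[x,u]\in\mathfrak{H}=\Span_{\F_p}(\mathfrak{U})$ and $\mathfrak{H}$ is a coordinate subspace with respect to the basis $\mathfrak{B}$, each nonzero term $c_b[b,u]$ must itself be a multiple of an element of $\mathfrak{U}$; hence $[b,u]\in\F_p\mathfrak{U}$ for every $b$ with $c_b\neq 0$ and every $u\in\mathfrak{U}$, which is exactly the statement $b\in\mathfrak{N}_\mathfrak{B}(\mathfrak{U})$. Thus $x\in\Span_{\F_p}(\mathfrak{N}_\mathfrak{B}(\mathfrak{U}))$.

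It remains to establish the no-collision property, and this is where the special shape of the basis enters decisively. Recall that $[x^\Lambda\partial_k,x^\Theta\partial_j]$ equals $\partial_j(x^\Lambda)x^\Theta\partial_k$ when $j<k$ and $-x^\Lambda\partial_k(x^\Theta)\partial_j$ when $j>k$, so each nonzero bracket with $u$ is a scalar multiple of a single basis element, whose derivation index $\ell$ is either $k$ (when $k>j$, forcing $\ell=k>j$) or $j$ (when $k<j$). Suppose $[b_1,u]$ and $[b_2,u]$ are nonzero multiples of the same $v=x^\Phi\partial_\ell$, with $b_i=x^{\Lambda_i}\partial_{k_i}$. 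If $\ell>j$ then both $b_i$ have index $k_i=\ell$ and the monomial identity $x^{\Lambda_i+\Theta-e_j}=x^\Phi$ pins down $\Lambda_i$ uniquely, so $b_1=b_2$. If $\ell=j$ then both $b_i$ have $k_i<j$, and comparing exponents in $\Phi=\Lambda_i+\Theta-e_{k_i}$ gives $\Phi_m=\Theta_m$ for $m>k_i$ and $\Phi_{k_i}=\Theta_{k_i}-1$; the crucial point is that $\Lambda_i\in\mathcal{P}_p(k_i-1)$ is supported on indices strictly below $k_i$, which is precisely what forces $k_i=\max\{m\mid \Phi_m\neq\Theta_m\}$ to be determined by the pair $(\Phi,\Theta)$ alone. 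Hence $k_1=k_2$ and then $\Lambda_1=\Lambda_2$, so again $b_1=b_2$.

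I expect this last step to be the heart of the argument and the only genuinely delicate point: the verification that the triangular constraint $\Lambda\in\mathcal{P}_p(k-1)$ rules out two distinct basis elements from colliding onto the same target under bracketing with a fixed $u$. This is exactly the feature that fails for a general graded Lie algebra and that makes the homogeneity of the idealizer true here. Note that a grading argument alone cannot replace it: by the degree-additivity recorded in Lemma~\ref{lem:phi i+j}, any two colliding basis elements necessarily share the same $\pdeg$, hence lie in the same graded component and cannot be separated by homogeneity of the grading.
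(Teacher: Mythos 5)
Your proof is correct. Note first that the paper itself contains no proof of this statement: it is imported from the cited reference (\cite[Theorem~2.5]{modular}), so there is no in-paper argument to compare against; your proposal effectively supplies the omitted proof. On its own merits it is sound and complete. The easy inclusion by bilinearity is fine, and your reduction of the hard inclusion to the ``no-collision'' property is exactly the right structural point: since every nonzero bracket of two basis elements is a nonzero scalar multiple of a single basis element, injectivity of $b\mapsto[b,u]$ on $\{b\in\mathfrak{B}\mid [b,u]\ne 0\}$ guarantees that no cancellation in $[x,u]=\sum_b c_b[b,u]$ can conceal a component lying outside $\Span_{\F_p}(\mathfrak{U})$, and coordinate subspaces are then detected termwise. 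Your case analysis for the collision argument is exhaustive, since a nonzero bracket with $u=x^\Theta\partial_j$ lands in $\mathfrak{B}_\ell$ with either $\ell=k>j$ or $\ell=j$ (the case $k=j$ gives zero because $\Lambda,\Theta$ are supported strictly below $k=j$), and both cases are handled correctly: for $\ell>j$ the exponent identity $\Lambda_i=\Phi-\Theta+e_j$ pins down $b_i$, and for $\ell=j$ the support constraint $\Lambda_i\in\mathcal{P}_p(k_i-1)$ forces $k_i=\max\{m\mid\Phi_m\ne\Theta_m\}$, which is determined by $(\Phi,\Theta)$ alone. One immaterial loose end: the statement also asserts that the span of $\mathfrak{N}_\mathfrak{B}(\mathfrak{U})$ is a sub\emph{ring}; this is automatic once the set equality is proved, because the idealizer of a subspace is closed under the bracket by the Jacobi identity, so nothing is missing in substance.
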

	The following result is a technical lemma aiming to intertwine idealizers and normalizers.
	\begin{lemma}\label{lem: ciserve}
		Let $H$ be a saturated subgroup of $W_n$ and let $n\in B_\ell$. If $\lt([n,h])\in H$ for all $h\in H$, then $[n,h]\in H$.
	\end{lemma}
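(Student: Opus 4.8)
The plan is to reduce first to the case where $h$ is a single power monomial and then to run an induction on the $p$-degree of the commutator. Since $H$ is saturated it is generated by the power monomials it contains (Definition~\ref{def:saturated}), so every $h\in H$ factors as $h=m_1\cdots m_r$ with $m_i\in H\cap\mathcal{B}$. Using the identity $[n,ab]=[n,b]\,[n,a]^{b}$ together with the fact that $H$ is a subgroup (hence closed under conjugation by its own elements), one sees that once $[n,m]\in H$ is known for every power monomial $m\in H$, it follows that $[n,h]\in H$ for all $h\in H$. Thus I may assume $h=x^{\Theta}\Delta_s\in H$. If $s=\ell$ the two elements lie in the abelian subgroup $B_\ell$ and $[n,h]=1$, so I take $s\neq\ell$; by \eqref{taylor} the commutator $[n,h]$ then lies entirely in the single base subgroup $B_m$ with $m=\max(\ell,s)$. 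Since distinct power monomials of $B_m$ have distinct $p$-degrees, I write $[n,h]=\sum_k c_k w_k$ with $w_1,w_2,\dots$ power monomials of strictly decreasing $p$-degree, and by saturation $[n,h]\in H$ if and only if every $w_k$ lies in $H$.

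Next I would argue by strong induction on $\pdeg([n,h])$. The leading term $w_1=\lt([n,h])$ lies in $H$ by hypothesis, so the point is to show that the remaining part $[n,h]\,w_1^{-c_1}$, which has strictly smaller $p$-degree, also lies in $H$. The guiding principle is a descent driven by the explicit expansion \eqref{taylor}: each lower monomial of $[n,h]$ should be recovered, up to a nonzero scalar, as the leading term $\lt([n,v])$ of the commutator of $n$ with a power monomial $v\in H$ of strictly smaller $p$-degree. Concretely, from $w_1\in H$ the standing hypothesis returns $\lt([n,w_1])\in H$, and by \eqref{rmk:commutators} one has $\pdeg([n,w_1])<\pdeg(w_1)$, so the induction hypothesis gives $[n,w_1]\in H$; passing (again by saturation) to the constituent power monomials of $[n,w_1]$ feeds the next stage of the descent. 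Iterating along the strictly decreasing chain of $p$-degrees, which terminates below $\mu_m$, one obtains every $w_k\in H$, whence $[n,h]\in H$ and the induction closes.

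The step I expect to be the main obstacle is the precise bookkeeping of the lower part when $n=g\Delta_\ell$ is a genuine polynomial rather than a single power monomial, and when the reductions modulo $x_i^{p}-x_i$ intervene in the terms $f_i^{\,j}$ of \eqref{taylor}: besides the monomials produced along the single chain issued from $\lt(n)$, the lower monomials of $g$ contribute further terms, and one must verify that these too arise as leading commutators of $n$ with strictly smaller elements of $H$. I would control this by a secondary induction organized by the output $p$-degree, and I would treat the case $\ell>s$ separately, where $[n,\,\cdot\,]$ is no longer $\F_p$-linear in its argument and the one-step reductions have to be read off from the $(x^{\Theta})^{j}$ contributions in \eqref{taylor}; in every stage saturation is used to move freely between an element of $H$ and its constituent power monomials. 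Finally it is worth stressing that no purely formal version of the statement can hold — a degree-lowering linear map may send each element of $H$ to something whose leading term lies in $H$ yet whose tail escapes $H$ — so it is precisely the derivative structure encoded in \eqref{taylor} that makes the descent possible.
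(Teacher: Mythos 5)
Your reduction to a monomial $h=x^\Theta\Delta_s$ (saturation, the identity $[n,ab]=[n,b][n,a]^b$, and closure of $H$ under conjugation by its own elements) is fine, and in the case $\ell<s$ your descent is essentially the paper's argument: there one iterates the standing hypothesis, first on $h$, then on $\lt([n,h])\in H$, and so on, each step producing the next Taylor term of $[n,h]$ because $\partial f/\partial x_\ell=0$ forces $\frac{\partial}{\partial x_\ell}\bigl(\frac{\partial x^\Theta}{\partial x_\ell}\,f\bigr)=\frac{\partial^{2} x^\Theta}{\partial x_\ell^{2}}\,f$. The genuine gap is the case $\ell>s$, and it cannot be patched inside your framework. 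There $[n,h]=\sum_{j\ge 1}\frac{1}{j!}\frac{\partial^{j}f}{\partial x_s^{j}}(x^\Theta)^{j}\Delta_\ell$ lies in $B_\ell$, the \emph{same} base subgroup as $n=f\Delta_\ell$, so every constituent monomial $w_k$ of $[n,h]$ commutes with $n$ and all the commutators $[n,w_k]$ that drive your descent are identically trivial. Worse, your guiding principle is provably false in this case: for any $v=x^\Xi\Delta_{s'}$ the leading term of $[n,v]$ is a monomial of $\frac{\partial f}{\partial x_{s'}}x^\Xi$, hence involves only a first derivative of $f$, while the tail of $[n,h]$ involves $\frac{\partial^{2}f}{\partial x_s^{2}}(x^\Theta)^{2}$ and higher. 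Concretely, take $p=3$, $n=x_1x_2^2\Delta_3$, $h=x_1\Delta_2$: then $[n,h]=2x_1^2x_2\Delta_3+x_1\Delta_3$, and $x_1\Delta_3$ is not $\lt([n,v])$ for any $v$ whatsoever, since every such leading term contains $x_2$. So neither the hypothesis nor your strong induction can ever certify the tail; your remark that the reductions ``have to be read off from the $(x^\Theta)^j$ contributions'' names the difficulty without supplying a mechanism.

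The missing idea, which is exactly what the paper uses in this case, is that $H$ is a \emph{subgroup}, hence closed under commutators of its own elements: once $w_1=\lt([n,h])\in H$ is known, $[w_1,h]\in H$ comes for free because both entries lie in $H$. Since $x^\Theta$ does not involve $x_s$, formula \eqref{taylor} expands $[w_1,h]$ in the terms $\frac{\partial^{j+1}f}{\partial x_s^{j+1}}(x^\Theta)^{j+1}\Delta_\ell$, $j\ge 1$ --- precisely the missing tail, with a triangular, unit-diagonal coefficient pattern --- and iterating $[\,\cdot\,,h]$ inside $H$ plus a downward induction (using saturation) extracts $\frac{\partial^{j}f}{\partial x_s^{j}}(x^\Theta)^{j}\Delta_\ell\in H$ for all $j\ge 2$, whence $[n,h]\in H$. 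In the toy example above this is one line: $[x_1^2x_2\Delta_3,x_1\Delta_2]=x_1\Delta_3\in H$. With this repair for $\ell>s$ your plan matches the paper's proof; the bookkeeping you flag for non-monomial $n$ is a real but secondary issue, which the paper sidesteps by running both mechanisms directly on the whole Taylor terms of $n=f\Delta_\ell$ rather than on single monomials.
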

	\begin{proof}
		Since $H$ is a saturated subgroups, without loss of generality we may assume that \(h=g\Delta_i\in B_i\) for some \(i\), and  $n=f\Delta_\ell$.
		If $\ell>i$, then
		\begin{equation*}
			[n,h]=\sum_{s=1}^{p-1} \dfrac{1}{s!}\dfrac{\partial^s f}{\partial x_i^s}g^s\Delta_\ell
		\end{equation*}
		and $\lt([n,h])=\dfrac{\partial f}{\partial x_i}g\Delta_\ell\in H$. The statement follows noting that $ [\lt([n,h]),h]\in H$ so that \(\dfrac{\partial^s f}{\partial x_i^s}g^s\Delta_\ell \in H \) for  \(s=2,\dots,p-1\).
		If $\ell<i$, then
		\begin{equation*}
			[n,h]=\sum_{s=1}^\infty \dfrac{1}{s!}\dfrac{\partial^s g}{\partial x_\ell^s}f^s\Delta_i
		\end{equation*}
		and $\lt([n,h])=\dfrac{\partial g}{\partial x_\ell}f\Delta_i\in H$. By hypothesis, $\lt\bigl([n,\lt([n,h])]\bigr)=\dfrac{\partial^2 g}{\partial x_\ell^2}f^2\Delta_i\in H$. Iterating the process we obtain the desired result.
	\end{proof}
\begin{proposition}\label{prop:normideal}
	Let $H$ be a saturated subgroup of $W_n$. The following equality holds
	\begin{equation*}
		(N_{W_n}(H))^\phi=\mathfrak{N}_{\Lie_n}(H^\phi).
	\end{equation*}
\end{proposition}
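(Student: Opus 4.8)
The plan is to reduce the identity to a comparison of monomial bases. Since $H$ is saturated, Proposition~\ref{prop:normsaturo} tells us that $N_{W_n}(H)$ is saturated as well, so $(N_{W_n}(H))^\phi$ is the homogeneous Lie subring spanned by $\phi(N_{W_n}(H)\cap\mathcal{B})$, that is, by the set $\Set{x^\Theta\partial_\ell \mid x^\Theta\Delta_\ell\in N_{W_n}(H)\cap\mathcal{B}}$. On the other side, writing $\mathfrak{U}=\Set{x^\Lambda\partial_k \mid x^\Lambda\Delta_k\in H\cap\mathcal{B}}$ for the basis of $H^\phi$ (the correspondence $x^\Lambda\Delta_k\leftrightarrow x^\Lambda\partial_k$ being a bijection between the nontrivial monomials of $H\cap\mathcal{B}$ and $\mathfrak{U}$), the idealizer description recalled above (\cite[Theorem~2.5]{modular}) identifies $\mathfrak{N}_{\Lie_n}(H^\phi)$ with the $\F_p$-span of $\mathfrak{N}_{\mathfrak{B}}(\mathfrak{U})$. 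As both sides are homogeneous, it suffices to establish the equivalence
\[
x^\Theta\Delta_\ell\in N_{W_n}(H)\iff x^\Theta\partial_\ell\in\mathfrak{N}_{\mathfrak{B}}(\mathfrak{U})
\]
for every $x^\Theta\Delta_\ell\in\mathcal{B}$.

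The bridge between the two commutator structures is Lemma~\ref{lem:phi i+j}: whenever $[x^\Theta\Delta_\ell,x^\Lambda\Delta_k]\neq 1$ it gives $\phi(\lt([x^\Theta\Delta_\ell,x^\Lambda\Delta_k]))=[x^\Theta\partial_\ell,x^\Lambda\partial_k]$, while a trivial group commutator matches a vanishing Lie bracket. Because $H$ is saturated, the leading term of a monomial commutator lies in $H$ precisely when its $\phi$-image lies in $\F_p\mathfrak{U}$. For the forward implication, assume $x^\Theta\Delta_\ell\in N_{W_n}(H)$; then $[x^\Theta\Delta_\ell,x^\Lambda\Delta_k]\in H$ for every monomial $x^\Lambda\Delta_k\in H\cap\mathcal{B}$, and since $H$ is saturated its leading term is a scalar multiple of a monomial of $H\cap\mathcal{B}$. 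Applying $\phi$ yields $[x^\Theta\partial_\ell,x^\Lambda\partial_k]\in\F_p\mathfrak{U}$ for every $x^\Lambda\partial_k\in\mathfrak{U}$, whence $x^\Theta\partial_\ell\in\mathfrak{N}_{\mathfrak{B}}(\mathfrak{U})$.

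For the converse, suppose $x^\Theta\partial_\ell\in\mathfrak{N}_{\mathfrak{B}}(\mathfrak{U})$, so $[x^\Theta\partial_\ell,x^\Lambda\partial_k]\in\F_p\mathfrak{U}$ for all $x^\Lambda\partial_k\in\mathfrak{U}$. Reading the correspondence above backwards, this says exactly that $\lt([x^\Theta\Delta_\ell,x^\Lambda\Delta_k])\in H$ for every monomial $x^\Lambda\Delta_k\in H\cap\mathcal{B}$. Setting $n=x^\Theta\Delta_\ell\in B_\ell$, Lemma~\ref{lem: ciserve} upgrades this control of leading terms to $[x^\Theta\Delta_\ell,h]\in H$ for all $h\in H$; consequently $x^\Theta\Delta_\ell$ conjugates $H$ into itself, and by the reduction to monomial generators used in the proof of Proposition~\ref{prop:normsaturo} we conclude $x^\Theta\Delta_\ell\in N_{W_n}(H)$. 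Combining the two implications gives the claimed equality of homogeneous subrings.

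The main obstacle is the mismatch between the group commutator, whose Taylor expansion \eqref{taylor} carries lower $p$-degree tails, and the Lie bracket, which records only the leading term. Lemma~\ref{lem: ciserve} is precisely the device that closes this gap, turning containment of leading terms into containment of full commutators; the remaining care lies in justifying, in both directions, the passage between arbitrary elements of the saturated subgroup $H$ and its monomial generators in $H\cap\mathcal{B}$.
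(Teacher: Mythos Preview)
Your proof is correct and follows essentially the same route as the paper's: both reduce to monomial bases via saturation (Proposition~\ref{prop:normsaturo} and \cite[Theorem~2.5]{modular}), use Lemma~\ref{lem:phi i+j} as the bridge between group and Lie commutators for the forward inclusion, and invoke Lemma~\ref{lem: ciserve} to upgrade leading-term containment to full containment for the reverse inclusion. Your presentation is slightly more explicit in isolating the monomial-level biconditional and in flagging the Taylor-tail obstacle that Lemma~\ref{lem: ciserve} resolves, but the underlying argument is the same.
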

\begin{proof}
	By Proposition~\ref{prop:normsaturo} we know that $N_{W_n}(H)$ is a saturated subgroup of $W_n$. Let $n\in N_{W_n}(H)\cap \mathcal{B}$ and $i$ an integer such that $n\in \gamma_i(W_n)\setminus\gamma_{i+1}(W_n)$. For every $h\in H\cap \mathcal{B}$, there exists an integer $j$ such that $h\in \gamma_j(W_n)\setminus \gamma_{j+1}(W_n)$ and we have the following equality by Lemma~\ref{lem:phi i+j}
	\begin{equation*}
		\phi([n,h])=\phi_{i+j}([n,h])=[\phi_i(n),\phi_j(h)].
	\end{equation*}
Since $\phi([n,h])\in H^\phi$ for all $h\in H\cap\mathcal{B}$, it follows that $\phi_i(n)=\phi(n)\in \mathfrak{N}_{\Lie_n}(H^\phi)$.
	
    We prove now the opposite inclusion. Let \(t \in \mathfrak{N}_{\Lie_n}(H^\phi)\cap \mathfrak{B}\). For some positive integer \(i\), there exists \(n\in \mathcal{B}\cap \bigl(\gamma_i(W_n)\setminus \gamma_{i+1}(W_n)\bigr)\) such that \(\phi(n)=t\). For all $\phi(h)\in H^\phi\cap\mathfrak{B}$ there exists an integer $j$ such that $\phi(h)=\phi_j(h)$ and 
    \begin{equation*}
    	H^\phi\ni [\phi_i(n),\phi_j(h)]=\phi_{i+j}([n,h]=\phi(\lt([n,h]))).
    \end{equation*}
    Thus, $\lt([n,h])\in H$ for all $h\in H$ and, by Lemma~\ref{lem: ciserve}, we have $[n,h]\in H$.
\end{proof}
\begin{remark}
By Proposition~\ref{prop:normideal}, we obtain that the correspondence sending \(H\) to \(H^\phi\) maps normal saturated subgroups of \(W_n\) into homogeneous ideals of \(\Lie_n\). Moreover, we define a new map \(\epsilon\colon \mathfrak{B}\to \mathcal{B}\) by \(x^\Lambda\partial_k\mapsto x^\Lambda\Delta_k\). If \(\mathfrak{I}\) is an homogeneous ideal of \(\Lie_n\), we denote by \(\mathfrak{I}^\epsilon\) the saturated subgroup of \(W_n\) generated by \(\epsilon(\mathfrak{I}\cap \mathfrak{B})\). Since \[\phi[\epsilon(x^\Lambda\partial_k),\epsilon(x^\Theta\partial_h)]=[\phi\epsilon(x^\Lambda\partial_k),\phi \epsilon(x^\Theta\partial_h)]=[x^\Lambda\partial_k,x^\Theta\partial_h],\] it follows that \(\mathfrak{I}^\epsilon\) is a normal saturated subgroup of \(W_n\) such that \( \bigl(\mathfrak{I}^\epsilon\bigr)^\phi=\mathfrak{I}\). Similarly, if \(N\) is a saturated normal subgroup of \(W_n\), then \(\bigl(N^\phi\bigr)^\epsilon=N\). This shows that the maps \((\cdot)^\phi\) and \((\cdot)^\epsilon\) realize a bijection between the poset of normal saturated subgroups of \(W_n\) and the poset of homogeneous ideals of \(\Lie_n\).

\end{remark}

By Remark~\ref{rmk: |S|} and Proposition~\ref{prop:normideal} we have that
\begin{equation}\label{eq: idealizzanti e normalizzanti}
	|\mathfrak{N}_{\Lie_n}(H^\phi)|=|N_{W_n}(H)|.
\end{equation}
Thus, the growth of the normalizer chain defined in Equation~\eqref{eq:normalizerchain} is equal to the growth of the following idealizer chain 
\begin{align*}
\mathfrak{N}_j=\begin{cases}
		\mathfrak{T}&\text{ if }j=-1\\
		\mathfrak{N}_{\Lie_n}(\mathfrak{N}_{j-1})&\text{ if }j\ge 0
	\end{cases}
\end{align*}
where $\mathfrak{T}$ is the homogeneous subring of $\Lie_n$ spanned by the set $\Set{\partial_1,\dots,\partial_n}$, which has been already described in \cite{modular}.
More in details, let $t_{p,i}$ be the number of partitions of $i$ into at least two parts, where each part can be repeated at most $p-1$ times, and 
\(q_{p,i}=\sum_{j=1}^it_{p,j}\).
The growth of the idealizer chain is then given in the following result.
\begin{theorem}\cite[Theorem~2.15]{modular}
		Let \(1\le      i\le      n-1\).  The $\F_p$-vector space
		 \(\mathfrak{N}_{i}/\mathfrak{N}_{i-1}\)  has
		dimension \(q_{p,i+1}\).
\end{theorem}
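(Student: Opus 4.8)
The statement computes the growth of the idealizer chain in $\Lie_n$, and by \eqref{eq: idealizzanti e normalizzanti} this is exactly what governs the normalizer chain \eqref{eq:normalizerchain}; so the plan is to count, level by level, the basis monomials absorbed by $\{\mathfrak{N}_j\}$. Since the idealizer of a homogeneous subring is homogeneous (by the idealizer theorem recalled above), every $\mathfrak{N}_j$ is homogeneous, and each quotient $\mathfrak{N}_i/\mathfrak{N}_{i-1}$ has an $\F_p$-basis consisting of the monomials $x^\Lambda\partial_k\in\mathfrak{B}$ of \emph{level} exactly $i$, where $\lev(x^\Lambda\partial_k)\defeq\min\{j:x^\Lambda\partial_k\in\mathfrak{N}_j\}$. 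Thus $\dim_{\F_p}(\mathfrak{N}_i/\mathfrak{N}_{i-1})$ is the number of basis monomials of level $i$, and the whole problem is to compute this level function and then count.

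First I would pin down the level. A direct computation from the bracket $[x^\Lambda\partial_k,x^\Theta\partial_h]=\partial_h(x^\Lambda)x^\Theta\partial_k-x^\Lambda\partial_k(x^\Theta)\partial_h$ gives the seed values $\lev(\partial_k)=-1$ and $\lev(x_j\partial_k)=0$ for $j<k$. For $|\Lambda|\ge2$ I claim
\begin{equation*}
	\lev(x^\Lambda\partial_k)=\wt(\Lambda)+(n-k)-1\qquad\text{whenever }\wt(\Lambda)\le k,
\end{equation*}
the side condition $\wt(\Lambda)\le k$ being equivalent to $\lev(x^\Lambda\partial_k)\le n-1$, i.e.\ to staying in the stated range of $i$. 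I would prove this by induction, describing $\mathfrak{N}_j\cap\mathfrak{B}$ explicitly. The step that fixes the level is the single bracket $[x^\Lambda\partial_k,x_k\partial_{k+1}]=-x^\Lambda\partial_{k+1}$, which returns the same partition with its index raised and is therefore one level lower; this forces $x^\Lambda\partial_k$ to enter exactly one idealizer step after $x^\Lambda\partial_{k+1}$, and is the origin of the summand $(n-k)$. The crux, and the main obstacle, is to show that no \emph{other} bracket escapes prematurely: a bracket against a test element $x^\Theta\partial_h\in\mathfrak{N}_{i-1}$ transfers weight between variables and produces something of level $\le i-1$ precisely when $\wt(\Lambda)\le k$. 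Verifying this absorption is exactly where the hypothesis $i\le n-1$ is used; outside that range weight-transfer brackets such as $[x_1^2x_2\partial_3,x_1^2\partial_2]=x_1^4\partial_3$ do escape and the formula fails.

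Granting the level formula, the count is a clean bijection. A basis monomial $x^\Lambda\partial_k$ with $|\Lambda|\ge2$ is the datum of a partition $\Lambda$ into at least two parts, each part occurring at most $p-1$ times, together with an index $k$; fixing $i$ with $1\le i\le n-1$, the equation $\wt(\Lambda)+(n-k)-1=i$ determines $k=n-i-1+\wt(\Lambda)$ uniquely, and one checks that the admissibility constraints (maximal part ${}<k\le n$ together with the side condition $\wt(\Lambda)\le k$) hold automatically when $2\le\wt(\Lambda)\le i+1$ and $i\le n-1$, using that a partition into at least two parts has weight at least one more than its maximal part. Hence the monomials of level $i$ are in bijection with the admissible partitions of weight between $2$ and $i+1$, and their number is
\begin{equation*}
	\sum_{w=2}^{i+1}t_{p,w}=\sum_{w=1}^{i+1}t_{p,w}=q_{p,i+1},
\end{equation*}
since $t_{p,1}=0$. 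This reproves \cite[Theorem~2.15]{modular}, which is the form in which we invoke it.
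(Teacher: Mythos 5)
First, a point of context: the paper you are working from does not prove this statement at all. It is imported verbatim from \cite[Theorem~2.15]{modular}; the paper's only contribution surrounding it is the transfer to the group side via Proposition~\ref{prop:normideal} and Equation~\eqref{eq: idealizzanti e normalizzanti}. So your proposal is an attempt to reprove the cited result from scratch. Your architecture is the right one, and the parts you actually carry out are correct: homogeneity of idealizers does reduce the theorem to counting basis monomials by the level function; your formula \(\lev(x^\Lambda\partial_k)=\wt(\Lambda)+(n-k)-1\) (for partitions with at least two parts, in the range \(\wt(\Lambda)\le k\)) agrees with direct computation in low degrees (e.g.\ it predicts that \(\mathfrak{N}_1\setminus \mathfrak{N}_0\) is spanned by \(x_1^2\partial_n\) alone, matching \(q_{p,2}=1\)); and the final bijective count is sound, including the key observation that a partition into at least two parts has maximal part at most \(\wt(\Lambda)-1\), which makes all admissibility constraints automatic exactly when \(i\le n-1\).

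The genuine gap is the point you yourself flag as ``the crux'': the level formula, equivalently the explicit description of \(\mathfrak{N}_j\cap\mathfrak{B}\), is asserted, not proven, and it is the entire content of the theorem. Two verifications are missing. (i) \emph{Absorption}: if \(\wt(\Lambda)+(n-k)-1\le j+1\), then \emph{every} bracket of \(x^\Lambda\partial_k\) against \emph{every} monomial \(x^\Theta\partial_h\in\mathfrak{N}_j\cap\mathfrak{B}\) lies again in \(\F_p(\mathfrak{N}_j\cap\mathfrak{B})\). This requires a case analysis (\(h<k\) versus \(h>k\); \(\Theta\) empty, a single part, or at least two parts); in the worst cases, where both \(\Lambda\) and \(\Theta\) have at least two parts, the inequality one must check reduces precisely to \(j\le n-2\), so the range hypothesis enters through an actual computation that your proposal never performs. (ii) \emph{Escape}: if \(\wt(\Lambda)+(n-k)-1>j+1\), some bracket must leave \(\F_p(\mathfrak{N}_j\cap\mathfrak{B})\). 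Your index-raising bracket \([x^\Lambda\partial_k,x_k\partial_{k+1}]=-x^\Lambda\partial_{k+1}\) only reduces the case \(k<n\) to the case \(k=n\), and by itself it only yields a lower bound on the level (``no earlier than''), not ``exactly''. For \(k=n\) you give no argument, and this case is genuinely delicate in the truncated algebra: the natural test element \(x_{h-1}\partial_h\) (with \(h\) a part of \(\Lambda\)) gives a \emph{zero} bracket whenever \(\lambda_{h-1}=p-1\), because \(x_{h-1}^{p}=0\) in \(\Lie_n\); and the alternative \(\partial_h\) can drop the weight by too much, or leave a single-part monomial, in both cases landing back inside \(\mathfrak{N}_j\) rather than escaping. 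Supplying (i) and (ii) --- the induction you describe but do not carry out --- is exactly what \cite{modular} does; as written, your proposal is a correct plan plus a correct count, conditional on an unproven description of the chain.
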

By way of Equation~\eqref{eq: idealizzanti e normalizzanti}, this theorem can be immediately restated in the group case 

\begin{theorem}
	Let \(1\le      i\le      n-1\), then $|N_i/N_{i-1}|=p^{q_{p,i+1}}$.
 
\end{theorem}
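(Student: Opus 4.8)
The plan is to transfer the statement to the associated Lie algebra $\Lie_n$, where the growth of the idealizer chain is already known from \cite[Theorem~2.15]{modular}, and to use the dictionary provided by the map $\phi$. Concretely, I would prove by induction on $i$ that every term $N_i$ of the normalizer chain of Equation~\eqref{eq:normalizerchain} is a \emph{saturated} subgroup of $W_n$ and that
\[
	N_i^\phi=\mathfrak{N}_i .
\]
Once this identification is secured, the order equality $|N_i|=|N_i^\phi|=|\mathfrak{N}_i|$ from Remark~\ref{rmk: |S|} (equivalently, Equation~\eqref{eq: idealizzanti e normalizzanti}) reduces the index computation to a dimension count in $\Lie_n$.

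For the base case $i=-1$ I would verify that $T=\langle\Delta_1,\dots,\Delta_n\rangle$ is saturated. Each $\Delta_k$ is a power monomial element of $B_k$ (the one attached to the zero partition), and $T$ is the product of the $\F_p\Delta_k\le B_k$, so both conditions of Definition~\ref{def:saturated} hold trivially. To identify $T^\phi$ I would compute $\pdeg(\Delta_k)=\mu_k=p^{n-1}-p^{k-1}$, which by Lemma~\ref{lem:gammasuiB} places $\Delta_k\in\gamma_{p^{k-1}}(W_n)\setminus\gamma_{p^{k-1}+1}(W_n)$ and hence gives $\phi(\Delta_k)=\partial_k$. Therefore $T^\phi$ is the homogeneous subring spanned by $\{\partial_1,\dots,\partial_n\}$, which is exactly $\mathfrak{T}=\mathfrak{N}_{-1}$.

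For the inductive step, assume $N_{i-1}$ is saturated and $N_{i-1}^\phi=\mathfrak{N}_{i-1}$. Since $N_{i-1}$ is saturated, Proposition~\ref{prop:normsaturo} guarantees that $N_i=N_{W_n}(N_{i-1})$ is again saturated, so the induction can proceed. Moreover, applying Proposition~\ref{prop:normideal} with $H=N_{i-1}$ yields
\[
	N_i^\phi=\bigl(N_{W_n}(N_{i-1})\bigr)^\phi=\mathfrak{N}_{\Lie_n}\!\bigl(N_{i-1}^\phi\bigr)=\mathfrak{N}_{\Lie_n}(\mathfrak{N}_{i-1})=\mathfrak{N}_i,
\]
which closes the induction.

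Finally, I would conclude as follows. Because $N_{i-1}\trianglelefteq N_i$ and $\mathfrak{N}_{i-1}\le\mathfrak{N}_i$, and because Remark~\ref{rmk: |S|} gives $|N_j|=|N_j^\phi|=|\mathfrak{N}_j|$ for $j=i-1,i$, we obtain
\[
	|N_i:N_{i-1}|=\frac{|\mathfrak{N}_i|}{|\mathfrak{N}_{i-1}|}=p^{\dim_{\F_p}(\mathfrak{N}_i/\mathfrak{N}_{i-1})}=p^{q_{p,i+1}},
\]
where the last equality is \cite[Theorem~2.15]{modular}. The bulk of the technical effort has already been absorbed into Propositions~\ref{prop:normsaturo} and~\ref{prop:normideal} (the latter resting on Lemma~\ref{lem: ciserve}); in this final argument the only delicate point is keeping saturation propagated along the whole chain so that those propositions remain applicable at each step, and this is precisely what the base-case computation together with Proposition~\ref{prop:normsaturo} ensure. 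The genuine obstacle therefore lies not in this concluding induction but in establishing the normalizer–idealizer correspondence recorded in Equation~\eqref{eq: idealizzanti e normalizzanti}.
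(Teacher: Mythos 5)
Your proposal is correct and follows essentially the same route as the paper: the paper likewise reduces the theorem to the idealizer-chain result of \cite[Theorem~2.15]{modular} via Remark~\ref{rmk: |S|}, Proposition~\ref{prop:normsaturo} and Proposition~\ref{prop:normideal}, identifying $T^\phi=\mathfrak{T}$ and matching the two chains term by term. Your only addition is to spell out explicitly the induction (saturation of each $N_i$ and the identification $N_i^\phi=\mathfrak{N}_i$) that the paper leaves implicit in the phrase ``the growth of the normalizer chain is equal to the growth of the idealizer chain,'' which is a faithful completion rather than a different argument.
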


The first numbers of the sequences $t_{p,i}$ and $q_{p,i}$ for $p=3,5$ are available in OEIS~\cite{OEIS}, respectively under the labels~\href{https://oeis.org/A000726}{A000726} and~\href{https://oeis.org/A317910}{A317910}.

%

\bibliographystyle{abbrv} \bibliography{citation}

\end{document}